\newtheorem{thm}{Theorem}
\newtheorem{introthm}{Theorem}
\newtheorem{lem}[thm]{Lemma}
\newtheorem{cor}[thm]{Corollary}
\newtheorem{prop}[thm]{Proposition}
\theoremstyle{definition}
\newtheorem{defn}[thm]{Definition}
\newtheorem{rem}[thm]{Remark}
\numberwithin{thm}{section}
\newfont{\cyrr}{wncyr10}
\def\Sh{\mbox{\cyrr Sh}}
\def\Z{\mathbf{Z}}
\def\Q{\mathbf{Q}}
\def\F{\mathbf{F}}
\def\R{\mathbf{R}}
\def\C{\mathbf{C}}
\def\Fp{\F_p}
\def\Ftwo{\F_2}
\def\bA{\mathbf{A}}
\def\Xset{\mathcal{C}}
\def\Xset{\mathcal{C}}
\def\A{\mathcal{A}}
\def\O{\mathcal{O}}
\def\cP{\mathcal{P}}
\def\cP{\mathcal{P}}
\def\H{\mathcal{H}}
\def\l{\mathfrak{q}}
\def\p{\mathfrak{p}}
\def\q{\mathfrak{q}}
\def\k{\Bbbk}
\def\Aut{\mathrm{Aut}}
\def\Hom{\mathrm{Hom}}
\def\Gal{\mathrm{Gal}}
\def\ord{\mathrm{ord}}
\def\Aut{\mathrm{Aut}}
\def\Sel{\mathrm{Sel}}
\def\End{\mathrm{End}}
\def\Frob{\mathrm{Frob}}
\def\ur{\mathrm{ur}}
\def\loc{\mathrm{loc}}
\def\dimtwo{\dim_{\Ftwo}}
\def\ram{\mathrm{ram}}
\def\Pic{\mathrm{Pic}}
\def\iK{\bA_K^\times}
\def\N{\mathbf{N}}
\def\too{\longrightarrow}
\def\dirsum#1{\underset{#1}{\textstyle\bigoplus}}
\title[Selmer Ranks of twists of hyperelliptic curves]
   {Selmer Ranks of twists of hyperelliptic \\ curves and superelliptic curves$^\star$}
\thanks{$^\star$This material is based upon work supported by the National Science Foundation under grant DMS-1065904.}
\author{Myungjun Yu}
\address{Department of Mathematics,
UC Irvine,
Irvine, CA 92697,
USA}
\email{\href{mailto:myungjuy@math.uci.edu}{myungjuy@math.uci.edu}}
\begin{document}

\begin{abstract}
We study the variation of Selmer ranks of Jacobians of twists of hyperelliptic curves and superelliptic curves. We find sufficient conditions for such curves to have infinitely many twists whose Jacobians have Selmer ranks equal to $r$, for any given nonnegative integer $r$. This generalizes earlier results of Mazur-Rubin on elliptic curves.

\end{abstract}

\maketitle

\section*{Introduction}
Let $E$ be an elliptic curve over $\Q$. Goldfeld's conjecture predicts that
$50\%$ of elliptic curves have rank $0$ and $50\%$ have rank $1$ in the family of quadratic twists of $E$. Let $E^\chi$ denote the quadratic twist of $E$ by a quadratic character $\chi$. We write $\Sel_2(E/\Q)$ and $\Sh_{E/\Q}$ to denote the $2$-Selmer group and the Shafarevich-Tate group of $E/\Q$, respectively. Then the following sequence is exact for all $\chi$:
$$
0 \to E^\chi(\Q)/2E^\chi(\Q) \to \Sel_2(E^\chi/\Q) \to \Sh_{E^\chi/\Q}[2] \to 0.
$$
Thereby we deduce that
$$
\dimtwo(\Sel_2(E^\chi/Q)) = \mathrm{rk}(E^\chi(\Q)) + \dimtwo(E^\chi(\Q)[2]) + \dimtwo(\Sh_{E^\chi/\Q}[2]),
$$
where $\mathrm{rk}(E^\chi(\Q))$ is the Mordell-Weil rank (over $\Q$) of $E^\chi$.
If we assume the Shafarevich-Tate Conjecture along with Goldfeld's conjecture, so $\dimtwo(\Sh_{E^\chi/\Q}[2])$ is even by Cassels' pairing, we have (varying $\chi$)
\begin{equation}
\label{e50percent}
\begin{aligned}
\bullet \text{ }50\%& \text{ of } E^\chi \text{ have even $2$-Selmer ranks and }\\
\bullet \text{ }50\%& \text{ of } E^\chi \text{ have odd $2$-Selmer ranks}
\end{aligned}
\end{equation}
in the family of quadratic twists of $E/\Q$, since $\dimtwo(E^\chi(\Q)[2])$ are the same for all $\chi$. This turns out to be true even if we assume neither of those big conjectures (see \cite[Corollary 7.10]{KMR}).

Let $C_{2,f}/\Q$ be a hyperelliptic curve whose affine model is
$$
y^2 = f(x),
$$
where $f$ is a (separable) polynomial defined over $\Q$, and $\deg(f)$ is odd. Then it is interesting to know if we can show that \eqref{e50percent} holds when $E$ is replaced by the Jacobian of $C_{2,f}$ (denoted by $J$). In other words, is it true that
\begin{equation}
\label{50percent}
\begin{aligned}
\bullet \text{ }50\%& \text{ of } J^\chi \text{ have even $2$-Selmer ranks, and }\\
\bullet \text{ }50\%& \text{ of } J^\chi \text{ have odd $2$-Selmer ranks}
\end{aligned}
\end{equation}
in the family of quadratic twists of $J/\Q$? It turns out to be true if $\deg(f) \equiv 3 \text{ (mod }4)$, and false if $\deg(f) \equiv 1 \text{ (mod }4)$ (see Corollary \ref{application}, Proposition \ref{heng}, and Proposition \ref{heng2}). It is even possible that $\dimtwo(\Sel_2(J^\chi/\Q))$ has constant parity for all $\chi$ when $\deg(f) \equiv 1 \text{ (mod }4)$. We display such examples in Section $8$. In fact, we prove (Theorem \ref{paritydensity}):
\begin{introthm}
\label{kmrkmr}
For all sufficiently large $X$,
$$
\frac{|\{\chi \in \Xset(K, X):d_2(\Sel_2(J^\chi/K)) \text{ is even }    \}|}{|\Xset(K, X)|} = \frac{1 + \delta}{2},
$$
where $\Xset(K,X)$ and $\delta$ are defined in Definition \ref{7.2} and Definition \ref{7.1}, respectively.
\end{introthm}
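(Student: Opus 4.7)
The plan is to reduce the parity of $d_2(\Sel_2(J^\chi/K))$ to a sum of purely local invariants depending on the components $\chi_v$ of $\chi$, and then compute the density by averaging this decomposition over $\chi \in \Xset(K,X)$. This is the structural approach \cite{KMR} use for elliptic curves, adapted here to the Jacobian $J$ of the hyperelliptic or superelliptic curve under consideration.

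Concretely, the first step is to invoke the local parity formula developed in the earlier sections, which should take the form
\[
d_2(\Sel_2(J^\chi/K)) \equiv \sum_v \delta_v(\chi) \pmod 2,
\]
where the sum runs over the places of $K$, each $\delta_v(\chi) \in \F_2$ depends only on $\chi_v$, and $\delta_v(\chi) = 0$ whenever $\chi$ is unramified at $v$ and $v$ lies outside a fixed finite set $S$ of ``bad'' places (the archimedean places, the places above $2$, and the places of bad reduction of $J$). Such a decomposition is obtained by comparing $\Sel_2(J/K)$ with $\Sel_2(J^\chi/K)$ via their defining local conditions and tracking the defect place by place. With this in hand, I would stratify $\Xset(K,X)$ by (a) the local behaviour of $\chi$ at each $v \in S$, of which there are finitely many possibilities, and (b) the set of primes outside $S$ at which $\chi$ ramifies, and then sum $(-1)^{\sum_v \delta_v(\chi)}$ over $\chi$. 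For $v \notin S$ the invariant $\delta_v(\chi)$ vanishes unless $\chi$ ramifies at $v$, and as $\chi$ varies through $\Xset(K,X)$ the values $\delta_v(\chi)$ at such places become equidistributed, producing cancellation. What remains is a finite contribution from $v \in S$, which assembles into the constant $\delta$ of Definition \ref{7.1} and yields the density $(1+\delta)/2$.

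The main obstacle is ensuring the local parity decomposition is uniform enough in $\chi$ to allow the averaging: one must check that the same finite set $S$ works for all $\chi \in \Xset(K,X)$, and that for $v \notin S$ at which $\chi$ ramifies the map $\chi_v \mapsto \delta_v(\chi)$ is a sufficiently simple local symbol (Hilbert-like) to sum to zero over the possible ramified local characters at $v$. The fact that $\delta$ is in general nonzero, as forced by the $\deg f \equiv 1 \pmod 4$ examples of Section 8, means the count must faithfully track the residual bias from $S$ rather than producing clean $50$-$50$ cancellation; pinning down this bias explicitly, and showing that the error terms in the combinatorial count decay as $X \to \infty$, is the technical heart of the argument.
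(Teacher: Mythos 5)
Your overall architecture (reduce the parity to a sum of local terms, then average) is the right one and matches the paper's, and invoking the Kramer-type parity formula (Theorem \ref{parity}) as the input is fine. But the mechanism you propose for the places outside the bad set $S=\Sigma$ is wrong, and it is exactly there that the disparity constant is created. At a prime $\l\notin\Sigma$ with $\l\in\cP_i$, the two ramified quadratic characters give the \emph{same} contribution $h_\l(\chi_\l)=d_2(J(K_\l)[2])=i$ (Lemma \ref{ramd}), so $(-1)^{h_\l(\chi_\l)}=(-1)^i$ does not sum to zero over the ramified local characters at $\l$; there is no local cancellation. More seriously, the total outside-$\Sigma$ contribution, $(-1)^{\#\{\l \mid \theta\,:\,\l\in\cP_i,\ i\ \mathrm{odd}\}}$ where $\theta$ is the set of ramified primes of $\chi$ outside $\Sigma$, is not an independent quantity that equidistributes: by Lemma \ref{deltaf} it equals $\prod_{\l\notin\Sigma}\chi_\l(\Delta_f)$, and the product formula $\prod_v\chi_v(\Delta_f)=1$ shows it is completely determined by the restrictions $(\chi_v)_{v\in\Sigma}$. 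If you instead let it wash out by equidistribution, you would predict density $1/2$ whenever $\Gal(f)\cong S_n$ (odd permutations have density $1/2$), contradicting the theorem when $\delta\neq0$, and you would in any case compute the wrong constant: the local factors $\omega_v(\chi_v)=(-1)^{h_v(\chi_v)}\chi_v(\Delta_f)$ of Definition \ref{7.1} carry the $\chi_v(\Delta_f)$ twist precisely to absorb the outside-$\Sigma$ contribution via reciprocity.

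The paper's reduction (Proposition \ref{congruence}) is therefore exact, not asymptotic: $r(\chi)\equiv r(1_K)\pmod 2$ if and only if $\prod_{v\in\Sigma}\omega_v(\chi_v)=1$, a statement about the finitely many components $\chi_v$, $v\in\Sigma$, alone. Only after this reduction does equidistribution enter, namely for the restriction map $\Xset(K,X)\to\prod_{v\in\Sigma}\Xset(K_v)$ (this is the content of the cited argument of \cite[Theorem 7.6]{KMR}), which gives that the average of $\prod_{v\in\Sigma}\omega_v(\chi_v)$ over $\Xset(K,X)$ is $\prod_{v\in\Sigma}\delta_v$. You should replace your outside-$S$ cancellation step with this reciprocity argument.
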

This theorem generalizes an earlier result of Klagsbrun, Mazur, and Rubin on elliptic curves (\cite[Theorem 7.6]{KMR}) to Jacobians of hyperelliptic curves.

Aside from the parity distribution, two questions we could ask for the family of quadratic twists of $J$ over a number field $K$ are, ``how many integers does the set $A_J:=\{\dimtwo(\Sel_2(J^\chi/K))|\chi \text{ is a quadratic character}\}$ cover," and, ``under what conditions does $A_J = \Z_{\ge0}$ happen?" This question was inspired by Mazur-Rubin's work (\cite[Theorem 1.6]{hilbert}) on elliptic curves as follows:
\begin{introthm}\cite[Theorem 1.6]{hilbert}
\label{thm1}
Suppose that $K$ is a number field, and $E$ is an elliptic curve over $K$
such that $\Gal(K(E[2])/K) \cong S_3$.  Let $\Delta_E$ be the discriminant
of some model of $E$, and suppose further that $K$ has a place $v_0$ satisfying
one of the following conditions:
\begin{itemize}
\item
$v_0$ is real and $(\Delta_E)_{v_0} < 0$,
or
\item
$v_0 \nmid 2\infty$, $E$ has multiplicative reduction at $v_0$, and
$\ord_{v_0}(\Delta_E)$ is odd.
\end{itemize}
Then for every $r \ge 0$, there are infinitely many quadratic twists $E'/K$ of $E$ such that $\dim_{\Ftwo}(\Sel_2(E'/K)) = r$.
\end{introthm}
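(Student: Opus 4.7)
The plan is to construct, for each $r\ge 0$, infinitely many quadratic characters $\chi$ of $K$ with $\dimtwo(\Sel_2(E^\chi/K))=r$, by iteratively modifying a starting character and changing the $2$-Selmer rank in controlled increments. The engine is a comparison theorem: for a prime $\q$ of $K$ where $E$ has good reduction and $\Frob_\q$ acts on $E[2]$ as a transposition in $\Gal(K(E[2])/K)\cong S_3$, the local group $H^1(K_\q,E[2])$ has $\Ftwo$-dimension $2$ and splits as $\Hu(K_\q,E[2]) \oplus H^1_s(K_\q,E[2])$ with each summand of dimension one; twisting a character $\chi$ by a quadratic character ramified at $\q$ exchanges these two local Selmer conditions at $\q$. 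A Poitou--Tate duality computation then shows that the $2$-Selmer rank changes by $0$, $\pm 1$, or $\pm 2$ under such a twist, with the exact value determined by which global $2$-Selmer classes (and which dual classes, in the Pontryagin dual of the Selmer group) localize nontrivially at $\q$.

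I would then use the Chebotarev density theorem to produce primes $\q$ realizing each behavior. Given $\chi$ and a desired rank change of $\pm 1$, one picks a Selmer class $c$ (or dual class) and seeks $\q$ such that $\Frob_\q$ has the right conjugacy class in $\Gal(K(E[2])/K)$ \emph{and} the restriction of $c$ to $H^1(K_\q,E[2])$ lies in a specified line. Existence of such $\q$ rests on linear disjointness over $K$ of $K(E[2])$ and the extension cut out by a cocycle representing $c$, which follows from the irreducibility of $E[2]$ (guaranteed by the $S_3$ hypothesis) together with the vanishing of a relevant cohomology group. Iterating, one can drive the rank down from any starting $\chi_0$ to $0$ or $1$, and then climb back up in unit steps to any given $r$.

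The final ingredient is parity control, since the rank-changing moves above can preserve the parity of the Selmer rank. This is where the hypothesis on $v_0$ is essential: its conditions --- real place with $(\Delta_E)_{v_0}<0$, or finite place not dividing $2$ with multiplicative reduction and odd $\ord_{v_0}(\Delta_E)$ --- are precisely those under which the local contribution at $v_0$ to a parity formula for $\dimtwo(\Sel_2(E^\chi/K))$ is nontrivially sensitive to twisting by quadratic characters ramified at $v_0$, so that such a twist flips the global Selmer parity. Combined with the rank-changing moves, this yields every $r\ge 0$, infinitely often because Chebotarev supplies infinitely many admissible primes $\q$ at each step. The main obstacle is the joint Chebotarev step: simultaneously controlling the action of $\Frob_\q$ on $E[2]$ \emph{and} the local image of a given global cohomology class requires a careful linear-disjointness argument, and in the rank-\emph{increasing} direction (when $\Sel_2(E^\chi/K)$ is too small for existing classes to localize against) one must first produce auxiliary cohomology classes via a Kolyvagin-style construction to have something to pair with at $\q$.
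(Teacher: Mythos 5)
This statement is quoted from Mazur--Rubin \cite[Theorem 1.6]{hilbert}; the paper gives no proof of it, so I am comparing your sketch against the argument in the cited source and against the paper's generalization of it in Sections 3--7. Your overall architecture is the right one and matches both: change the rank in controlled steps by twisting at auxiliary primes $\q$ of good reduction with $\Frob_\q$ a transposition (so $d_2(\alpha_\q(1_\q))=1$ and the change is $0$ or $\pm1$ by the Poitou--Tate comparison of $\Sel_2^\q$ and $\Sel_{2,\q}$, as in Lemma \ref{ptd}), locate such $\q$ by Chebotarev using $H^1(S_3,E[2])=0$ and linear disjointness to prescribe $\loc_\q$ on the Selmer group (Lemma \ref{3.5}), and use $v_0$ for parity. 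However, two of your mechanisms are wrong as stated.

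First, the role of $v_0$. You say its conditions make ``the local contribution at $v_0$ to a parity formula \dots nontrivially sensitive to twisting,'' so that a twist ramified at $v_0$ flips the parity. In the Kramer formula $\dimtwo\Sel_2(E/K)-\dimtwo\Sel_2(E^\chi/K)\equiv\sum_v h_v(\chi_v)$, the term at a real $v_0$ with $(\Delta_E)_{v_0}<0$ is \emph{identically zero}: $E(\R)\cong\R/\Z$ is $2$-divisible, so $\alpha_{v_0}(\chi_{v_0})=\alpha_{v_0}(1_{v_0})=0$ and $h_{v_0}\equiv 0$. The parity flip comes entirely from $h_\q(\chi_\q)=1$ at the transposition prime $\q$ (Lemma \ref{ramd}). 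What $v_0$ actually supplies is the solution to a global existence problem you never address: class field theory forces $\prod_v\chi_v(\Delta_E)=1$, and since $\chi_\q(\Delta_E)=-1$ for $\chi_\q$ ramified at a transposition prime, a global character ramified at exactly one such $\q$ and locally trivial at all bad places need not exist (this is the content of Lemmas \ref{kernelofthemaptopzero}--\ref{rs} here, and is why the conclusion genuinely fails without $v_0$, cf.\ Section 8). The hypotheses on $v_0$ guarantee a local character with $\chi_{v_0}(\Delta_E)=-1$ but $h_{v_0}(\chi_{v_0})=0$, i.e.\ one that absorbs the reciprocity constraint without altering the local Selmer condition; equivalently, they make $\omega_{v_0}=(-1)^{h_{v_0}}\chi_{v_0}(\Delta_E)$ nonconstant. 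Second, the rank-increasing step requires no Kolyvagin-style construction of auxiliary classes: once $\loc_\q$ kills $\Sel_2(E^\chi/K)$, global duality gives $\dimtwo\Sel_2^\q=\dimtwo\Sel_2+1$ outright, so the needed class exists for free; the only remaining point is that $\mathrm{res}_\q(\Sel_2^\q)$ is a line distinct from $\alpha_\q(1_\q)$, hence equal to $\alpha_\q(\psi_\q)$ for a ramified $\psi_\q$. With these two corrections your outline becomes the standard proof.
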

 We study the generalization of Theorem \ref{thm1} to Jacobians of hyperelliptic and superelliptic curves in this paper. We write $C_{p,f}$ to denote the superelliptic curve (Definition \ref{superdef}), which is the smooth projective curve whose affine model is given by $y^p = f(x)$, where $f(x)$ is a separable polynomial of degree $\ge 2$. In particular, when $p=2$, we call $C_{2,f}$ a hyperelliptic curve. We denote the Jacobian of a (smooth projective) curve $C$ by $J(C)$. We prove (Corollary \ref{hec}):
\begin{introthm}
\label{assumption}
Suppose that $K$ is a number field and $f \in K[x]$ is a separable polynomial. Let $n = \deg(f)$ and suppose that $n \equiv 3 (\textrm{mod }4)$ and $\Gal(f) \cong S_n$ or $A_n$. Suppose further that $K$ has a real embedding. Then for every $r \geqq 0$, the Jacobian $J$ of $C_{2,f}$ has infinitely many quadratic twists $J^\chi$ such that   $\dim_{\Ftwo}(\Sel_2(J^\chi/K))$ = $r$.
\end{introthm}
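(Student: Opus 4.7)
The plan is to adapt the Mazur--Rubin strategy from \cite[Theorem 1.6]{hilbert} to the Jacobian $J$ of $C_{2,f}$, combining the parity-density statement (Theorem \ref{kmrkmr}, which is meaningful because $n \equiv 3 \pmod 4$) with an induction that produces twists of every non-negative rank. The hypothesis $\Gal(f) \in \{S_n, A_n\}$ identifies $\Gal(K(J[2])/K)$ with a suitably large permutation group on the roots of $f$, which is exactly what is needed for a Chebotarev-style construction of auxiliary ramified primes for the twisting character. The real embedding $v_0$ enters because the family of quadratic characters we twist by must include characters ramified at $v_0$, giving an extra handle on the local Selmer term at that place.

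The argument breaks into two parts. \emph{Base case:} I exhibit a quadratic twist $J^{\chi_0}$ with $\dim_{\F_2}\Sel_2(J^{\chi_0}/K) \in \{0,1\}$ of each of the two parities. Since $n \equiv 3 \pmod 4$, Corollary \ref{application} gives positive density of twists of both parities, so the minimum $m$ of $\dim_{\F_2}\Sel_2(J^\chi/K)$ over $\chi$ is finite; combining it with the descent form of the induction below lets me take $m \le 1$ in each parity class. \emph{Inductive step:} given any twist $J^\chi$ with $\dim_{\F_2}\Sel_2(J^\chi/K) = s$, I produce a quadratic character $\eta$, ramified only at $v_0$ and at a single auxiliary prime $\q$, such that $\dim_{\F_2}\Sel_2(J^{\chi\eta}/K) = s+1$. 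The prime $\q$ is chosen by Chebotarev applied to a finite extension containing $K(J[2])$, arranged so that $\Frob_\q$ pairs nontrivially (under the local Kummer pairing) with a prescribed class in $\Sel_2(J^\chi/K)^\vee$; the twist by $\eta$ then changes the local Selmer conditions only at $\{v_0,\q\}$, and the two contributions combine to shift the global $\F_2$-dimension by exactly $+1$. Since for each step there are infinitely many admissible $\q$ by Chebotarev density, the iteration yields infinitely many twists of every rank $r\ge 0$.

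The main obstacle is the local analysis at $v_0$. For an elliptic curve, the $v_0$-term is controlled by the sign of the discriminant via $\dim_{\F_2} E(\R)/2E(\R)$; for the Jacobian $J$ of $y^2=f(x)$ the analogous invariant depends on the number of real roots of $f$ and of its quadratic twists $cf$. The task is to verify that the twist $f \leadsto cf$ modifies the local Selmer term at $v_0$ in precisely the way required for the parity accounting (which is exactly where $n\equiv 3 \pmod 4$ becomes essential, mirroring the failure for $n\equiv 1\pmod 4$ noted in the introduction) and that the bilinear Cassels-type pairing linking $v_0$ and $\q$ behaves as in the elliptic case. Once this local computation and its pairing with $\Frob_\q$ are in place, the global descent-and-ascent walk among Selmer ranks proceeds essentially as in \cite{hilbert}.
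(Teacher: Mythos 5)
Your overall architecture (parity density from Theorem \ref{kmrkmr}/Corollary \ref{application} plus a rank-changing induction) matches the paper's, but your inductive step is the wrong one and contains a genuine gap. The paper never performs a $\pm 1$ step; it proves only $\pm 2$ steps (Propositions \ref{localtwistdecreasebytwo} and \ref{localtwistincreasebytwo}, giving Theorem \ref{selmerrankpm2}), twisting at a single prime $\ell\in\cP_2$ where $\chi_\ell(\Delta_f)=1$ automatically, and then uses the density theorem to guarantee that \emph{both} parities actually occur. Your proposed $+1$ step fails for two concrete reasons. First, a global character ramified at exactly one auxiliary prime $\q$ (and with trivial local conditions at $\Sigma$) can only shift the parity if $\dimtwo J(K_\q)[2]$ is odd, i.e.\ $\q\in\cP_i$ with $i$ odd; but for $\Gal(f)\cong S_n$ such a character does not exist, because $\chi_\q(\Delta_f)=-1$ for ramified $\chi_\q$ at such $\q$ (Lemma \ref{deltaf}(ii)) while the class-field-theoretic obstruction of Lemma \ref{kernelofthemaptopzero} forces $\prod_v\chi_v(\Delta_f)=1$ with all other factors equal to $1$. (For $\Gal(f)\cong A_n$ with $n$ odd, $\cP_1$ is empty: a permutation with two orbits of lengths $a+b=n$ odd has sign $(-1)^{n-2}=-1$.) This is exactly why you are forced to also ramify at $v_0$, as you note.

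Second, once you ramify at $v_0$ as well, the local condition changes at \emph{two} places, and you lose control of the size of the jump: by Lemma \ref{reallocal}, $h_{v_0}(\eta)=k_1-1$ where $f$ has $2k_1-1$ real roots, so the total defect $h_{v_0}+h_\q=k_1$ can be arbitrarily large, and the rank change is only constrained to be odd and bounded by $k_1$ in absolute value. Chebotarev applied to $\q$ gives no handle on the archimedean contribution, and you have not supplied the (much more delicate) Lagrangian analysis in $H^1(K_{v_0},J[2])\oplus H^1(K_\q,J[2])$ that would be needed to pin the change to exactly $+1$. Your base case is also circular as written, since the ``descent form of the induction'' it invokes is this same unproven step. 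To repair the argument along the paper's lines: prove the $\pm2$ steps at $\ell\in\cP_2$ (using Lemma \ref{numberoflaglangian} to find a ramified $\psi_\ell$ with $\alpha_\ell(\psi_\ell)=\mathrm{res}_\ell(\Sel_2^\ell(J/K))$, and Lemma \ref{rs} to globalize), and obtain both parities from Proposition \ref{delta0} ($\delta_{v_0}=0$ when $n\equiv 3\pmod 4$) together with Theorem \ref{paritydensity}, rather than from any $\pm1$ twisting step.
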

Our technique relies on the fact that there is a canonical $G_{K}$-module isomorphism between $J(C_{2,f})[2]$ and $J(C_{2,df})[2]$, so that we can compare two Selmer groups
\begin{align*}
\Sel_2(J(C_{2,f})/K) &\subset H^1(K, J(C_{2,f})[2]) \text{ and }  \\
\Sel_2(J(C_{2, df})/K) &\subset H^1(K, J(C_{2, df})[2]) \cong H^1(K, J(C_{2,f})[2])
\end{align*}
in $H^1(K, J(C_{2,f})[2])$.
We work more generally with $\pi$-Selmer groups ($\pi := 1-\zeta_p$, where $\zeta_p$ is a primitive $p$-th root of unity) of superelliptic curves $C_{p,f}$ for arbitrary prime numbers $p$. Note that if $p=2$, then $\pi = 1-\zeta_2=2$, so $\pi$-Selmer group (which will be defined in Section 3) is actually a $2$-Selmer group of a hyperelliptic curve $C_{2,f}$ in this case. As in the hyperelliptic curve case, it is essential to compare $\Sel_\pi(J(C_{p,f})/K)$ and $\Sel_\pi(J(C_{p,df})/K)$ in $H^1(K, J(C_{p,f})[\pi])$, which is possible because there is a $G_{K}$-module isomorphism between $J(C_{p,f})[\pi]$ and $J(C_{p,df})[\pi]$ as will be seen in Proposition \ref{canonicaliso}.
We get a similar result when $p \geqq 3$, as follows (Theorem \ref{sec}).
\begin{introthm}
\label{assumption2}
Suppose that $K$ is a number field containing $\zeta_p$, and $f \in K[x]$ is a separable polynomial. Let $n = \deg(f)$ and suppose that $p \nmid n$ is an odd prime and $\Gal(f) \cong S_n$. Then for every $r$ $\geqq 0$, the superelliptic curve $C_{p, f}$ has infinitely many $p$-twists $C_{p, df}$ where $d \in K^\times/(K^\times)^p$ such that $\dim_{\Fp}(\Sel_{\pi}(J(C_{p,df})/K))$ = $r$.
\end{introthm}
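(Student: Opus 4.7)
The plan is to follow the twisting strategy of Mazur--Rubin (\cite{hilbert}), adapted to the $\pi$-Selmer setting developed in the earlier sections of the paper. The key input is the $G_K$-module isomorphism $J(C_{p,f})[\pi] \cong J(C_{p,df})[\pi]$ from Proposition \ref{canonicaliso}, which realises both $\Sel_\pi(J(C_{p,f})/K)$ and $\Sel_\pi(J(C_{p,df})/K)$ as $\F_p$-subspaces of a common cohomology group $H^1(K, J(C_{p,f})[\pi])$, cut out by different systems of local conditions indexed by the places of $K$.

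First I would analyse, place by place, how the local condition changes when $f$ is replaced by $df$: at places where $d$ is a local $p$-th power the two conditions coincide, while elsewhere the discrepancy is controlled by the local Kummer theory of $C_{p,df}$. The target is a prime switching lemma in the spirit of \cite{KMR} and \cite{hilbert}: starting from a twist $C_{p,df}$ with $\dimp(\Sel_\pi(J(C_{p,df})/K)) = r_0$, produce a prime $\q$ of $K$, unramified in $K(J(C_{p,f})[\pi])/K$ and disjoint from $p$ and the bad reduction locus, such that multiplying $d$ by a uniformiser-like element of $\q$ changes the $\pi$-Selmer rank by exactly $\pm 1$. Concretely, one uses Poitou--Tate global duality to exhibit a class in $H^1(K, J(C_{p,f})[\pi])$ whose local profile differs from the Selmer condition only at $\q$; balancing whether this class enters or leaves the twisted Selmer group yields the unit change in rank.

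To exhibit infinitely many usable primes $\q$, I would invoke Chebotarev applied to the Galois extension $K(J(C_{p,f})[\pi])/K$. The hypothesis $\Gal(f) \cong S_n$ with $p \nmid n$ enters precisely here: it forces $J(C_{p,f})[\pi]$ to be a sufficiently large absolutely irreducible $\F_p[G_K]$-representation so that a positive density of primes have $\Frob_\q$ acting with the eigenvalue pattern required by the local computation. Iterating the prime switching lemma, starting from any fixed base twist whose $\pi$-Selmer rank is some finite $r_0$, one can raise or lower the rank one step at a time until the desired $r \geq 0$ is reached, and positive density at each step then provides infinitely many $d \in K^\times/(K^\times)^p$ realising every target rank.

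The main obstacle will be the rank-lowering direction. Raising the Selmer rank by imposing a new local condition is comparatively standard, but decreasing it requires the local condition of $C_{p,df}$ at $\q$ to be transverse to that of $C_{p,f}$ inside $H^1(K_\q, J(C_{p,f})[\pi])$, so that an existing Selmer class is destroyed while no new one appears. Securing this transversality through adjustments at a single prime is where the large-image hypothesis $\Gal(f) \cong S_n$ is used in an essential way, together with $p \nmid n$ to rule out spurious $G_K$-fixed vectors in $J(C_{p,f})[\pi]$; a supplementary argument exploiting $\zeta_p \in K$ will then be needed to handle the places above $p$ and the archimedean places, where the available local flexibility is more restricted.
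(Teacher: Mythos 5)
Your proposal follows essentially the same route as the paper: identify $\Sel_\pi(J/K)$ and $\Sel_\pi(J^\chi/K)$ inside $H^1(K,J[\pi])$ via Proposition \ref{canonicaliso}, use Chebotarev in $K(J[\pi])/K$ (where $\Gal(f)\cong S_n$ and $p\nmid n$ give the needed irreducibility, vanishing of $H^1$, and linear disjointness) to find auxiliary primes $\q$ where the localization map has prescribed image, and then apply global duality (Lemma \ref{ptd}) together with the transversality of ramified local conditions (Lemma \ref{ramd}) to change the rank by exactly $\pm 1$ and iterate; this is precisely Propositions \ref{superelliptic}, \ref{amazingprime}, and \ref{hoohoo}. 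The only cosmetic difference is one of emphasis: in the paper the delicate step is the rank-raising direction (matching $\mathrm{res}_\ell(\Sel_\pi^\ell(J/K))$ with one of the $p+1$ ramified local conditions and then globalizing the character via class field theory), while the places in $\Sigma$, including those above $p$ and the archimedean ones, are handled simply by arranging the twisting character to be locally trivial there.
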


In the elliptic curve case, Kramer showed that (see \cite[Theorem 1]{kramer} and \cite[Theorem 2.8]{hilbert}) there is a (parity) relation between two Selmer groups $\Sel_2(E/K)$ and $\Sel_2(E^\chi/K)$ as follows:
$$
\mathrm{dim}_{\Ftwo}(\Sel_2(E/K)) - \mathrm{dim}_{\Ftwo}(\Sel_2(E^\chi/K)) \equiv
\displaystyle{\sum_{v}}{h_v(\chi_v)} (\textrm{mod } 2),
$$
where $\chi_v$ is the restriction of $\chi$ to $G_{K_v}$, and $h_v(\chi_v)$ is defined locally for every place $v$ (Definition \ref{hv}). In this paper, with the aid of \cite[Theorem 3.9]{KMR} we extend this result to Jacobians of hyperelliptic curves (Theorem \ref{parity}). In fact, this generalization plays an important role in proving Theorem \ref{kmrkmr}.

We briefly sketch the main idea of the proofs of Theorem \ref{assumption} and Theorem \ref{assumption2} here. Since a Selmer group is determined by its local conditions, if we find an element $d \in K^\times/(K^\times)^p$ such that local conditions defining $\Sel_\pi(J(C_{p,f})/K)$ and $\Sel_\pi(J(C_{p, df})/K)$ are the same everywhere except at only one place $v$, then we may expect $\Sel_\pi(J(C_{p,f})/K)$ and $\Sel_\pi(J(C_{p, df})/K)$ are ``almost" the same. By choosing an appropriate $d$, we increase or decrease the $\pi$-Selmer ranks by $1$ under the assumptions of Theorem \ref{assumption2}. But in the hyperelliptic curve case, it turns out it is difficult to increase or decrease $2$-Selmer ranks by $1$ by twisting only with this idea. But under sufficiently good conditions (see Theorem \ref{selmerrankpm2}), it is possible to increase or decrease $2$-Selmer ranks by $2$ by twisting. To prove this, we use the structure of metabolic spaces and the quadratic forms arising from the Heisenberg groups. Then with the aid of Theorem \ref{kmrkmr}, we prove Theorem \ref{assumption}.

The layout of the paper is as follows. In Section 1, we define a $p$-twist $J^\chi$ of the Jacobian $J$ of a superelliptic curve $C_{p,f}$. In Section 2, we assume $C_{p,f}$ is defined over a local field $K_v$. We compare local conditions (the Kummer images arising from $\pi: J \to J$, and $\pi: J^\chi \to J^\chi$), which are in fact the defining local conditions of $\pi$-Selmer groups of $J$ and $J^\chi$. In Section 3, we define various Selmer groups. In Section 4, Theorem \ref{assumption2} will be proved. Section 5 defines the quadratic form arising from Heisenberg group that we will need in the sequel. Section 6 and Section 7 will be devoted to prove Theorem \ref{kmrkmr} and Theorem \ref{assumption}, which shows that \eqref{50percent} is true if $\deg(f) \equiv 3 \text{ }(\text{mod }4)$. Section 8 contains an explicit example of the Jacobian of $C_{2,f}$ ($\deg(f) \equiv 1 \text{ }(\text{mod }4))$ , whose quadratic twists have constant 2-Selmer rank parity, so \eqref{50percent} is not satisfied.


\section{Superelliptic curves and twists}
In this section, we fix a prime $p$ and a field $L$ containing $\zeta_p$, where $\zeta_p$ is a primitive $p$-th root of unity. We assume that $L$ has characteristic $0$. Consider a plane curve
$$
S : y^p = f(x),
$$
where $f$ is a separable polynomial over $L$. If $p\nmid \mathrm{deg}(f)$, then $S$ is normalized to a smooth projective curve, with one rational point at infinity that we will denote by $\infty$. See \cite[\S1]{normalization} for the justification. Note that this holds for all $f$ such that $p\nmid \deg(f)$ although \cite{normalization} treats only the case when $\deg(f) > p$ for another reason.

\begin{defn}
\label{superdef}
A superelliptic curve $C_{p,f}$ over $L$ is a smooth projective curve in the projective space $\mathbb{P}^2$ whose affine model is
$$y^p = f(x),$$
where $f$ is a separable polynomial (not necessarily monic) defined over $K$ such that $p \nmid \deg(f)$. When $p=2$, we call it a hyperelliptic curve.
\end{defn}
Suppose that $\alpha_1, \alpha_2, \cdots \alpha_n$ are the roots of $f$, where $n$ denotes the degree of $f$.
For every smooth projective curve $C$, let $J(C)$ denote the Jacobian of $C$. Then $J(C_{p,f})$ is generated by divisor classes of the form $[P - \infty],$ where $P$ is a point of $y^p = f(x)$.

Fix a primitive $p$-th root of unity $\zeta_p$, and let $\pi:= 1-\zeta_p$. Then we have $\pi^{p-1} = pu$ for a unit $u \in \Z$[$\zeta_p$].  We often write $J$ instead of $J(C_{p,f})$ when it is obvious in the context.

\begin{rem}
\label{action}
Let $C_{p,f}$ be a superelliptic curve definied over $L$ of characteristic $0$. Note that $J(C_{p,f})$ has a natural $\Z[\zeta_p]$-action induced by $\zeta_p(\alpha, \beta) = (\alpha, \zeta_p\beta),$ where $(\alpha, \beta)$ is a point of $y^p = f(x)$. In other words, there is a natural map
$$
\boldsymbol{\mu}_p \to \mathrm{Aut}(J(C_{p,f})),
$$
where $\boldsymbol{\mu}_p$ is the multiplicative group of $p$-th roots of unity. Let $\alpha_1,  \alpha_2, \cdots, \alpha_n$ be the roots of $f(x)$. Then
$$
[(\alpha_1,0)-\infty], [(\alpha_2,0)-\infty], \cdots, [(\alpha_{n-1},0)-\infty]
$$
form a basis of $J[\pi]$ by \cite[Proposition 3.2]{pi-torsion}, where $J[\pi]$ denotes the $\Fp$-vector space of the $\pi$-torsion points of $J(C_{p,f})$. Note that
$$
[(\alpha_n,0)-\infty] = -[(\alpha_1,0)-\infty] - [(\alpha_2,0)-\infty] - \cdots - [(\alpha_{n-1},0)-\infty].
$$
\end{rem}

\begin{rem}
\label{snaction}
The symmetric group $S_n$ naturally acts on the set of the roots of $f$. This induces an action of $S_n$ on $J[\pi]$. Note that $\Gal(f) \subset S_n$ acts on $J[\pi]$ naturally too.
\end{rem}

\begin{defn}
Let $L$ be a field of characteristic $0$, and $\zeta_p \in L$ . We write
$$
\Xset^p(L) := \Hom(G_L, \boldsymbol{\mu}_p).
$$
If $L$ is a local field, we often identify $\Xset^p(L)$ with $\Hom(L^\times, \boldsymbol{\mu}_p)$ via the local reciprocity map, and let $\Xset^p_\ram(L) \subset \Xset^p(L)$ be the subgroup of ramified characters in $\Xset^p(L)$. Then $\chi \in \Xset^p_\ram(L)$ if and only if $\chi(\O_L^\times) \neq 1$, where $\O_L^\times$ is the unit group of the integer ring of $L$, by local class field theory.
\end{defn}

\begin{defn}
\label{definitionofquadratictwist}
For $\chi \in \Xset^p(L)$, we say that $J^\chi$ is the $p$-twist of $J$ by $\chi$ if $J^\chi$ is a superelliptic curve whose affine model is
$$
y^2 = d^{-1}f(x),
$$
where $d$ is the preimage of $\chi$ in the Kummer map
$$
L^\times/(L^\times)^p \cong \Hom(G_L, \boldsymbol{\mu}_p);
$$
i.e., $J^\chi = J(C_{p, d^{-1}f})$.
\end{defn}

\begin{rem}
\label{caniso}
For any superelliptic curve $C_{p,f}$ defined over $K$ of characteristic $0$, an isomorphism over $\overline{K}$ between two affine curves
$$
\{y^p = d^{-1}f(x)\} \cong \{y^p = f(x)\}
$$
is given by sending $(a, b)$ to $(a, \sqrt[p]{d}b)$ where $\sqrt[p]{d}$ is a choice of $p$-th root of $d$. The following proposition is an immediate consequence of Remark \ref{action}.
\end{rem}

\begin{prop}
\label{canonicaliso}
Suppose that a superelliptic curve $C_{p,f}$ is defined over a field $L$ of characteristic $0$, and $\chi \in \Xset^p(L)$. Then
$$
J[\pi] \cong J^\chi[\pi].
$$
In particular, $J(L)[\pi] \cong J^\chi(L)[\pi]$.
\end{prop}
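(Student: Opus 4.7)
The plan is to upgrade the $\overline{L}$-isomorphism of Remark \ref{caniso} to a $G_L$-equivariant isomorphism after restriction to $\pi$-torsion. Fix a $p$-th root $\sqrt[p]{d}$ of the Kummer representative $d$, and let $\phi\colon C_{p,d^{-1}f}\to C_{p,f}$ be the morphism $(a,b)\mapsto (a,\sqrt[p]{d}\,b)$ sending $\infty'\mapsto\infty$. This is an isomorphism of curves over $\overline{L}$, and hence induces an isomorphism $\phi_*\colon J^\chi \isom J$ of abelian varieties over $\overline{L}$.

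The first step is the cocycle calculation. For $\sigma\in G_L$, Kummer theory gives $\sigma(\sqrt[p]{d})=\chi(\sigma)\sqrt[p]{d}$ with $\chi(\sigma)\in\bmu_p$, so a direct computation shows
\[
\sigma\circ\phi\circ\sigma^{-1}(a,b)=(a,\chi(\sigma)\sqrt[p]{d}\,b)=\chi(\sigma)\cdot \phi(a,b),
\]
where $\bmu_p$ acts on the second coordinate as in Remark \ref{action}. Thus $\phi_*$ fails to be $G_L$-equivariant only by the $\bmu_p$-action on $J$ via the natural map $\bmu_p\to\Aut(J)$.

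The second step is to observe that this $\bmu_p$-action is trivial on $J[\pi]$. Indeed, the endomorphism $\pi=1-\zeta_p$ acts as $0$ on $J[\pi]$ by definition, so $\zeta_p$ acts as the identity on $J[\pi]$ and the cocycle twist disappears. Consequently the restriction $\phi_*|_{J^\chi[\pi]}\colon J^\chi[\pi]\to J[\pi]$ is $G_L$-equivariant, and taking $G_L$-invariants yields the statement for $L$-rational $\pi$-torsion.

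As a sanity check, one can note that the map is transparent on the explicit basis of Remark \ref{action}: both $J^\chi[\pi]$ and $J[\pi]$ are spanned by the classes $[(\alpha_i,0)-\infty]$ attached to the common root set of $f$ and $d^{-1}f$, and since $\sqrt[p]{d}\cdot 0=0$, the map $\phi_*$ simply matches these bases, with the natural $G_L$-action on $\{\alpha_i\}$ agreeing on both sides. The only genuine subtlety is the cocycle computation above, which is the one place where the hypothesis $p\nmid\deg f$ (implicit in the $\pi$-torsion description) and the choice of $\pi$ truly enter.
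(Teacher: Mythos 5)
Your proof is correct. The paper itself gives no argument beyond declaring the proposition ``an immediate consequence of Remark \ref{action}'' via the explicit isomorphism of Remark \ref{caniso} --- i.e.\ it relies on the fact that $\phi$ fixes each Weierstrass class $[(\alpha_i,0)-\infty]$ and that these classes span $J[\pi]$ (the cited \cite[Proposition 3.2]{pi-torsion}), with Galois permuting them identically on both sides. That is exactly your ``sanity check'' paragraph. Your main line of argument is genuinely different and, to my mind, cleaner: you compute the twisting cocycle $\sigma\mapsto\phi^\sigma\circ\phi^{-1}=[\chi(\sigma)]\in\bmu_p\subset\Aut(J)$ and then observe that $\bmu_p$ acts trivially on $J[\pi]$ precisely because $\pi=1-\zeta_p$ annihilates it. This buys you independence from the explicit basis description (the argument works for any abelian variety with $\Z[\zeta_p]$ in its endomorphisms, twisted by a $\bmu_p$-valued cocycle), and it isolates where $\pi$-torsion, as opposed to $p$-torsion, is essential. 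The only point you leave implicit is that $\phi_*$ carries $J^\chi[\pi]$ into $J[\pi]$ in the first place; this follows since $\phi$ visibly commutes with the $\bmu_p$-action and hence with $\pi$, and is in any case covered by your basis check.
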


\begin{rem}
Let $J/L$ be the Jacobian of a superelliptic curve $C_{p,f}$. We denote the set of twists of $J/L$ by $\mathrm{Twist}(J/L)$. It is well-known (for example, see Proposition 5 in \cite[Chapter3 \S1]{serre}) that there is a bijection  $$\mathrm{Twist}(J/L) \to H^1(G_L, \mathrm{Aut}(J)).$$
It maps $\phi : A' \to J$ to $\xi: G_L \to \mathrm{Aut}(J),$ where $\xi_\sigma = \phi^\sigma \circ \phi^{-1}$.

Then we have a composition of maps
\begin{equation}
\label{quadtwistmap}
\Hom(G_L, \boldsymbol{\mu}_p) \to H^1(G_L, \mathrm{Aut}(J)) \to \mathrm{Twist}(J/L),
\end{equation}
where the first map is given by the map $\boldsymbol{\mu}_p \to \mathrm{Aut}(J)$ in Remark \ref{action}. The last map is the bijection given above. A $p$-twist of $J$ by $\chi \in \Xset^p(K)$ is then, given by the image of $\chi$ in \eqref{quadtwistmap}.
\end{rem}

\section{Comparison of local conditions}
In this section, let $C_{p,f}$ denote a superelliptic curve defined over a local field $K_v$ containing the $p$-th roots of unity, where $n:= \deg(f)$, so that $\Gal(f) \subset S_n$. For the rest of the paper, a local field is either an archimedean field, or a finite extension of $\Q_{\ell}$ for some prime number $\ell$. If $K_v$ is an archimedean field, we write $v|\infty$, and if the residue characteristic of $K_v$ is $p$, we write $v|p$. Let $1_v$ denote the trivial character in $\Xset^p(K_v)$.
We denote the Jacobian of $C_{p,f}$ simply by $J$. The local conditions in Definition \ref{hv} are in fact those of the $\pi$-Selmer group (to be defined later) of the Jacobian of a superelliptic curve defined over a number field. In this section, we list several lemmas to be used in the sequel.

\begin{defn}
Let $V$ be a finite dimensional vector space over $\Fp$. We write $d_p(V)$ for the dimension of $V$ over $\Fp$.
\end{defn}

\begin{defn}
\label{hv}
For $\chi \in \Xset^p(K_v)$, define
$$
\alpha_v(\chi) := \mathrm{Im}( J^{\chi}(K_v)/\pi J^{\chi}(K_v) \to H^1(K_v, J^{\chi}[\pi]) \cong H^1(K_v, J[\pi])),
$$
where the first map is given by the Kummer map. Define
$$
h_v(\chi):= d_p(\alpha_v(1_v)/(\alpha_v(1_v) \cap \alpha_v(\chi)).
$$
\end{defn}
The vector space $\alpha_v(\chi)$ and the invariant $h_v(\chi)$ certainly depend on $C_{p,f}$ and the field $K_v$ over which $C_{p,f}$ is defined, but we suppress them from the notation.

Let $\lambda : J \to \hat{J}$ be the canonical principal polarization. Then $J[\pi]$ is self-dual; i.e., $\lambda^{-1}(\hat{J}[\hat{\pi}]) = J[\pi]$, where $\hat{\pi}$ is the dual isogeny of $\pi$(see \cite[Proposition 3.1]{pi-torsion}). Let $\langle  \text{ , } \rangle_\pi$ denote the Cartier pairing (for example, see Section $1$ in \cite{cartier}) for $\pi : J \to J$ (multiplication by $\pi$).

\begin{defn}
\label{pairing}
Define a pairing
$$
e_\pi : J[\pi] \times J[\pi] \to \boldsymbol{\mu}_p
$$
by sending $(x,y)$ to $\langle  {x, \lambda(y)} \rangle_\pi$.
\end{defn}

\begin{rem}
By properties of the Cartier pairing, the pairing $e_\pi$ is bilinear, nondegenerate, and $G_K$-equivariant. In fact, $e_\pi$ can be defined more concretely as follows. If $e_p$ is the Weil pairing of $J[p]$ associated to the canonical principal polarization $\lambda$,
$$
e_\pi (a, b) := e_p (a, (\pi^{p-2})^{-1}(b)),
$$
which can be proved by using functoriality in \cite[Corollary 1.3(ii)]{cartier}.
If $p=2$, then $e_\pi$ is nothing but the Weil pairing of $J[2]$ associated to the canonical principal polarization.
\end{rem}
The following theorem follows from Tate's local duality.
\begin{thm}
\label{tlp}
Tate's local duality and the paring $e_\pi$ give a nondegenerate pairing
\begin{equation}
\label{tld}
\langle  \text{ , } \rangle_v : H^1(K_v, J[\pi]) \times H^1(K_v, J[\pi]) \too H^2(K_v, \zeta_p) = {\Fp}.
\end{equation}
If $p=2$, then the pairing is symmetric.
\end{thm}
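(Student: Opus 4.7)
The plan is to deduce this theorem directly from Tate's local duality by using $e_\pi$ to self-dualize $J[\pi]$. First, recall that Tate's local duality for a finite $G_{K_v}$-module $M$ of order prime to nothing (any finite module in characteristic zero) yields a perfect pairing
$$
H^1(K_v, M) \times H^1(K_v, M^\vee) \too H^2(K_v, \bmu_p) = \Fp,
$$
where $M^\vee = \Hom(M, \bmu_p)$ is the Cartier dual and the pairing is cup product composed with the evaluation map $M \otimes M^\vee \to \bmu_p$. I would cite this as a black box (e.g., Milne's ADT or Serre's \emph{Cohomologie Galoisienne}).

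Next I would show that $e_\pi$ identifies $J[\pi]$ with its own Cartier dual as $G_{K_v}$-modules. The remark following Definition \ref{pairing} already records that $e_\pi$ is bilinear, nondegenerate, and $G_{K_v}$-equivariant; concretely, one checks this by combining (a) the nondegeneracy and Galois-equivariance of the Cartier pairing $\ld\,,\rd_\pi$ between $J[\pi]$ and $\hat{J}[\hat\pi]$, and (b) the fact that the canonical principal polarization $\lambda: J \isom \hat{J}$ is defined over $K_v$ and therefore $G_{K_v}$-equivariant, so that $\lambda$ induces a $G_{K_v}$-equivariant isomorphism $J[\pi] \isom \hat{J}[\hat\pi]$. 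Thus the map $x \mapsto e_\pi(x, -)$ is a $G_{K_v}$-equivariant isomorphism $J[\pi] \isom J[\pi]^\vee$. Feeding this isomorphism into Tate's duality pairing gives the nondegenerate pairing $\ld\,,\rd_v$ claimed in \eqref{tld}.

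For the symmetry claim when $p=2$, I would argue in two steps. The Weil pairing $e_2$ on $J[2]$ associated to the canonical principal polarization is alternating, hence in particular satisfies $e_2(x,y) = e_2(y,x)$ (symmetry as an $\Ftwo$-valued pairing follows from $\bmu_2 \cong \Ftwo$ being $2$-torsion). On the other hand, the cup product on cohomology is graded-commutative: for $\alpha, \beta \in H^1(K_v, J[2])$ one has $\alpha \cup \beta = -\,\beta \cup \alpha$ as elements of $H^2(K_v, \bmu_2) = \Ftwo$, and the sign is trivial modulo $2$. Combining these two observations, the pairing is symmetric.

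The only point that requires any vigilance is the compatibility in step two: one must be sure that the identification $J[\pi]^\vee \cong J[\pi]$ is made using the $G_{K_v}$-equivariant isomorphism coming from $\lambda$ (not some auxiliary choice), so that the induced pairing on $H^1$ really is the cup-product of Tate's duality. This is the only mild obstacle; once the identification is pinned down, both nondegeneracy and (for $p=2$) symmetry are formal consequences of the cited properties of $e_\pi$ and the cup product.
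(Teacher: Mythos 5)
Your proposal is correct and follows essentially the same route as the paper: the paper simply cites Tate's local duality (via the self-duality of $J[\pi]$ furnished by $e_\pi$ and the polarization) and disposes of the $p=2$ symmetry by noting that both the cup product and the Weil pairing are alternating, so the two signs cancel. Your version merely spells out the identification $J[\pi]\cong J[\pi]^\vee$ and the sign bookkeeping in more detail.
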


 \begin{proof}
For example, see \cite[Theorem 7.2.6]{cohomology}. The last assertion holds because the pairings given by cup product and the Weil pairings are alternating.
 \end{proof}

\begin{lem}
\label{selfdual}
Let $C_{p,f}$ be a superelliptic curve over a local field $K_v$ containing $\zeta_p$. Then the image of the Kummer map
 $$
 J(K_v)/\pi J(K_v) \to H^1(K_v, J[\pi])
 $$
 is its own orthogonal complement under pairing \eqref{tld}.
\end{lem}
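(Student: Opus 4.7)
The plan is to establish $W = W^{\perp}$, where $W := \mathrm{Im}(J(K_v)/\pi J(K_v) \to H^1(K_v, J[\pi]))$, in the standard two pieces: an isotropy statement $W \subseteq W^{\perp}$ and a matching dimension count $\dimp W = \tfrac{1}{2}\dimp H^1(K_v, J[\pi])$. The essential bookkeeping comes from the Kummer long exact sequence
$$
0 \to J(K_v)/\pi J(K_v) \too H^1(K_v, J[\pi]) \to H^1(K_v, J)[\pi] \to 0,
$$
which both shows the Kummer map is injective (so $\dimp W = \dimp(J(K_v)/\pi J(K_v))$) and reduces the dimension count to the identity $\dimp H^1(K_v, J)[\pi] = \dimp(J(K_v)/\pi J(K_v))$.

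For isotropy I would invoke the standard theorem of Tate (e.g.\ Milne, \emph{Arithmetic Duality Theorems}, Thm.~I.3.2): for any isogeny $\phi:A\to B$ of abelian varieties over $K_v$, the Kummer images of $A(K_v)/\phi A(K_v) \hookto H^1(K_v, A[\phi])$ and $\hat B(K_v)/\hat\phi \hat B(K_v) \hookto H^1(K_v, \hat B[\hat\phi])$ are exact annihilators of one another under the pairing coming from the Weil pairing and Tate local duality. Applied with $\phi = \pi: J \to J$ and transported via the canonical polarization $\lambda:J\isom\hat J$, the dual isogeny $\hat\pi$ corresponds to the Rosati conjugate of $\pi \in \Z[\zeta_p] \subseteq \End(J)$ (the inclusion coming from Remark~\ref{action}), which equals $\bar\pi = 1-\zeta_p^{-1} = -\zeta_p^{-1}\pi$ (complex conjugation is the unique positive involution on $\Q(\zeta_p)$ and so must coincide with the Rosati involution on $\Z[\zeta_p]$). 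Since $-\zeta_p^{-1} \in \Z[\zeta_p]^{\times}$, we get $\hat\pi J(K_v) = \pi J(K_v)$, so the two Kummer images coincide under $\lambda$ and the annihilator statement reduces to $W \subseteq W^{\perp}$.

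For the dimension count, Tate local duality for the abelian variety $J$ supplies a perfect pairing $H^1(K_v, J) \times \hat J(K_v) \to \Q/\Z$; functoriality in $\pi$ converts this into an isomorphism $H^1(K_v, J)[\pi]^{\vee} \cong \hat J(K_v)/\hat\pi \hat J(K_v)$, which via $\lambda$ becomes $J(K_v)/\pi J(K_v)$ by the same unit argument. Hence $\dimp H^1(K_v, J)[\pi] = \dimp(J(K_v)/\pi J(K_v)) = \dimp W$, so the exact sequence above yields $\dimp W = \tfrac{1}{2}\dimp H^1(K_v, J[\pi])$; combined with $W \subseteq W^{\perp}$ and nondegeneracy of $\langle\,,\,\rangle_v$ from Theorem~\ref{tlp}, this forces $W = W^{\perp}$. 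I expect the main (and essentially only non-formal) obstacle to be the verification that the Rosati involution on $\Z[\zeta_p] \subseteq \End(J)$ is complex conjugation, which pins down $\hat\pi$ as a unit multiple of $\pi$; this is standard for Jacobians of cyclic covers and is already implicit in the self-duality statement $\lambda^{-1}(\hat J[\hat\pi]) = J[\pi]$ invoked from \cite{pi-torsion}.
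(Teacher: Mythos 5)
Your proof is correct, but it takes a genuinely different route from the paper's. You apply the isogeny form of Tate's local duality theorem directly to $\pi : J \to J$, which forces you to identify the dual isogeny: you must know that under the canonical principal polarization $\lambda$ the Rosati conjugate of $\pi$ is $\bar\pi = -\zeta_p^{-1}\pi$, a unit multiple of $\pi$, so that the Kummer image of $\hat J(K_v)/\hat\pi\hat J(K_v)$ pulls back under $\lambda_*$ to the Kummer image of $J(K_v)/\pi J(K_v)$ itself. That Rosati computation is the one non-formal input, and your justification (the Rosati involution sends the automorphism $\zeta_p$ of the Jacobian of a cyclic cover to $\zeta_p^{-1}$, i.e.\ restricts to complex conjugation on $\Z[\zeta_p]$) is the standard fact underlying the cited self-duality $\lambda^{-1}(\hat J[\hat\pi]) = J[\pi]$, so this is fine; your unit argument correctly shows the two Kummer images coincide. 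The paper instead takes as input only the classical self-duality of the Kummer image for multiplication by $p$ and transfers it to $\pi$ by a diagram chase through the maps $J[\pi]\hookrightarrow J[p]$ and $\pi^{p-2}\colon J[p]\to J[\pi]$, proving both inclusions $W\subseteq W^\perp$ and $W^\perp\subseteq W$ directly; this entirely sidesteps $\hat\pi$ and the Rosati involution at the cost of more homological bookkeeping. One small remark on economy: once you invoke the exact-annihilator form of Tate's theorem and match the two Kummer images, you already have $W = W^\perp$ outright, so your separate dimension count via the perfect pairing $H^1(K_v,J)\times\hat J(K_v)\to\Q/\Z$ is redundant (though correct). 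Also a typo: for $\phi\colon A\to B$ the second Kummer image should be $\hat A(K_v)/\hat\phi\hat B(K_v)$, not $\hat B(K_v)/\hat\phi\hat B(K_v)$; this is harmless here since $A=B=J$.
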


\begin{proof}
We prove it by applying the well-known fact that the image of the Kummer map
$$
J(K_v)/p J(K_v) \rightarrow H^1(K_v, J[p])
$$
is self-dual in Tate's local duality for $H^1(K_v, J[p])$ and diagram chasing. By applying long exact sequences to the following diagrams
$$
\xymatrix{
0 \ar[r] & J[\pi] \ar[r] \ar@{^{(}->}[d] & J \ar^\pi[r] \ar@{=}[d] & J  \ar[r] \ar^{p/\pi}[d]& 0 & 0 \ar[r] & J[\pi] \ar[r] & J \ar^\pi[r] & J \ar[r] &0\\
0 \ar[r] & J[p] \ar[r] & J \ar^p[r] & J \ar[r] & 0 & 0 \ar[r] & J[p] \ar_{\pi^{p-2}}[u] \ar[r] & J \ar^p[r] \ar_{\pi^{p-2}}[u] & J \ar[r] \ar_{u}[u] &0
}
$$
we get the following commutative diagrams with exact rows
$$
\xymatrix{
0 \ar[r] &  J(K_v)/\pi J(K_v) \ar^\phi[r] \ar^{p/\pi}[d]& H^1(K_v, J[\pi]) \ar^\lambda[r] \ar^s[d] & H^1(K_v, J)[\pi] \ar[r] \ar^{s'}[d]& 0 \\
0 \ar[r] & J(K_v)/p J(K_v) \ar^{\phi'}[r] &   H^1(K_v, J[p]) \ar^{\lambda'}[r] & H^1(K_v, J)[p] \ar[r] & 0
}
$$
$$
\xymatrix{
J(K_v)/\pi J(K_v) \ar^\phi[r] & H^1(K_v, J[\pi])  \\
J(K_v)/p J(K_v) \ar^{\phi'}[r] \ar_{u}[u] & H^1(K_v, J[p]), \ar_{\pi^{p-2}}[u]
}
$$
where $\phi$ and $\phi'$ are the Kummer maps and $s$, $s'$ are natural maps.
Let $\mathrm{Im} (\phi)$ denote the set of the image of the map $\phi$. We want to show that
$\mathrm{Im} (\phi) = \mathrm{Im}(\phi)^{\perp}$. First we show that  $\mathrm{Im} (\phi) \subset \mathrm{Im}(\phi)^\perp$; i.e., $\mathrm{Im} (\phi) \perp \mathrm{Im} (\phi)$.
We have $\mathrm{Im}(s \circ \phi) \subset \mathrm{Im} (\phi')$
and $\mathrm{Im}(\pi^{p-2} \circ \phi') = \mathrm{Im}(\phi \circ u) = \mathrm{Im} (\phi)$ since the map $u$ is surjective. By the fact that $\mathrm{Im}( \phi') \perp \mathrm{Im}(\phi')$, we obtain $s^{-1}(\mathrm{Im}(\phi')) \perp \mathrm{Im}(\pi^{p-2} \circ (\phi'))$ because the following diagram commutes
$$
\begin{tikzpicture}
\matrix(m)[matrix of math nodes,
row sep=2.6em, column sep=2.8em,
text height=1.5ex, text depth=0.25ex]
{H^1(K_v, J[\pi])&\times&H^1(K_v, J[\pi])&H^2(K_v, \boldsymbol{\mu}_p)\\
H^1(K_v, J[p])&\times&H^1(K_v, J[p])&H^2(K_v, \boldsymbol{\mu}_p).\\};
\path[->,font=\scriptsize,>=angle 90]
 (m-1-1) edge node[auto] {$s$} (m-2-1)
 (m-1-3) edge node[auto] {$\cup$} (m-1-4)
(m-2-3)  edge node[right] {$\pi^{p-2}$} (m-1-3)
(m-2-3) edge node[auto] {$\cup$} (m-2-4);
\draw[double distance = 2.5pt]
(m-1-4) -- (m-2-4);
\end{tikzpicture}
$$
Therefore $\mathrm{Im}(\phi) \perp \mathrm{Im} (\phi)$, or equivalently $\mathrm{Im}(\phi) \subset \mathrm{Im}(\phi)^\perp$. Next we show  $\mathrm{Im}(\phi)^\perp \subset \mathrm{Im} (\phi)$. Let $b \in H^1(K_v, J[\pi])$ be an orthogonal element to $\mathrm{Im}(\phi)$. Then
\begin{align*}
s(b) \perp (\pi^{p-2})^{-1} ( \mathrm{Im} \phi) & \Rightarrow s(b) \perp \mathrm{Im} (\phi') \\
& \Rightarrow s(b) \in \mathrm{Im} (\phi ') \\
& \Rightarrow s(b) \in \ker(\lambda ') \\
& \Rightarrow 0 = \lambda' \circ s(b) = s' \circ \lambda(b) \\
& \Rightarrow \lambda (b) = 0 \\
& \Rightarrow b \in \ker(\lambda) = \mathrm{Im}(\phi).
\end{align*}
Hence we are done.
\end{proof}

\begin{rem}
\label{twistselfdual}
By Lemma \ref{selfdual} applied to $J^\chi$, for $\chi \in \Xset^p(K_v)$, one can see that $\alpha_v(\chi)$ is its own orthogonal complement in \eqref{tld}. Since the pairing \eqref{tld} is non-degenerate, we have
$$
d_p(H^1(K_v, J[\pi])) = 2d_p(\alpha_v(\chi)).
$$
\end{rem}

\begin{lem}
\label{hardtofindref}
Suppose that $v \nmid p\infty$ and $J/K_v$ has good reduction. Then
$$
\alpha_v(1_v) \cong J[\pi]/(\Frob_v-1)J[\pi],
$$
where the isomorphism is given by evaluating cocycles at a Frobenius automorphism $\Frob_v$.
\end{lem}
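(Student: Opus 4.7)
The plan is to identify $\alpha_v(1_v)$ with the unramified cohomology $H^1_\ur(K_v, J[\pi])$ and then apply the standard description of $H^1$ of a procyclic group. Good reduction at $v$ together with $v \nmid p$ means $\pi$ is a unit in the residue field, so the Néron--Ogg--Shafarevich criterion implies that $J[\pi]$ is an unramified $G_{K_v}$-module. Consequently the action of $G_{K_v}$ on $J[\pi]$ factors through $G_{K_v}/I_v \cong \hat{\Z}$, topologically generated by $\Frob_v$. Since $J[\pi]$ is a finite discrete module for a procyclic group, evaluation of cocycles at $\Frob_v$ gives the standard isomorphism
$$
H^1_\ur(K_v, J[\pi]) := H^1(G_{K_v}/I_v, J[\pi]) \isom J[\pi]/(\Frob_v - 1)J[\pi],
$$
which is the target of the asserted isomorphism. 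It therefore suffices to prove $\alpha_v(1_v) = H^1_\ur(K_v, J[\pi])$.

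For the inclusion $\alpha_v(1_v) \subset H^1_\ur(K_v, J[\pi])$, I would show that $\pi : J(K_v^\ur) \to J(K_v^\ur)$ is surjective and use the commutative diagram of Kummer sequences over $K_v$ and over $K_v^\ur$: any class in $\alpha_v(1_v)$, when restricted to $I_v = G_{K_v^\ur}$, lies in the image of $J(K_v^\ur)/\pi J(K_v^\ur) = 0$ in $H^1(I_v, J[\pi])$. The surjectivity of $\pi$ on $J(K_v^\ur)$ uses the good reduction hypothesis in three steps: (i) smoothness and properness of the Néron model force the reduction map $J(K_v^\ur) \onto J(\overline{k_v})$ to be surjective, (ii) $\pi$ is surjective on the divisible group $J(\overline{k_v})$, and (iii) $\pi$ is a unit in $\O_{K_v^\ur}$ since $v \nmid p$, so it acts bijectively on the formal group points that form the kernel of reduction.

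Equality will then follow by a dimension count using the pairing \eqref{tld}. By Remark \ref{twistselfdual} (applied to the trivial character $\chi = 1_v$), $\alpha_v(1_v)$ is its own orthogonal complement in the Tate pairing, so $d_p(\alpha_v(1_v)) = \tfrac{1}{2}d_p(H^1(K_v, J[\pi]))$. The standard local duality fact that $H^1_\ur(K_v, J[\pi])$ is its own orthogonal complement in \eqref{tld} whenever $v \nmid p\infty$ and $J[\pi]$ is unramified and self-dual then gives the same dimension for $H^1_\ur$; combined with the inclusion above we conclude equality. The main obstacle I anticipate is verifying the three ingredients for surjectivity of $\pi$ on $J(K_v^\ur)$; each is standard, but all three rely on the hypotheses $v \nmid p\infty$ and good reduction in an essential way. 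Once surjectivity is in hand, the remainder is dimension bookkeeping via Tate duality.
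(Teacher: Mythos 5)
Your argument is correct, but it reaches the identification $\alpha_v(1_v) = H^1(K_v^{\ur}/K_v, J[\pi])$ by a different mechanism than the paper. The paper takes $\Gal(K_v^{\ur}/K_v)$-cohomology of the sequence $0 \to J[\pi] \to J(K_v^{\ur}) \xrightarrow{\pi} J(K_v^{\ur}) \to 0$ and invokes the vanishing $H^1(K_v^{\ur}/K_v, J(K_v^{\ur})) = 0$ (a consequence of Lang's theorem, cited from McCallum), which yields an isomorphism $J(K_v)/\pi J(K_v) \isom H^1(K_v^{\ur}/K_v, J[\pi])$ in one stroke; the final step, evaluation at $\Frob_v$, is the same in both arguments. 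You instead prove only the inclusion $\alpha_v(1_v) \subseteq H^1(K_v^{\ur}/K_v, J[\pi])$ (using the same surjectivity of $\pi$ on $J(K_v^{\ur})$ that the paper assumes implicitly when it writes down its exact sequence — your three-step Néron-model justification of this is a worthwhile addition) and then upgrade to equality by comparing dimensions, using Remark \ref{twistselfdual} for $\alpha_v(1_v)$ and the standard self-annihilation of unramified cohomology under local Tate duality for the other side. The trade-off: the paper's route needs Lang's theorem but no duality, and it produces the isomorphism directly rather than as an abstract equality of subspaces; your route substitutes local duality (both for $\alpha_v(1_v)$, already available in Section 2, and for $H^1_{\ur}$, which is a comparably deep input). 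Both are complete; note only that your duality step quietly uses that the pairing \eqref{tld} is the cup-product pairing induced by the self-duality $e_\pi$, so that the exact-annihilator statement for $H^1_{\ur}$ applies verbatim.
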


\begin{proof}
Under the assumption, we have an exact sequence
$$
\xymatrix@R=5pt@C=15pt{
0 \ar[r] & J[\pi] \ar[r] & J(K_v^\ur) \ar^-{\pi}[r] & J(K_v^\ur) \ar[r] & 0,
}
$$
where $K_v^{\ur}$ is the maximal unramified extension of $K_v$.
Taking the long exact sequence, we get an exact sequence
$$
0 \too J(K_v)/\pi J(K_v) \too H^1(K_v^{\ur}/K_v, J[\pi]) \too H^1(K_v^{\ur}/K_v, J(K_v^{\ur}))[\pi] \too 0.
$$
It is well-known that $H^1(K_v^{\ur}/K_v, J(K_v^{\ur})) = 0$ (e.g., see \cite[proposition 1]{mcc}). Hence
$$
\alpha_v(1_v) \cong J(K_v)/\pi J(K_v) \cong H^1(K_v^{\ur}/K_v, J[\pi]) \cong J[\pi]/(\Frob_v-1)J[\pi],
$$
as wanted (for the last isomorphism, see \cite[Lemma B.2.8]{EulerSystems}).
\end{proof}

We identify  $\alpha_v(1_v)$ with $J(K_v)/\pi J(K_v)$ in the proof of the Lemma below.

\begin{lem}
\label{local}
Suppose that $\chi \in \Xset^p(K_v)$, and $F_v:= \overline{K}_v^{\ker(\chi)}$. Then we have
\begin{equation}
\label{intersection}
\alpha_v(1_v) \cap \alpha_v(\chi) \supseteq (\mathbf{N}(J(F_v)) + \pi J(K_v))/\pi J(K_v),
\end{equation}
where $\N(J(F_v))$ is the image of the norm map $J(F_v) \to J(K_v)$ and the intersection is taken in $H^{1}(K_v, J[\pi])$. In particular, if $\N(J(F_v)) = J(K_v)$, then $h_v(\chi) = 0$.
\end{lem}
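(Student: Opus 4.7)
The plan is to prove the containment in \eqref{intersection} by interpreting the Kummer image of a norm as a corestriction, and then transporting it through the canonical $G_{K_v}$-equivariant isomorphism $J^\chi[\pi]\cong J[\pi]$ from Proposition \ref{canonicaliso}. The case $\chi=1_v$ is trivial (then $F_v=K_v$ and both sides equal $\alpha_v(1_v)$), so I assume $\chi$ is nontrivial and $F_v/K_v$ is cyclic of degree $p$.

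The first ingredient is the standard identity $\delta_{K_v}\circ \N_{F_v/K_v}=\mathrm{cor}\circ\delta_{F_v}$ relating the Kummer map of $J$ to the corestriction $H^1(F_v,J[\pi])\to H^1(K_v,J[\pi])$ (an instance of the functoriality of the Kummer sequence under restriction of Galois groups), and likewise for $J^\chi$. The second ingredient is the isomorphism $\phi:J^\chi\to J$ defined over $F_v$ from Remark \ref{caniso}: it is $G_{F_v}$-equivariant, it intertwines the two $\pi$-multiplication sequences, and its restriction to $\pi$-torsion coincides with the canonical $G_{K_v}$-equivariant iso of Proposition \ref{canonicaliso}. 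The coincidence is the one delicate point: descending $\phi$ to $K_v$ introduces the $\bmu_p$-valued cocycle $\gamma\mapsto\chi(\gamma)$, but this cocycle acts trivially on $J[\pi]$ because $\zeta_p$ fixes every generator $[(\alpha_i,0)-\infty]$ under the action of Remark \ref{action}.

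Given these two ingredients, for $R\in J(F_v)$ I set $T:=\N^\chi_{F_v/K_v}(\phi^{-1}(R))\in J^\chi(K_v)$ and run the resulting commutative square. By naturality of corestriction with respect to the $G_{K_v}$-equivariant map $\phi|_{J^\chi[\pi]}$, the image of the Kummer class $\delta^\chi_{K_v}(T)$ under the identification $H^1(K_v,J^\chi[\pi])\cong H^1(K_v,J[\pi])$ equals $\mathrm{cor}(\phi_*\delta^\chi_{F_v}(\phi^{-1}(R)))=\mathrm{cor}(\delta_{F_v}(R))=\delta_{K_v}(\N_{F_v/K_v}(R))$. Thus $\delta_{K_v}(\N R)$ lies in $\alpha_v(\chi)$; it is also manifestly in $\alpha_v(1_v)$; and as $R$ varies these classes exhaust $(\N(J(F_v))+\pi J(K_v))/\pi J(K_v)$, giving the containment. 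The final assertion is then immediate: if $\N(J(F_v))=J(K_v)$ then the right-hand side is all of $\alpha_v(1_v)$, forcing $\alpha_v(1_v)\subseteq\alpha_v(\chi)$ and hence $h_v(\chi)=0$.

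The main obstacle, and essentially the only nontrivial step, is verifying that the corestriction naturality square commutes via the \emph{canonical} iso $J^\chi[\pi]\cong J[\pi]$ rather than some $\chi$-twisted version of it; this is precisely what Proposition \ref{canonicaliso} (together with the triviality of the $\bmu_p$-action on $\pi$-torsion noted in Remark \ref{action}) provides, and it is why that proposition is phrased over $L$ rather than only over the splitting field $F_v$.
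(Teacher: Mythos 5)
Your proof is correct, but it reaches the crucial containment $\mathrm{cor}(D)\subseteq\alpha_v(\chi)$ by a different route than the paper. The paper's proof never invokes the norm--corestriction compatibility for the twisted Jacobian; instead it applies Lemma \ref{selfdual} (Tate local duality) three times: the common Kummer image $D\subset H^1(F_v,J[\pi])$ satisfies $D=D^\perp$, and since $\mathrm{res}(\alpha_v(1_v))$ and $\mathrm{res}(\alpha_v(\chi))$ both land in $D$, one gets $D\subseteq\mathrm{res}(\alpha_v(\chi))^\perp=\mathrm{cor}^{-1}(\alpha_v(\chi))$ by the adjunction of $\mathrm{res}$ and $\mathrm{cor}$ under the cup-product pairing, whence $\mathrm{cor}(D)\subseteq\alpha_v(1_v)\cap\alpha_v(\chi)$. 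You instead prove the same containment directly, by transporting the identity $\delta_{K_v}\circ\N=\mathrm{cor}\circ\delta_{F_v}$ for $J^\chi$ through $\phi_*$, which requires checking that $\phi|_{J^\chi[\pi]}$ is $G_{K_v}$-equivariant (the descent cocycle acts through $\bmu_p$, which is trivial on $\pi$-torsion) and that $\phi_*$ commutes with corestriction; you identify and discharge exactly these points. Your argument is more elementary in that it needs no duality at all for this lemma, at the cost of the equivariance bookkeeping; the paper's argument outsources that bookkeeping to Lemma \ref{selfdual}, which is already in place and needed elsewhere (e.g.\ in Lemma \ref{ramd} and Proposition \ref{localintersection}), and only needs the easier fact that restriction of a Kummer class of a $K_v$-point is the Kummer class of that point over $F_v$. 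Both are valid; yours arguably gives a cleaner conceptual reason why the norm image lies in both local conditions.
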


\begin{proof}
Consider the commutative diagrams
$$
\begin{tikzpicture}
\matrix(m)[matrix of math nodes,
row sep=2.6em, column sep=1.5em,
text height=1.5ex, text depth=0.25ex]
{J(F_v)/\pi J(F_v)&H^1(F_v, J[\pi])&H^1(F_v, J[\pi])&J(F_v)/\pi J(F_v)\\
J(K_v)/\pi J(K_v)&H^1(K_v, J[\pi])&H^1(K_v, J[\pi])&J(K_v)/\pi J(K_v)\\};
\path[->,font=\scriptsize,>=angle 90]
 (m-2-1) edge node[auto] {$i$} (m-1-1)
 (m-1-1) edge (m-1-2)
(m-2-2)  edge node[right] {res} (m-1-2)
(m-1-3) edge node[auto] {cor} (m-2-3)
(m-1-4) edge node[auto] {$\mathbf{N}$} (m-2-4)
 (m-1-4) edge (m-1-3)
  (m-2-4) edge (m-2-3)
   (m-2-1) edge (m-2-2)
   ;
\end{tikzpicture}
$$

$$
\begin{tikzpicture}
\matrix(m)[matrix of math nodes,
row sep=2.6em, column sep=1.5em,
text height=1.5ex, text depth=0.25ex]
{H^1(F_v, J[\pi])&\times&H^1(F_v, J[\pi])&H^2(F_v, \boldsymbol{\mu}_p)\cong \Z/p\Z\\
H^1(K_v, J[\pi])&\times&H^1(K_v, J[\pi])&H^2(K_v, \boldsymbol{\mu}_p) \cong \Z/p\Z,\\};
\path[->,font=\scriptsize,>=angle 90]

(m-1-1)  edge node[right] {$\mathrm{cor}$} (m-2-1)
(m-2-3) edge node[auto] {$\mathrm{res}$} (m-1-3)
(m-1-3) edge node[auto] {$\cup$}(m-1-4)
 (m-2-3) edge node[auto] {$\cup$}(m-2-4)
(m-1-4) edge node[auto] {$\mathrm{cor} = \mathrm{id}$} (m-2-4)    ;
\end{tikzpicture}\\
$$
where $i$ is the natural map and $\N$ is induced by the norm map.

For convenience, let
\begin{align*}
A &=  \alpha_v(1_v), \\
B &= \alpha_v(\chi), \text{ and }\\
D &= J(F_v)/\pi J(F_v) \cong J^\chi(F_v)/\pi J^\chi(F_v).
\end{align*}

By Lemma \ref{selfdual}, we have $D = D^{\perp} \subseteq \mathrm{res}(A)^{\perp}= \mathrm{cor}^{-1}(A)$, and similarly, $D = D^{\perp} \subseteq  \mathrm{res}(B)^{\perp} = \mathrm{cor}^{-1}(B)$. Therefore, $\N(D) = \mathrm{cor}(D) \subseteq A \cap B.$
\end{proof}

\begin{lem}
\label{aviso}
Let $A$ be an abelian variety defined over $K_v$ such that $\Z[\zeta_p] \subset \End_{G_{K_v}}(A)$. Suppose that $v\nmid p\infty$. Then
$$
A(K_v)/\pi A(K_v) \cong A(K_v)[p^{\infty}]/\pi (A(K_v)[p^{\infty}]).
$$
\end{lem}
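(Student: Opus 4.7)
Write $M := A(K_v)$ and let $\ell$ be the residue characteristic of $K_v$, which differs from $p$ since $v \nmid p$. The plan is to cut $M$ by an open pro-$\ell$ subgroup $U$ on which $\pi$ acts invertibly, reducing the computation to the finite abelian quotient $M/U$, where only the $p$-primary part can contribute to a $\pi$-cokernel. For $U$ I would take the kernel of reduction in the N\'eron model of $A$: it is open of finite index in the compact group $M$, is pro-$\ell$ (being the $\O_{K_v}$-points of a formal group over $\O_{K_v}$), and is preserved by every $K_v$-endomorphism of $A$ via the N\'eron mapping property. In particular every element of $\Z[\zeta_p] \subseteq \End_{G_{K_v}}(A)$ acts on $U$; multiplication by $p$ is an automorphism of $U$ because $\ell \neq p$, and the unit $u$ with $\pi^{p-1} = pu$ is an automorphism of $U$ since its inverse in $\Z[\zeta_p]$ also acts on $U$. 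Hence $\pi^{p-1}$, and therefore $\pi$ itself, is an automorphism of $U$.

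Next I would apply the snake lemma to
\begin{equation*}
\xymatrix{
0 \ar[r] & U \ar[r] \ar^\pi[d] & M \ar[r] \ar^\pi[d] & M/U \ar[r] \ar^\pi[d] & 0 \\
0 \ar[r] & U \ar[r] & M \ar[r] & M/U \ar[r] & 0.
}
\end{equation*}
Since the leftmost vertical arrow is an isomorphism, the snake sequence collapses and yields $M/\pi M \cong (M/U)/\pi(M/U)$. The finite abelian group $M/U$ decomposes canonically as $(M/U)[p^\infty] \oplus (M/U)^{(p)}$, the second summand being the prime-to-$p$ part; on that summand $p$ is invertible, hence so are $\pi^{p-1} = pu$ and $\pi$. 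Thus $(M/U)/\pi(M/U) = (M/U)[p^\infty]/\pi(M/U)[p^\infty]$.

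Finally I would identify $(M/U)[p^\infty]$ with $M[p^\infty]$ as $\Z[\zeta_p]$-modules. The natural map $M[p^\infty] \to M/U$ is injective because $U$, being pro-$\ell$, has no nontrivial $p$-power torsion; and every $\bar x \in (M/U)[p^\infty]$ represented by $x \in M$ with $p^k x \in U$ lifts back to $M[p^\infty]$ by subtracting the unique $u \in U$ with $p^k u = p^k x$ (which exists by $p$-divisibility of $U$). This isomorphism is $\Z[\zeta_p]$-equivariant, so passing to $\pi$-cokernels assembles the chain of isomorphisms into $M/\pi M \cong M[p^\infty]/\pi M[p^\infty]$, as desired. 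The only mildly technical step is producing the $\pi$-stable pro-$\ell$ subgroup $U$; once that is in hand the remainder is routine diagram chasing on finite abelian groups.
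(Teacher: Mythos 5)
Your proof is correct and rests on the same underlying idea as the paper's one-line argument: since $\pi^{p-1}=pu$ with $u$ a unit, $\pi$ acts invertibly on everything prime to $p$, and for $v\nmid p\infty$ the $p$-primary part of $A(K_v)$ is exactly the finite torsion group $A(K_v)[p^\infty]$. You merely make the structural input explicit (the N\'eron-model kernel of reduction as a $\pi$-stable pro-$\ell$ subgroup, followed by the snake lemma), where the paper appeals directly to the decomposition of the compact group $A(K_v)$ into its pro-$p$ and pro-(prime-to-$p$) parts.
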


\begin{proof}
Multiplication by $\pi$ is surjective on the pro-(prime to $p$) part of $A(K_v)$, so only the pro-$p$ part $A(K_v)[p^\infty]$ contributes to $A(K_v)/\pi A(K_v)$, whence the result follows.
\end{proof}

 \begin{lem}
 \label{ram}
Suppose that $\chi \in \Xset^p(K_v)$, and $F_v = \overline{K}_v^{\ker({\chi})}$, where $v\nmid p\infty$. Then the following hold.
\begin{enumerate}
 \item
$d_p(\alpha_v(1_v))=d_p(J(K_v)/\pi J(K_v))=d_p(J(K_v)[\pi]).$
 \item
 Suppose further that, the extension $F_v/K_v$ is ramified and $J$ has good reduction. Then
$J(K_v)/\pi J(K_v)  \cong  J(F_v)/\pi J(F_v).$
 \end{enumerate}
\end{lem}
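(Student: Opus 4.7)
The plan is to reduce both parts to the finite group $M := J(K_v)[p^\infty]$ via Lemma \ref{aviso}, which applies because $J$ carries a natural $\Z[\zeta_p]$-action (Remark \ref{action}).

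For (i), the Kummer map $J(K_v)/\pi J(K_v)\hookrightarrow H^1(K_v,J[\pi])$ arising from the long exact sequence attached to $0\to J[\pi]\to J\xrightarrow{\pi}J\to 0$ is injective, so $d_p(\alpha_v(1_v)) = d_p(J(K_v)/\pi J(K_v))$. Lemma \ref{aviso} identifies the right-hand side with $d_p(M/\pi M)$. The group $M$ is finite: $J(K_v)$ is an extension of a finite group by a pro-$\ell$ group, where $\ell$ is the residue characteristic (and $\ell\neq p$), so its $p$-primary torsion is finite. For a finite abelian $p$-group equipped with any endomorphism $\pi$, the kernel and cokernel of $\pi$ have equal cardinality, and hence equal $\Fp$-dimension, giving $d_p(M/\pi M)=d_p(M[\pi])$. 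Finally $M[\pi] = J(K_v)[\pi]$, since any $\pi$-torsion element is killed by $\pi^{p-1}=pu$ and therefore lies in $J(K_v)[p]\subset M$.

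For (ii), since $\chi$ is ramified it is nontrivial, so $[F_v:K_v]=p$. As $v\nmid p$, the extension $F_v/K_v$ is tame and, being of prime degree $p$, is totally ramified; in particular $k_{F_v}=k_v$ and $F_v^{\ur}=F_v K_v^{\ur}$. The natural restriction $\Gal(F_v^{\ur}/F_v)\isom\Gal(K_v^{\ur}/K_v)$ then identifies $\Frob_{F_v}$ with $\Frob_{K_v}$. The good reduction assumption forces inertia at $v$ to act trivially on $J[p^\infty]$, so
$$
J(F_v)[p^\infty] = J(\overline{K}_v)[p^\infty]^{\Frob_{F_v}} = J(\overline{K}_v)[p^\infty]^{\Frob_{K_v}} = J(K_v)[p^\infty].
$$
Applying Lemma \ref{aviso} over both $K_v$ and $F_v$, each of $J(K_v)/\pi J(K_v)$ and $J(F_v)/\pi J(F_v)$ is isomorphic to $M/\pi M$, yielding the desired isomorphism.

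The only substantive point is the observation driving (ii): that a totally and tamely ramified extension cannot enlarge the $p$-power torsion of an abelian variety with good reduction when $p$ is coprime to the residue characteristic. Everything else is formal manipulation using Lemma \ref{aviso} and the fact that on a finite abelian $p$-group, kernel and cokernel of any endomorphism have the same order.
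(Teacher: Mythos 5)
Your proof is correct and follows essentially the same route as the paper: both parts are reduced to the finite group $J(K_v)[p^\infty]$ via Lemma \ref{aviso}, with (i) coming from the equality of kernel and cokernel of $\pi$ on a finite group, and (ii) from the fact that good reduction makes $J[p^\infty]$ unramified while $F_v/K_v$ is totally ramified, so $J(F_v)[p^\infty]=J(K_v)[p^\infty]$.
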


 \begin{proof}
We have an exact sequence
$$
\xymatrix@R=5pt@C=15pt{
0 \ar[r] & J(K_v)[\pi] \ar[r] & J(K_v)[p^{\infty}] \ar^-{\pi}[r] & J(K_v)[p^{\infty}] \ar[r] & J(K_v)[p^{\infty}]/\pi  J(K_v)[p^{\infty}] \ar[r] & 0.
}
$$
Hence (i) follows from Lemma \ref{aviso}.

Now we prove (ii). Under the assumptions, $K_v(J[p^{\infty}])$ is an unramified extension over $K_v$. Therefore  $K_v(J(F_v)[p^{\infty}])$ = $K_v$, so $J(K_v)[p^{\infty}] = J(F_v)[p^{\infty}]$. Assertion (ii) follows from passing through (Lemma \ref{aviso})
\begin{align*}
J(K_v)/\pi J(K_v) & \cong J(K_v)[p^{\infty}]/\pi (J(K_v)[p^{\infty}]) \\
& \cong J(F_v)[p^{\infty}]/\pi (J(F_v)[p^{\infty}])  \\
& \cong J(F_v)/\pi J(F_v).
\end{align*}
\end{proof}

\begin{lem}
\label{cycle}
Suppose that $\sigma \in \Gal(f) \subset S_n$ consists of $b$ orbits. Let $J[\pi]^{\sigma =1}$ denote the subgroup of $J[\pi]$ which consists of the elements fixed by $\sigma$. Then $$d_p(J[\pi]^{\sigma =1}) = b-1,$$
\end{lem}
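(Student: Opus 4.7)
The plan is to realize $J[\pi]$ as a quotient of the permutation module on the roots, so that the lemma reduces to a linear-algebra calculation about cyclic permutation modules over $\Fp$.

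Following Remark \ref{action}, I would introduce the $n$-dimensional $\Fp$-vector space $V := \bigoplus_{i=1}^n \Fp\, e_i$, on which $\sigma$ acts by permuting the basis vectors in the same way it permutes the roots $\alpha_1, \dots, \alpha_n$ of $f$. The assignment $e_i \mapsto [(\alpha_i,0) - \infty]$ defines a $\sigma$-equivariant surjection $\varphi : V \twoheadrightarrow J[\pi]$. Since $\dim_{\Fp} J[\pi] = n - 1$ by Remark \ref{action} and $\sum_i e_i$ maps to zero by the relation there, $\ker \varphi = \Fp \cdot (e_1 + \cdots + e_n)$, which is one-dimensional and $\sigma$-fixed.

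Next I would compute $J[\pi]^{\sigma=1}$ via the preimage $W := \varphi^{-1}(J[\pi]^{\sigma=1}) = \{v \in V : (\sigma - 1)v \in \ker\varphi\}$, so that $J[\pi]^{\sigma=1} \cong W/\ker\varphi$. The restriction of $\sigma - 1$ to $W$ has kernel $V^{\sigma=1}$ (of dimension $b$, spanned by the $b$ orbit-sums) and image contained in the one-dimensional space $\ker\varphi$. Hence $\dim W \in \{b, b+1\}$ depending on whether $\sum_i e_i \in \mathrm{Im}(\sigma - 1)$, and correspondingly $d_p(J[\pi]^{\sigma=1}) \in \{b-1, b\}$. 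It remains to show that the larger value cannot occur, i.e.\ that $\sum_i e_i \notin \mathrm{Im}(\sigma - 1)$.

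To do this, I would decompose $V = \bigoplus_O V_O$ according to the orbits of $\sigma$ and use the standard cyclic-group computation: on each summand $V_O$, $\mathrm{Im}(\sigma - 1)|_{V_O}$ is contained in the kernel of the norm $1 + \sigma + \cdots + \sigma^{|O|-1}$, and a dimension count identifies both with the augmentation-zero subspace $\{\sum_{j \in O} c_j e_j : \sum_j c_j = 0\}$. Hence $v = \sum_j a_j e_j$ lies in $\mathrm{Im}(\sigma - 1)$ iff $\sum_{j \in O} a_j \equiv 0 \pmod p$ for every orbit $O$. For $v = \sum_i e_i$, this partial sum equals $n_O := |O|$; since $n = \sum_O n_O$ and the superelliptic hypothesis forces $p \nmid n$, at least one $n_O$ is nonzero in $\Fp$, so $\sum_i e_i \notin \mathrm{Im}(\sigma - 1)$, giving $d_p(J[\pi]^{\sigma=1}) = b - 1$. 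The main obstacle is exactly this last step: the one-dimensional ``obstruction'' $\sum_i e_i$ must be ruled out from $\mathrm{Im}(\sigma - 1)$, and this is where the superelliptic condition $p \nmid \deg(f)$ enters essentially.
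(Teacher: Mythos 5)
Your proof is correct and follows essentially the same route as the paper: both realize $J[\pi]$ as the permutation module on the roots modulo the line spanned by $\sum_i e_i$, and both use $p\nmid n$ at the decisive point (the paper rearranges the cycles so the last one has length prime to $p$; you show $\sum_i e_i\notin\mathrm{Im}(\sigma-1)$). Your write-up in fact supplies the details the paper compresses into ``one can show.''
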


\begin{proof}
Let $\sigma$  be $(\alpha_{11}\alpha_{12}\cdots \alpha_{1i_1})(\alpha_{21}\cdots \alpha_{2i_2})$ $\cdots$ $(\alpha_{b1}\cdots \alpha_{bi_b})$, where $\alpha_{xy}$ are the roots of $f$. We rearrange $\alpha_{xy}$ so that $p\nmid i_b$, which is always possible because $p\nmid n$. Let $a_{xy}$ denote the divisor classes $[(\alpha_{xy}, 0) - \infty]$. Then with the equality $\sum a_{xy} = 0$ (Remark \ref{action}), one can show that
$$
a_{11} + a_{12} +\cdots + a_{1i_1},  a_{21} + \cdots + a_{2i_2}, \cdots\cdots , a_{(b-1)1} + \cdots + a_{b-1i_{b-1}}
$$
form a basis of $J[\pi]^{\sigma =1}$.
\end{proof}

\begin{rem}
Suppose that $v \nmid p\infty$. Let $K_v^{\ur}$ be the maximal unramified extension of $K_v$. It is well-known that if $J/K_v$ has good reduction, then
$J[\pi] \subset J[p] \subset J(K_v^{\ur})$. Let $\Frob_v$ denote the Frobenius automorphism of $K_v^{\ur}$. Restricting $\Frob_v$ to $K(J[\pi])$, one can regard $\Frob_v$ as an element in $S_n$ by Remark \ref{snaction}.
\end{rem}

\begin{lem}
\label{dcyc}
Suppose that $v \nmid p\infty$, and $J$ has a good reduction. Let $b$ be the number of orbits of $\Frob_v \in S_n$. Then
$$
d_p(\alpha_v(1_v)) = b-1.
$$
\end{lem}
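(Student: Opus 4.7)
The proof should just chain together the previous three lemmas. The plan is as follows.

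First, apply Lemma \ref{hardtofindref}, which is available because $v \nmid p\infty$ and $J/K_v$ has good reduction by hypothesis. This gives
$$
\alpha_v(1_v) \;\cong\; J[\pi]/(\Frob_v-1)J[\pi],
$$
so it suffices to compute the $\Fp$-dimension of the right-hand side.

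Next, use the fact that $J[\pi]$ is a finite-dimensional $\Fp$-vector space and $\Frob_v-1$ is an $\Fp$-linear endomorphism of it. For any endomorphism $\phi$ of a finite-dimensional vector space, $\dim\ker\phi=\dim\mathrm{coker}\,\phi$ by rank-nullity, so
$$
d_p\bigl(J[\pi]/(\Frob_v-1)J[\pi]\bigr) \;=\; d_p\bigl(J[\pi]^{\Frob_v=1}\bigr).
$$
Here I am using the remark immediately preceding the statement to regard $\Frob_v$ as an element of $S_n$ acting on $J[\pi]$ as in Remark \ref{snaction}; this is legitimate because good reduction and $v\nmid p$ force $J[\pi]\subset J(K_v^{\ur})$, so $\Frob_v$ actually acts, and the action is via the image of $\Frob_v$ in $\Gal(f)\subset S_n$.

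Finally, apply Lemma \ref{cycle} to $\sigma=\Frob_v$: this element consists of $b$ orbits on the roots of $f$ by the definition of $b$, hence $d_p(J[\pi]^{\Frob_v=1})=b-1$. Chaining the three equalities yields $d_p(\alpha_v(1_v))=b-1$. There is no real obstacle; the only subtlety worth mentioning is the identification of $\Frob_v$ with an element of $S_n$ in order to invoke Lemma \ref{cycle}, which is handled by the remark above and is standard for abelian varieties with good reduction at primes of residue characteristic $\neq p$.
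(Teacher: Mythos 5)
Your proof is correct and essentially matches the paper's: both reduce to computing $d_p(J[\pi]^{\Frob_v=1})$ and then invoke Lemma \ref{cycle}. The only cosmetic difference is that the paper gets there via Lemma \ref{ram}(i) and the identification $J(K_v)[\pi]=J[\pi]^{\Frob_v=1}$, whereas you use the cokernel description from Lemma \ref{hardtofindref} together with rank--nullity; the two are equivalent.
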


\begin{proof}
Note that $J(K_v)[\pi] = J[\pi]^{\Frob_v = 1}$. Then the lemma follows from Lemma \ref{cycle} and Lemma \ref{ram}.
\end{proof}

 \begin{lem}
 \label{localzero}
Let $\chi \in \Xset^p(K_v)$. Suppose that $\chi$ satisfies one of the following conditions:
\begin{itemize}
\item
$\chi$ is trivial, or
\item
$J$ has good reduction, $v \nmid p\infty$, and $\chi$ is unramified.
\end{itemize}
Then $h_v(\chi) = 0$.
 \end{lem}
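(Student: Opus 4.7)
My plan is to handle the two hypotheses separately. If $\chi = 1_v$, then $J^\chi = J$ and the identification $H^1(K_v, J^\chi[\pi]) \cong H^1(K_v, J[\pi])$ is the identity, so $\alpha_v(\chi) = \alpha_v(1_v)$ and $h_v(\chi) = 0$ follows immediately from the definition.

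For the second hypothesis, the strategy is to show that both $\alpha_v(1_v)$ and $\alpha_v(\chi)$ coincide with the unramified subgroup $H^1(K_v^{\ur}/K_v, J[\pi])$ inside $H^1(K_v, J[\pi])$. First I would use that $\chi$ is unramified to pick a representative $d \in \O_{K_v}^\times$ for $\chi$ under $K_v^\times/(K_v^\times)^p \cong \Hom(G_{K_v}, \boldsymbol{\mu}_p)$. Since $d$ is a unit, $d^{-1}f$ has the same discriminant as $f$ up to a unit, so the twisted curve $C_{p, d^{-1}f}$ (hence $J^\chi$) also has good reduction over $K_v$. Lemma \ref{hardtofindref} then applies to both $J$ and $J^\chi$, and its proof actually identifies $\alpha_v(1_v)$ with $H^1(K_v^{\ur}/K_v, J[\pi])$ and the Kummer image for $J^\chi$ with $H^1(K_v^{\ur}/K_v, J^\chi[\pi])$. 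Finally, the $G_{K_v}$-equivariant isomorphism $J[\pi] \cong J^\chi[\pi]$ of Proposition \ref{canonicaliso} transports unramified cohomology to unramified cohomology, so under the identification used to define $\alpha_v(\chi)$ the image of $H^1(K_v^{\ur}/K_v, J^\chi[\pi])$ is exactly $H^1(K_v^{\ur}/K_v, J[\pi]) = \alpha_v(1_v)$. Hence $\alpha_v(\chi) = \alpha_v(1_v)$ and $h_v(\chi) = 0$.

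The only step with any real content is the preservation of good reduction under an unramified twist, which reduces to the discriminant remark above, together with the observation that a Galois-equivariant isomorphism of $G_{K_v}$-modules respects the inertia action and therefore identifies unramified cohomology subgroups. Neither is a serious obstacle, and I do not anticipate any hidden difficulty: the lemma is essentially the standard compatibility between Kummer images and unramified cohomology in the good-reduction setting, transported across the twisting identification of Proposition \ref{canonicaliso}.
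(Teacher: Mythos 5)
Your proof is correct, but it follows a genuinely different route from the paper's. The paper's proof is essentially one line: it invokes Lemma \ref{local}, which reduces the claim to surjectivity of the norm map $\N\colon J(F_v)\to J(K_v)$ for the unramified extension $F_v=\overline{K}_v^{\ker\chi}$, and that surjectivity is quoted from Mazur (the reference \cite[Corollary 4.4]{norm}). You instead prove the stronger statement $\alpha_v(\chi)=\alpha_v(1_v)$ directly, by identifying both Kummer images with the unramified subgroup $H^1(K_v^{\ur}/K_v, J[\pi])$ via Lemma \ref{hardtofindref} applied to $J$ and to $J^\chi$, and transporting along the equivariant isomorphism of Proposition \ref{canonicaliso}. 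The two arguments are cousins --- Lemma \ref{hardtofindref} itself rests on $H^1(K_v^{\ur}/K_v, J(K_v^{\ur}))=0$, which is the same Lang--Mazur input underlying the norm surjectivity --- but yours avoids the extra citation and Lemma \ref{local} altogether, at the cost of having to check that $J^\chi$ again has good reduction. On that last point, your justification via the discriminant of the model $y^p=d^{-1}f$ is slightly off-target, since the hypothesis is that $J$ (not a chosen integral model of $C_{p,f}$) has good reduction; it is cleaner to note that $J^\chi$ becomes isomorphic to $J$ over the unramified extension $F_v$, so its prime-to-residue-characteristic Tate modules are unramified and N\'eron--Ogg--Shafarevich gives good reduction of $J^\chi$. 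With that repair the argument is complete.
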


 \begin{proof}
 Let $F_v = \overline{K}_v^{\ker(\chi)}$. In either case, $\N(J(F_v)) = J(K_v)$ (In second case, it follows from \cite[Corollary 4.4]{norm}). Thus, by Lemma \ref{local}, $h_v(\chi) = 0.$
 \end{proof}

\begin{lem}
\label{ramd}
 Suppose that $\chi \in \Xset^p(K_v)$, and $F_v:=\overline{K}_v^{\ker(\chi)}$, where $v \nmid p\infty$. Suppose that $J(K_v)/\pi J(K_v)  \cong  J(F_v)/\pi J(F_v)$ (which is satisfied if the extension $F_v/K_v$ is ramified and $J$ has good reduction by \text{Lemma }\ref{ram}$)$.
Then
$$
\alpha_v(1_v) \cap \alpha_v(\chi) = \{0 \};
$$
i.e., $h_v(\chi) = \dim_{\Fp}(J(K_v)[\pi])$.

\end{lem}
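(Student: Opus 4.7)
The goal is to show that if $c \in \alpha_v(1_v) \cap \alpha_v(\chi)$, then $c = 0$. Writing $c = \phi(P) = \phi'(Q)$ with $P \in J(K_v)$ and $Q \in J^\chi(K_v)$, this reduces to proving $P \in \pi J(K_v)$. My plan is to first extract structural consequences from the hypothesis. By Lemma \ref{aviso} both $J(K_v)/\pi J(K_v)$ and $J(F_v)/\pi J(F_v)$ are identified with their $p^\infty$-torsion quotients, and a Nakayama-type argument for finite $p$-groups then forces $J(K_v)[p^\infty] = J(F_v)[p^\infty]$; in particular $J(K_v)[\pi] = J(F_v)[\pi]$. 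Applying the snake lemma to
\[
0 \to J(K_v) \to J(F_v) \to J(F_v)/J(K_v) \to 0
\]
with vertical maps given by multiplication by $\pi$, the hypothesis (that the natural map mod $\pi$ is both injective and surjective) forces $(J(F_v)/J(K_v))[\pi] = 0$ and $(J(F_v)/J(K_v))/\pi = 0$, i.e., $\pi$ acts bijectively on $J(F_v)/J(K_v)$. Consequently $\pi J(F_v) \cap J(K_v) = \pi J(K_v)$, and for any $y \in J(F_v)$ written as $y = x + \pi w$ with $x \in J(K_v)$ we get $\N_{F_v/K_v}(y) = p x + \pi \N(w) \in \pi J(K_v)$ (using $p = \pi^{p-1}u$), so $\N(J(F_v)) \subseteq \pi J(K_v)$.

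Next I restrict $c$ to $F_v$. The $F_v$-isomorphism $\psi : J^\chi \to J$ inducing the canonical identification of Proposition \ref{canonicaliso} gives $P \equiv \psi(Q) \pmod{\pi J(F_v)}$, say $P = \psi(Q) + \pi R$ with $R \in J(F_v)$. Since $\psi(Q)$ lies in the $\chi$-eigenspace $M_\chi = J(F_v)^{\sigma = \chi}$ for $H = \Gal(F_v/K_v) = \langle \sigma \rangle$ and $\sum_{k=0}^{p-1} \chi(\sigma)^k = 0$, we have $\N(\psi(Q)) = 0$, hence $p P = \pi \N(R)$. Applying $\sigma$ to the identity $P = \psi(Q) + \pi R$ and using $\sigma(P) = P$ together with $\sigma(\psi(Q)) = \zeta_p^a \psi(Q)$ (where $\zeta_p^a = \chi(\sigma)$), a short calculation using $1 - \zeta_p^a = \pi u_a$ for a unit $u_a \in \Z[\zeta_p]^\times$ yields $\pi\bigl(\sigma(R) - R - u_a \psi(Q)\bigr) = 0$, so
\[
\sigma(R) - R = u_a \psi(Q) + T, \qquad T \in J(F_v)[\pi] = J(K_v)[\pi].
\]

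The final step is to iterate this recursion to compute $\N(R) = \sum_{k=0}^{p-1} \sigma^k(R)$ explicitly in terms of $P$, $\psi(Q)$, and $T$, and then to compare with the norm identity $pP = \pi \N(R)$ modulo $\pi J(K_v)$, using the containment $\N(J(F_v)) \subseteq \pi J(K_v)$ established in the first paragraph. After collecting the $\sigma$-geometric sums (which involve the character sums $\sum \zeta_p^{ka}$ and their derivatives), the resulting equation modulo $\pi J(K_v)$ collapses to force $T = 0$ and then $P \in \pi J(K_v)$, completing the proof. The main obstacle is exactly this last algebraic reconciliation: the naked identity $pP = \pi \N(R)$ only controls $P$ up to higher powers of $\pi$, so one must carefully track how the torsion error $T$ (which is an invariant of $(P,Q)$ under the allowable modifications $R \mapsto R + R_0$ with $R_0 \in J(K_v)[\pi]$) propagates through the $\zeta_p$- and $\sigma$-actions, and then leverage the $\pi$-bijectivity of $J(F_v)/J(K_v)$ from Step 1 to absorb every residual $\pi$-torsion contribution.
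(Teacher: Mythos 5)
Your argument has a genuine gap, and I do not believe it can be closed from the ingredients you assemble. After writing $c=\phi(P)=\phi'(Q)$, the only consequences of the hypothesis that you actually extract are (a) $P\equiv\psi(Q)\pmod{\pi J(F_v)}$, i.e.\ the two classes agree after restriction to $F_v$, and (b) the rationality conditions $\sigma(P)=P$ and $\sigma(\psi(Q))=\chi(\sigma)\psi(Q)$. This data is strictly weaker than the equality $\phi(P)=\phi'(Q)$ in $H^1(K_v,J[\pi])$: the kernel of $\mathrm{res}:H^1(K_v,J[\pi])\to H^1(F_v,J[\pi])$ is $\Hom(\Gal(F_v/K_v),J(K_v)[\pi])$, which is nontrivial. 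Concretely, take $P\in J(K_v)[\pi]$ nonzero, $Q=\psi^{-1}(P)\in J^\chi(K_v)[\pi]$, and $R=0$. Then every relation you derive holds ($P=\psi(Q)+\pi\cdot 0$, $\sigma(R)-R=u_a\psi(Q)+T$ with $T=-u_a\psi(Q)\in J[\pi]$, and $pP=\pi\N(R)=0$ since $p=u\pi^{p-1}$), yet $P\notin\pi J(K_v)$ under the lemma's hypotheses, because $d_p(J(K_v)[\pi])=d_p(J(K_v)/\pi J(K_v))$ forces the natural map $J(K_v)[\pi]\to J(K_v)/\pi J(K_v)$ to be injective. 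So no amount of manipulation of the recursion for $\sigma(R)-R$ and the norm identity can "collapse to force $T=0$"; the step you flag as the main obstacle is not merely unfinished, it is unreachable by this route. What is missing is precisely the coboundary condition over $K_v$ itself, i.e.\ the fact that $\phi(P)-\phi'(Q)$ is zero and not merely an element of $\ker(\mathrm{res})$.

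The paper's proof goes the other way around and makes essential use of local Tate duality: since $\alpha_v(1_v)$ and $\alpha_v(\chi)$ are each their own orthogonal complements (Lemma \ref{selfdual}), proving $\alpha_v(1_v)\cap\alpha_v(\chi)=0$ is equivalent to proving $\alpha_v(1_v)+\alpha_v(\chi)=H^1(K_v,J[\pi])$. One reduces to showing $\ker(\mathrm{res})\subseteq\alpha_v(1_v)+\alpha_v(\chi)$, and then checks that the map $J(K_v)[\pi]\to\Hom(\Gal(F_v/K_v),J(K_v)[\pi])$ sending $P$ to $j(\overline P)-j^\chi(\overline{\psi(P)})$ is an isomorphism onto $\ker(\mathrm{res})$. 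Note that this isomorphism is exactly the phenomenon underlying the counterexample above: for torsion points the two Kummer classes restrict to zero over $F_v$ but differ by a nonzero element of $\ker(\mathrm{res})$. If you want a direct, non-dual proof you would have to incorporate the full cocycle identity on all of $G_{K_v}$, which your restriction-to-$F_v$ computation discards at the outset.
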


\begin{proof}
Consider the following commutative diagrams
$$
\xymatrix{
J(F_v)/\pi J(F_v) \ar[r] & H^1(F_v, J[\pi]) & H^1(F_v, J[\pi]) \ar^{\mathrm{cor}}[d] & J(F_v)/\pi J(F_v) \ar^{\N}[d] \ar[l] \\
J(K_v)/\pi J(K_v) \ar^j[r] \ar_{i}^\cong[u] & H^1(K_v, J[\pi]) \ar_{\mathrm{res}}[u]   & H^1(K_v, J[\pi]) & J(K_v)/\pi J(K_v) \ar_j[l]
}
$$
as in the proof of Lemma \ref{local}. The map $i$ in the diagrams is an isomorphism by assumptions.
Let $a \in J(F_v)$. Then $a = \pi b + c$ where $b \in J(F_v)$ and $c \in J(K_v)$. Therefore
$$
\N(a) = \pi\N(b) + pc \in \pi J(K_v).
$$
This means the map $\N$ in the diagrams is actually the zero map. Let
\begin{align*}
A &:= J(K_v)/\pi J(K_v), \\
B &:= J^{\chi}(K_v)/\pi J^{\chi}(K_v), \text{ and }\\
D &:= J(F_v)/\pi J(F_v) \cong J^{\chi}(F_v)/\pi J^{\chi}(F_v),
\end{align*}
for simplicity. From now on, we identify $A, B$ and $D$ with their Kummer images. We need to show that $A \cap B = \{0\}$. In other words, we want to show that $A + B  \text{ ($ =(A\cap B)^{\perp})$} = H^1(K_v, J[\pi])$.
One inclusion is trivial. For the other inclusion, let $x$ be an element in $H^1(K_v, J[\pi])$. According to third diagram in the proof of Lemma \ref{local}, we have $\mathrm{res}^{-1}(D) = (\mathrm{cor}(D))^\perp = 0^\perp = H^1(K_v,J[\pi])$, so we can find $y \in A$ such that $\mathrm{res}(x-y) = 0$. Since $x-y \in \mathrm{ker}(\mathrm{res})$, it is enough to show that
\begin{equation}
\label{NTS}
 \mathrm{ker}(\mathrm{res}) \subseteq A + B.
 \end{equation}
By the Inflation-Restriction Sequence, $$
\mathrm{ker}(\mathrm{res}) \cong H^1(F_v/K_v, J[\pi]^{G_{F_v}}).
$$
But actually, $J[\pi]^{G_{F_v}} = J(F_v)[\pi] = J(K_v)[\pi]$ by Lemma \ref{ram}(i). Hence we have
\begin{equation}
\label{add}
\mathrm{ker}(\mathrm{res}) \cong \Hom(\Gal(F_v/K_v), J(K_v)[\pi]).
\end{equation}
Let $\psi : J \cong J^\chi$ be an isomorphism defined over $F_v$. We have the following commutative diagram
\[
\xymatrix{
& 0\ar[d] \\
& \Hom(\Gal(F_v/K_v), J(K_v)[\pi]) \ar[d] \\
J(K_v) / \pi J(K_v) \ar@{^{(}->}[r]^-{j} \ar[d] & H^1 (K_v, J[\pi]) \ar[d] & J^{\chi} (K_v) /\pi J^{\chi} (K_v) \ar@{_{(}->}[l]_-{j^\chi} \ar[d] \\
J(F_v)/\pi J(F_v) \ar@{^{(}->}[r] \ar @/_2pc/ [rr]^-{\psi}_{\simeq} & H^1(F_v, J[\pi]) & J^\chi (F_v) / \pi J^\chi(F_v), \ar@{_{(}->}[l],
}
\]
where the middle column is exact.
Define a homomorphism
\begin{align*}
\phi: J(K_v)[\pi] & \to \Hom(\Gal(F_v/K_v), J(K_v)[\pi]) \\
P & \mapsto j(\overline{P}) - j^\chi(\overline{\psi(P)}),
\end{align*}
where $\overline{P}$ is the image of $P$ in $J(K_v)/\pi J(K_v)$ and $\overline{\psi(P)}$ is the image of $\psi(P)$ in $J^\chi (K_v) / \pi J^\chi(K_v)$. That the map $\phi$ being well-defined can be shown by diagram chasing in the diagram above. We claim that the map $\phi$ is an isomorphism, which will imply \eqref{NTS}. It is enough to show the map $\phi$ is injective since $J(K_v)[\pi]$ and $\Hom(F_v/K_v, J(K_v)[\pi])$ have the same dimension over $\Fp$. It can be easily checked that for $P \neq 0$,
$$
(j(\overline{P}) - j^\chi(\overline{\psi(P)}))(\tau) \neq 0,
$$
where $\tau$ is a nontrivial element in $\Gal(F_v/K_v)$, so we are done.
\end{proof}

Until the end of this section, assume that $p=2$. We show that the equality holds in \eqref{intersection} when $p=2$. For the elliptic curve case, the following proposition is proved in \cite[Proposition 7]{kramer} and \cite[Proposition 5.2]{MR2}.

\begin{prop}
\label{localintersection}
Let $\chi
 \in \Xset^2(K_v)$. Then
\begin{equation}
\alpha_v(1_v) \cap \alpha_v(\chi) = \N J(L_v)/2J(K_v),
\end{equation}
where $L_v = \overline{K}_v^{\ker(\chi)},$ and $\N J(L_v)$ is the image of the norm map $\N: J(L_v) \to J(K_v)$.
\end{prop}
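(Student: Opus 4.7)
Lemma~\ref{local} already supplies the inclusion $\N J(L_v)/2J(K_v) \subseteq \alpha_v(1_v) \cap \alpha_v(\chi)$, so only the reverse inclusion is at stake. If $\chi = 1_v$ then $L_v = K_v$, the map $\N$ is the identity, and both sides coincide with $\alpha_v(1_v)$; henceforth I assume $L_v/K_v$ is quadratic with nontrivial Galois element $\sigma$. The plan is to fix an $L_v$-isomorphism $\phi \colon J \isom J^\chi$ --- by definition of the twist, $\phi^\sigma = -\phi$, which unwinds to $\phi(\tau(y)) = -\tau(\phi(y))$ whenever $\tau|_{L_v} = \sigma$ and $y \in J(\overline{K}_v)$ (and likewise for $\phi^{-1}$) --- and then realize any class in $\alpha_v(1_v) \cap \alpha_v(\chi)$ as the norm of an explicitly constructed element of $J(L_v)$.

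Given $x \in \alpha_v(1_v) \cap \alpha_v(\chi)$, I write $x = j(\overline{P}) = j^\chi(\overline{P^\chi})$ with $P \in J(K_v)$ and $P^\chi \in J^\chi(K_v)$, and choose $Q \in J(\overline{K}_v)$, $Q^\chi \in J^\chi(\overline{K}_v)$ with $2Q = P$ and $2Q^\chi = P^\chi$. The two Kummer cocycles $\tau \mapsto \tau(Q) - Q$ and $\tau \mapsto \phi^{-1}(\tau(Q^\chi) - Q^\chi)$ both represent $x$ in $H^1(K_v, J[2])$, via the $G_{K_v}$-equivariant identification $\phi^{-1} \colon J^\chi[2] \isom J[2]$ of Proposition~\ref{canonicaliso} (the twist sign is harmless on $2$-torsion since $-y = y$ there). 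Being cohomologous, they differ by a coboundary $\tau \mapsto \tau(T_0) - T_0$ for some $T_0 \in J[2]$, and after replacing $Q$ by $Q - T_0$ I may assume
$$
\tau(Q) - Q = \phi^{-1}(\tau(Q^\chi) - Q^\chi) \qquad \text{for every } \tau \in G_{K_v}.
$$
With this matching in hand, I set $D := Q - \phi^{-1}(Q^\chi)$ and claim $D \in J(L_v)$ with $\N_{L_v/K_v}(D) = P$, which finishes the proof. For $\tau \in G_{L_v}$, $L_v$-linearity of $\phi^{-1}$ combined with the displayed equality gives $\tau(D) - D = 0$; for any $\tau$ lifting $\sigma$, the twist relation yields $\tau(\phi^{-1}(Q^\chi)) = -\phi^{-1}(\tau(Q^\chi))$, whence
$$
D + \tau(D) = (Q + \tau(Q)) + \phi^{-1}(\tau(Q^\chi) - Q^\chi) = P + 2(\tau(Q) - Q) = P,
$$
the last equality because $\tau(Q) - Q \in J[2]$.

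The only genuinely subtle step is the sign bookkeeping in this final computation: the minus sign in $D = Q - \phi^{-1}(Q^\chi)$ must cancel the minus from the twist relation so that the two $\phi^{-1}$ contributions collapse to $\phi^{-1}(\tau(Q^\chi) - Q^\chi)$, after which the matching identity kills the excess $(\tau(Q) - Q)$ against its twin. Everything else is standard cohomological manipulation, modeled on Kramer's elliptic-curve argument \cite[Proposition~7]{kramer}, with Proposition~\ref{canonicaliso} supplying the canonical $G_{K_v}$-equivariant identification $J[2] \cong J^\chi[2]$ that plays the role of the identification $E[2] = E^\chi[2]$ used in the elliptic case.
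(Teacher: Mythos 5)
Your proof is correct, but it takes a genuinely different route from the paper. The paper also gets the inclusion $\N J(L_v)/2J(K_v) \subseteq \alpha_v(1_v)\cap\alpha_v(\chi)$ from Lemma \ref{local}, but then establishes the reverse inclusion indirectly, by a pure dimension count: it introduces the four-term exact sequence
$$
0 \to M/2J(L_v) \to J(L_v)/2J(L_v) \xrightarrow{\;\N\;} J(K_v)/2J(K_v) \to J(K_v)/\N J(L_v) \to 0
$$
with $M = J(K_v)+J^\chi(K_v)+2J(L_v)$, identifies the first term with $\mathrm{res}_{L_v}(\alpha_v(1_v)+\alpha_v(\chi))$, and combines the orthogonality $d_2(A+B)+d_2(A\cap B)=d_2(H^1(K_v,J[2]))$ with two auxiliary computations (Lemma \ref{dim1}, which shows $\ker(\mathrm{res})\subseteq \alpha_v(1_v)+\alpha_v(\chi)$ and computes its dimension, and Lemma \ref{dim2}, a snake-lemma count) to conclude $d_2(A\cap B)=d_2(\N J(L_v)/2J(K_v))$. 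You instead produce, for each class in the intersection, an explicit norm preimage $D = Q - \phi^{-1}(Q^\chi)$ after normalizing the two Kummer cocycles to agree on the nose; your sign computation $(\phi^{-1})^\sigma=-\phi^{-1}$ and the final collapse to $2\tau(Q)=\tau(P)=P$ are correct, and the identification $\phi^{-1}|_{J^\chi[2]}$ with the canonical isomorphism of Proposition \ref{canonicaliso} is legitimate since the twist sign is trivial on $2$-torsion. Your approach is closer to Kramer's original argument: it is constructive and dispenses with Lemmas \ref{dim1} and \ref{dim2} entirely, at the cost of the careful cocycle normalization and sign bookkeeping you flag. The paper's counting argument trades that delicacy for linear-algebra bookkeeping and reuses the restriction/corestriction duality already set up for Lemma \ref{local}. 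Both are complete proofs.
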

The following two lemmas are used to prove the proposition.

\begin{lem}
\label{dim1}
Suppose that $\chi \in \Xset^2(K_v)$ is a nontrivial quadratic character, and $L_v:= \overline{K}_v^{\ker(\chi)}$. Let
$$
\phi: H^1(K_v, J[2]) \to H^1(L_v, J[2])
$$
be the restriction map. Then $(i) \text{ } \ker(\phi)\subseteq \alpha_v(1_v) + \alpha_v(\chi),$ and $(ii) \text{ } d_2(\ker(\phi)) = 2d_2(J(K_v)[2]) - d_2(J(L_v)[2])$.
\end{lem}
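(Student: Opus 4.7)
My plan is to handle (ii) via Inflation--Restriction and (i) by normalizing the cocycle and writing down explicit preimages in $\alpha_v(1_v)$ and $\alpha_v(\chi)$. Write $\Gal(L_v/K_v) = \langle \bar\tau \rangle$, fix a lift $\tau \in G_{K_v}$, and set $N := 1 + \tau$. For (ii), the Inflation--Restriction sequence identifies $\ker(\phi)$ with $H^1(\Gal(L_v/K_v), J(L_v)[2])$. Since the Galois group is cyclic of order two, this $H^1$ equals $\ker(1+\tau)/(\tau-1)J(L_v)[2]$; on $2$-torsion the operators $1+\tau$ and $\tau-1$ coincide, so the denominator is $NJ(L_v)[2]$, and the numerator is $J(L_v)[2]^{\tau} = J(K_v)[2]$. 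Applying rank--nullity to $N : J(L_v)[2] \to J(L_v)[2]$, whose kernel is $J(K_v)[2]$ and whose image is $NJ(L_v)[2]$, gives $d_2(NJ(L_v)[2]) = d_2(J(L_v)[2]) - d_2(J(K_v)[2])$, from which the formula in (ii) follows.

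For (i), I first normalize: given $c \in \ker(\phi)$, its restriction to $G_{L_v}$ is a coboundary $\sigma \mapsto \sigma P_0 - P_0$ for some $P_0 \in J[2]$, so after subtracting the global coboundary of $P_0$ I may assume $c|_{G_{L_v}} = 0$. The cocycle relation then forces $c$ to be determined by $P := c(\tau) \in J(K_v)[2]$. To construct $a \in \alpha_v(1_v)$ and $b \in \alpha_v(\chi)$ with $a + b = c$, I use the same $P$ on both sides: via the canonical identification $J(K_v)[2] \cong J^\chi(K_v)[2]$ from Proposition \ref{canonicaliso}, $P$ represents classes in both $J(K_v)/2J(K_v)$ and $J^\chi(K_v)/2J^\chi(K_v)$. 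Choosing any $R \in J(\overline{K}_v)$ with $2R = P$, the Kummer cocycle of $P \in J(K_v)$ is $a(\sigma) = \sigma R - R$; that of the corresponding element of $J^\chi(K_v)$, transported to $H^1(K_v, J[2])$ via the twist isomorphism $\psi : J^\chi \to J$ defined over $L_v$, works out to $b(\sigma) = \chi(\sigma)\sigma R - R$. A direct check then shows $a(\sigma) + b(\sigma) = (1+\chi(\sigma))\sigma R - P$, which equals $\sigma P - P = 0$ on $G_{L_v}$ and equals $-P = P$ on $\tau G_{L_v}$, matching $c$.

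The main technical obstacle I anticipate is the derivation of the twisted Kummer formula $b(\sigma) = \chi(\sigma)\sigma R - R$: it requires unwinding the cocycle relation $\psi^{\sigma} = \chi(\sigma)\psi$ defining the twist and tracking how the identification $J^\chi[2] \cong J[2]$ intertwines with the Kummer coboundary. Once that twist formula is in hand, both parts reduce to the elementary manipulations sketched above.
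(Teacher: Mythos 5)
Your proposal is correct and follows essentially the same route as the paper: both parts rest on Inflation--Restriction, with $\ker(\phi)$ identified with the classes $c_P$ for $P \in J(K_v)[2]$ and the identity $c_P = i(\overline{P}) + i^\chi(\overline{\psi(P)})$ verified via the twist isomorphism (the paper leaves this as "straightforward to check," whereas you carry out the Kummer-cocycle computation $b(\sigma) = \chi(\sigma)\sigma R - R$ explicitly). Part (ii) in the paper is the four-term exact sequence $0 \to J(K_v)[2] \to J(L_v)[2] \xrightarrow{\tau-1} J(K_v)[2] \to \ker(\phi) \to 0$, which is the same dimension count you perform with rank--nullity.
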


\begin{proof}
Let
\begin{align*}
i:& J(K_v)/2J(K_v) \to H^1(K_v, J[2]), \text{ and } \\
i^\chi:& J^{\chi}(K_v)/2J^{\chi}(K_v) \to H^1(K_v, J^{\chi}[2]) \cong H^1(K_v, J[2]).
\end{align*}
By the Inflation-Restriction Sequence,
$$
\ker(\phi) =  H^1(L_v/K_v, J(L_v)[2])
=  J(K_v)[2]/(\tau -1)J(L_v)[2],
$$
where $\Gal(L_v/K_v)$ is generated by $\tau$. Then for any $P \in J(K_v)[2]$, its image $c_P$ in $H^1(K_v, J[2])$ of the composition map
$$
J(K_v)[2] \to J(K_v)[2]/(\tau -1)J(L_v)[2] \subset H^1(K_v, J[2])
$$
is given by $c_P(\sigma) = P$ if $\sigma|_{L_v} = \tau$, and $c_P(\sigma) =0$ otherwise. With the isomorphism $J(\overline{K}_v) \cong J^\chi(\overline{K}_v)$, it is straightforward to check that $i(\overline{P}) + i^{\chi}(\overline{P^\chi}) = c_P$, where $\overline{P}$ and $\overline{P^\chi}$ represent the images of $P$ in $J(K_v)/2J(K_v)$ and $J^\chi(K_v)/2J^\chi(K_v)$, respectively. This proves (i). The exact sequence
$$
\xymatrix{
0 \ar[r] & J(K_v)[2] \ar[r] & J(L_v)[2] \ar^{\tau -1}[r] & J(K_v)[2] \ar[r] & \ker(\phi) \ar[r] &0
}
$$
shows (ii).
\end{proof}

\begin{lem}
\label{dim2}
Let $L_v/K_v$ be a nontrivial quadratic extension. Then
$$
2d_2(J(K_v)[2]) - d_2(J(L_v)[2])= 2d_2(J(K_v)/2J(K_v)) - d_2(J(L_v)/2J(L_v)).
$$
\end{lem}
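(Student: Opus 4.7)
\medskip
\noindent\textbf{Proof plan.} My approach is to deduce the identity from the local Euler characteristic formula for the abelian variety $J$ applied over both $K_v$ and $L_v$. Recall that for an abelian variety $A$ of dimension $g$ over a local field $F$ of characteristic zero one has
$$
d_2(A(F)/2A(F)) - d_2(A(F)[2]) \;=\; g \cdot e_F,
$$
where $e_F := -\log_2 \|2\|_F$ is defined via the normalized absolute value $\|\cdot\|_F$. Concretely, $e_F = [F{:}\Q_2]$ when $F/\Q_2$ is finite, $e_F = 0$ when $F$ is non-archimedean of residue characteristic different from $2$, $e_F = -1$ when $F = \R$, and $e_F = -2$ when $F = \C$.

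After rewriting the identity to be proved in the equivalent form
$$
2\bigl(d_2(J(K_v)[2]) - d_2(J(K_v)/2J(K_v))\bigr) \;=\; d_2(J(L_v)[2]) - d_2(J(L_v)/2J(L_v)),
$$
applying the Euler characteristic formula to $J$ (setting $g := \dim J$) on each side reduces the claim to the numerical identity $2g\cdot e_{K_v} = g\cdot e_{L_v}$, i.e.\ $e_{L_v} = 2 e_{K_v}$. This in turn is immediate from the multiplicativity $\|2\|_{L_v} = \|2\|_{K_v}^{[L_v:K_v]}$ of the normalized absolute value under any finite extension of local fields, applied to the quadratic extension $L_v/K_v$. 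One verifies it case by case ($\C/\R$; an unramified or ramified non-archimedean quadratic extension) and each case conforms.

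The only non-formal ingredient, and therefore the main obstacle, is the Euler characteristic formula itself. The standard proof (which I would cite rather than reproduce) exhibits an open subgroup of $A(F)$ of finite index that is a $\Z_\ell$-module of known rank: at non-archimedean places it comes from the formal group and is isomorphic to $\O_F^g$, and at archimedean places it is the identity component, isomorphic to the real torus $(\R/\Z)^{g[F:\R]}$. Applying the snake lemma to multiplication by $2$ on this open subgroup, and using that any finite abelian group $M$ satisfies $|M[2]|=|M/2M|$ so that the finite quotient contributes nothing to the difference of dimensions, yields the formula. This is due to Mattuck and is recorded in many sources (e.g.\ Tate's paper on Galois cohomology of local fields). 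Once this formula is granted, the lemma reduces to two lines of arithmetic.
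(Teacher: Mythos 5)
Your proof is correct, but it takes a genuinely different route from the paper. You derive the identity from the local Euler characteristic formula of Mattuck--Tate, $d_2(A(F)/2A(F)) - d_2(A(F)[2]) = g\,e_F$ with $e_F = -\log_2\|2\|_F$, applied to $J$ over $K_v$ and over $L_v$, so that everything reduces to the multiplicativity $e_{L_v} = [L_v:K_v]\,e_{K_v} = 2e_{K_v}$ of the normalized absolute value; your case-by-case values of $e_F$ (including both archimedean cases and residue characteristic $2$) are right, and the reduction is exactly as you say. The paper instead writes down the four-term exact sequence $0 \to J^\chi(K_v) \to J(L_v) \xrightarrow{N} J(K_v) \to J(K_v)/N(J(L_v)) \to 0$, splits it into two short exact sequences, applies the snake lemma to multiplication by $2$ on each, and then cancels the contributions of the twist using Proposition \ref{canonicaliso} (which gives $d_2(J^\chi(K_v)[2]) = d_2(J(K_v)[2])$) and Remark \ref{twistselfdual} (which gives $d_2(J^\chi(K_v)/2J^\chi(K_v)) = d_2(J(K_v)/2J(K_v))$, both being half of $d_2(H^1(K_v,J[2]))$). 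Your argument is shorter and does not need the twist $J^\chi$ or Tate local duality at all, at the price of importing the Euler characteristic formula (i.e.\ the formal-group/identity-component structure of $A(F)$) as an external input; the paper's argument stays entirely inside the toolkit it has already built (the canonical identification of $2$-torsion of twists and the self-duality of Kummer images), which is why it is phrased the way it is. Either proof is acceptable here.
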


\begin{proof}
Consider the following diagram
$$
\xymatrix{
0 \ar[r] & J^\chi(K_v) \ar[r] \ar[d]^2 & J(L_v) \ar^N[r] \ar[d]^2 & J(K_v)  \ar[r] \ar^{2}[d]& J(K_v)/N(J(L_v)) \ar[r] \ar[d]^2 & 0 \\
0 \ar[r] & J^\chi(K_v) \ar[r] & J(L_v) \ar^N[r] & J(K_v) \ar[r] & J(K_v)/N(J(L_v)) \ar[r] & 0.
}
$$
Note that each row consists of two short exact sequences
$$
\xymatrix@R=5pt@C=15pt{
0 \ar[r] & J^\chi(K_v) \ar[r] & J(L_v) \ar^N[r]
    & NJ(L_v) \ar[r] & 0, \text{ and }  \\
0 \ar[r] & NJ(L_v) \ar[r] & J(K_v) \ar[r] & J(K_v)/N(J(L_v)) \ar[r] & 0.
}
$$
Then applying the snake lemma for each short exact sequence and comparing the dimensions over $\Ftwo$ together with Proposition \ref{canonicaliso} and Remark \ref{twistselfdual} show the result.
\end{proof}

\begin{proof}[Proof of Proposition $2.17$]
If $L_v/K_v$ is trivial, then it is obvious. Thus assume $L_v/K_v$ is a nontrivial quadratic extension.
By Lemma \ref{local}, it is enough to show that
\begin{equation}
\label{goodgood}
d_2(\alpha_v(1_v) \cap \alpha_v(\chi)) = d_2(\N J(L_v)/2J(K_v)).
\end{equation}
Consider the following exact sequence $(M:=J(K_v) + J^\chi(K_v) + 2J(L_v))$
\[
\xymatrixrowsep{1in}
\xymatrix@R=5pt@C=15pt{ 0 \ar[r] & M/2J(L_v) \ar[r] & J(L_v)/2J(L_v) \ar^{\N}[r] & J(K_v)/2J(K_v) \ar[r] & J(K_v)/N(J(L_v)) \ar[r] & 0,
}
\]
where the middle map $\N$ is induced by the norm map and $J^\chi(K_v)$ is regarded as a subgroup of $J(L_v)$. For simplicity, write $X, Y, Z$ and $W$ for the nontrivial terms in the exact sequence in order. Let $A = \alpha_v(1_v),$ and $B = \alpha_v(\chi)$. Then $d_2(A + B) + d_2(A \cap B) = d_2(H^1(K_v, J[2]))$ since $A\cap B$ is the orthogonal complement of $A+B$ in \eqref{tld}.
Let
$$
\phi : H^1(K_v, J[2]) \to H^1(L_v, J[2])
$$
be the restriction map. We have $X = \phi(A+B)$, so by Remark \ref{twistselfdual}, Lemma \ref{dim1}, and Lemma \ref{dim2},
\begin{align*}
d_2(X) &= d_2(A + B) - d_2(\ker(\phi))\\
& =  d_2(H^1(K_v, J[2])) - d_2(A \cap B) - d_2(\ker(\phi)) \\
&= d_2(H^1(K_v, J[2])) - d_2(A \cap B) - 2d_2(J(K_v)[2]) + d_2(J(L_v)[2]) \\
&= d_2(J(L_v)/2J(L_v)) - d_2(A \cap B).
\end{align*}
Then the equality $d_2(X) + d_2(Z) = d_2(Y) + d_2(W)$ shows \eqref{goodgood}, as desired.
\end{proof}

For the following lemma, we specify $v_0$ so that $K_{v_0} = \R$.

\begin{lem}
\label{reallocal}
Let $C_{2,f}$ be a hyperelliptic curve over the real field $\R$. Let $\eta$ be the sign character. Suppose that $f$ has $2k_1 -1$ real roots and $2k_2$ complex roots. Then
\begin{enumerate}
\item
$J(\R) \cong (\R/\Z)^{k_1 + k_2 -1} \oplus (\Z/2\Z)^{k_1-1}, \text{ and }$
\item
$h_{v_0}(\eta) = k_1 - 1$
\end{enumerate}
\end{lem}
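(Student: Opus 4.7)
The strategy is to determine the real Lie group structure of $J(\R)$ from the cycle structure of complex conjugation acting on the roots of $f$, and then extract $h_{v_0}(\eta)$ from Proposition~\ref{localintersection}. The one structural input I will take for granted is the classical fact that for any abelian variety $A$ of dimension $g$ over $\R$, one has a (non-canonical) decomposition $A(\R) \cong (\R/\Z)^g \oplus (\Z/2\Z)^s$ for some $s \geq 0$: the identity component is a real torus of dimension $g$, and the component group $A(\R)/A(\R)^0$ is an elementary abelian $2$-group, which splits off because $(\R/\Z)^g$ is divisible. This reduces (i) to the two numerical computations $\dim J = k_1 + k_2 - 1$ and $s = k_1 - 1$.

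The dimension of $J$ is the genus of $C_{2,f}$, which equals $(n-1)/2 = k_1 + k_2 - 1$ since $n = \deg f = (2k_1 - 1) + 2k_2$ is odd. To pin down $s$, observe that $J(\R) \cong (\R/\Z)^g \oplus (\Z/2\Z)^s$ gives $J(\R)[2] \cong (\Z/2\Z)^{g+s}$, so $s = d_2(J(\R)[2]) - g$. Complex conjugation $c$ generates $G_\R$ and, via its image in $\Gal(f) \subseteq S_n$ (see Remark~\ref{snaction}), fixes the $2k_1 - 1$ real roots and permutes the $2k_2$ complex roots as $k_2$ transpositions; the total number of $c$-orbits is therefore $b = 2k_1 - 1 + k_2$. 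Since $J(\R)[2] = J[\pi]^{c=1}$, Lemma~\ref{cycle} gives $d_2(J(\R)[2]) = b - 1 = 2k_1 + k_2 - 2$, whence $s = k_1 - 1$. This establishes (i).

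For (ii), Proposition~\ref{localintersection} yields $\alpha_{v_0}(1_{v_0}) \cap \alpha_{v_0}(\eta) = \N J(\C)/2J(\R)$, with $\N : J(\C) \to J(\R)$, $P \mapsto P + \bar P$. I claim $\N J(\C) = J(\R)^0$: the inclusion $\N J(\C) \subseteq J(\R)^0$ follows since $J(\C)$ is connected and $\N$ is a continuous homomorphism sending $0$ to $0$, while the reverse inclusion uses divisibility of $J(\R)^0$ (as a real torus) to write any $Q \in J(\R)^0$ as $2P$ with $P \in J(\R)^0 \subseteq J(\C)$, giving $Q = \N P$. Combining with part (i), which implies $J(\R)^0 = 2J(\R)$ (because $J(\R)^0$ is $2$-divisible and $J(\R)/J(\R)^0$ has exponent $2$), the intersection vanishes, so $h_{v_0}(\eta) = d_2(\alpha_{v_0}(1_{v_0})) = d_2(J(\R)/2J(\R)) = s = k_1 - 1$. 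The one potential obstacle is justifying the exponent-$2$ nature of $J(\R)/J(\R)^0$; I am citing it as classical, but a direct proof is available from the description of $J(\C)$ as a complex torus with antiholomorphic involution.
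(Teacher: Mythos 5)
Your proof is correct and follows essentially the same route as the paper: both rest on the classical real structure theorem $J(\R)\cong(\R/\Z)^g\oplus(\Z/2\Z)^s$ (the paper cites Milne, Remark I.3.7), use Lemma \ref{cycle} applied to complex conjugation to count $d_2(J(\R)[2])$, and deduce (ii) from Proposition \ref{localintersection} together with the identification of $\N J(\C)$ with the identity component. The only difference is that you prove the norm-image claim directly (via connectedness and divisibility) where the paper again cites Milne; this is a harmless elaboration.
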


\begin{proof}
Complex conjugation corresponds to the element of $S_n$ that consists of $2k_1-1$ cycles of length $1$ and $k_2$ cycles of length $2$. Hence \cite[Remark I.3.7]{milne} and Lemma \ref{cycle} show (i). Note that the image of the norm map $\N: J(\C) \to J(\R)$ is the the connected component of $0$ ($=(\R/\Z)^{k_1 + k_2 -1}$) according to \cite[Remark I.3.7]{milne}. Then (ii) follows from Proposition \ref{localintersection}.
\end{proof}



\section{Selmer groups and controlling the localization map}
Let $C_{p,f}$ be a superelliptic curve over a number field $K$ containing $\zeta_p$, where $n = \deg(f)$ is not divisible by $p$. Let $v$ be a place of $K$. We write $K_v$ for the completion of $K$ at $v$. For the rest of the paper, for every prime $\l$, we fix an embedding $\overline{K} \hookrightarrow \overline{K}_{\l}$, so that $G_{K_{\l}} \subset G_K$. As usual, we write $J$ for the Jacobian of $C_{p,f}$. Recall that $\Gal(f) \in S_n$ has a natural action on $J[\pi]$ (Remark \ref{snaction}).

\begin{defn}
Let $\chi \in \Xset^p(K)$. The $\pi$-Selmer group $\Sel_\pi(J^\chi/K) \subset H^1(K,J[\pi])$ is the (finite)
$\F_p$-vector space defined by the following exact sequence
$$
0 \too \Sel_\pi(J^\chi/K) \too H^1(K,J[\pi]) \too \dirsum{v}H^1(K_v,J[\pi])/\alpha_v(\chi_v),
$$
where the rightmost map is the direct sum of the restriction maps, and $\chi_v$ is the restriction of $\chi$ to $G_{K_v}$.
\end{defn}

\begin{lem}
\label{cohcom}
Suppose that $p \nmid n$. Then $H^1(S_n, J[\pi]) = 0$.
\end{lem}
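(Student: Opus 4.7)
The plan is to identify $J[\pi]$ with a concrete $S_n$-module and then compute its cohomology via Shapiro's lemma. By Remarks \ref{action} and \ref{snaction}, $J[\pi]$ is spanned by the divisor classes $a_i = [(\alpha_i,0)-\infty]$ ($1 \leq i \leq n$), permuted by $S_n$, and subject to the single relation $\sum_{i=1}^n a_i = 0$. Hence, letting $V = \bigoplus_{i=1}^n \Fp e_i$ be the natural $n$-dimensional permutation representation of $S_n$ and $T = \Fp\cdot(e_1 + \cdots + e_n) \subset V$ its trivial submodule, the assignment $e_i \mapsto a_i$ identifies $J[\pi] \cong V/T$ as $\Fp[S_n]$-modules.

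Next, I would exploit the hypothesis $p \nmid n$ to split off $T$. The composite $T \hookrightarrow V \twoheadrightarrow \Fp$ given by summing coordinates is multiplication by $n$ on $\Fp$, hence invertible. Consequently $V = V_0 \oplus T$ as $\Fp[S_n]$-modules, where $V_0$ is the augmentation kernel; in particular $J[\pi] \cong V/T \cong V_0$ is realized as a direct summand of $V$. This produces the decomposition
$$
H^1(S_n, V) \;\cong\; H^1(S_n, J[\pi]) \oplus H^1(S_n, T),
$$
reducing the claim to a pair of dimension counts.

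To handle the two known terms, observe that $V = \mathrm{Ind}_{S_{n-1}}^{S_n}\Fp$, with $S_{n-1}$ the stabilizer of $e_1$, so Shapiro's lemma gives $H^1(S_n, V) \cong H^1(S_{n-1}, \Fp) = \Hom(S_{n-1}, \Fp)$, while $H^1(S_n, T) = \Hom(S_n, \Fp)$. When $p$ is odd, both $\Hom$ groups vanish, since the abelianization of any symmetric group is either trivial or $\Z/2\Z$, and then the direct sum decomposition forces $H^1(S_n, J[\pi]) = 0$ immediately. When $p = 2$, the hypothesis $p \nmid n$ forces $n$ odd (and hence $n \geq 3$, since $\deg f \geq 2$), so $\Hom(S_{n-1}, \Ftwo)$ and $\Hom(S_n, \Ftwo)$ are both one-dimensional, each generated by the sign character; equality of total dimensions in the decomposition again gives $H^1(S_n, J[\pi]) = 0$. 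There is no real obstacle here, but the one slightly delicate point is the $p = 2$ case, where one must verify that the Shapiro side and the $T$ side of the decomposition have \emph{equal} nonzero dimensions rather than each vanishing on its own.
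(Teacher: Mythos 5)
Your proof is correct and follows essentially the same route as the paper's: identify $J[\pi]$ with the quotient of the permutation module $\mathrm{Ind}_{S_{n-1}}^{S_n}\Fp$ by its trivial submodule, use $p\nmid n$ to split off that submodule, apply Shapiro's lemma, and finish with the dimension count (which, as you note, is only nontrivial when $p=2$). No discrepancies.
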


\begin{proof}
We regard $S_n$ as the symmetric group on the set $\{1,2,\cdots, n\}$. Let $H:=\{\sigma \in S_n\mid \sigma(n) = n\} \cong S_{n-1} \subset S_n$. We have an $S_n$-module isomorphism
$$
\mathrm{Ind}_{H}^{S_{n}}\Fp \cong \Fp[S_n/H]
$$
where $\mathrm{Ind}_{H}^{S_{n}}\Fp$ denote the induced $S_n$-module of the $H$-module $\Fp$ (trivial action of $H$ on $\Fp$).
Let $\beta_i$ be ($i$ $n$) in $\Fp[S_n/H]$. Note that $\sigma(\beta_i) = \beta_{\sigma(i)}$ in $S_n/H$. We put
$$
D =\{a(\beta_1 + \beta_2 + \cdots + \beta_n) \mid a \in \Fp \}.
$$
Then
$$
\mathrm{Ind}_{H}^{S_{n}}\Fp /D \cong \Fp[S_n/H]/D \cong J[\pi].
$$
In the last isomorphism the map is defined by $\beta_i \mapsto \alpha_i$ where $\alpha_i$ are the roots of polynomial of the given superelliptic curve. We have an ($S_n$-module) exact sequence
\begin{equation}
\label{exactseq}
\xymatrix@R=5pt@C=15pt{
0 \ar[r] & D \ar^-{j}[r] & \mathrm{Ind}_{H}^{S_{n}}\Fp \ar[r] & \mathrm{Ind}_{H}^{S_{n}}\Fp/D \ar[r]  & 0
}
\end{equation}
But since $p\nmid n$, we have a map
$$
g : \mathrm{Ind}_{H}^{S_{n}}\Fp \too D
$$
defined by $a_1\beta_1 + \cdots + a_n\beta_n \mapsto n^{-1}(a_1+\cdots + a_n)(\beta_1 + \cdots \beta_n)$ where $n^{-1}$ is taken in $(\Z/p\Z)^\times$. Clearly $ g\circ j =\text{id}_D$, so the exact sequence \eqref{exactseq} splits. Hence
$$
H^1(S_n, \mathrm{Ind}_H^{S_n}\Fp) \cong H^1(S_n, D) \oplus H^1(S_n, J[\pi]).
$$
By Shapiro's lemma, $H^1(S_n, \mathrm{Ind}_H^{S_n}\Fp) \cong H^1(H, \Fp)$. If $p$ is odd, $H^1(H, \Fp) = \Hom(H, \Fp) = 0$. If $p=2$ (so $n \ge 3$), $H^1(H, \Ftwo) = \Z/2\Z = \Hom(S_n, D) = H^1(S_n, D)$. In either case, we have $H^1(S_n, J[\pi]) = 0$.
\end{proof}

\begin{defn}
\label{star}
We say that $C_{p,f}$ satisfies ($\ast$) if one of the following holds.
\begin{itemize}
\item
$p=2$, $\Gal(f) \cong A_n$ or $S_n$, and $n \ge 5$.
\item
$p=2, n=3$, and $\Gal(f) \cong S_3$.
\item
$p$ is an odd prime, and $\Gal(f) \cong S_n$.
\end{itemize}
\end{defn}

If $c \in H^1(K, J[\pi])$ and $\sigma \in G_K$, let
$$
c(\sigma) \in J[\pi]/(\sigma - 1)J[\pi]
$$
denote the image of $\sigma$ under any cocycle representing $c$.

\begin{lem}
\label{3.5}
Let $N$ be a finite subgroup of $H^1(K, J[\pi])$ and $\sigma \in G_K$. Suppose that $\phi : N \too J[\pi]/(\sigma - 1)J[\pi]$ is a homomorphism.
Suppose that $\Gal(K(J[\pi])/K) = \Gal(f) = \Omega$ satisfies the following conditions.
\begin{enumerate}
 \item
 $H^{1}(\Omega, J[\pi])$ = $0$,
 \item
 $J[\pi]$ is a simple $\Omega$-module,
 \item
 $\dim_{\Fp}(\Hom_{\Omega}(J[\pi], J[\pi])) = 1$, \text{ and }
 \item
 $\Omega$ does not act on $J[\pi]$ trivially.
  \end{enumerate}
Then there exists an element $\rho \in G_K$ such that $\rho \mid _{K(J[\pi])K^{ab}} = \sigma \mid _{K(J[\pi])K^{ab}}$ and $c(\rho) = \phi(c)$ for all $c \in N$.  In particular, if $C_{p,f}$ satisfies $(\ast)$, then there exists such an element $\rho \in G_K$.
\end{lem}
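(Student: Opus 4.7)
The plan is to write $\rho = \sigma\tau$ for a suitable $\tau \in G_{FK^{\ab}}$ with $F := K(J[\pi])$, and to realize the prescribed cocycle values by controlling the restriction of $\tau$ to an auxiliary finite extension of $F$. Granting such a $\tau$, the cocycle identity gives
\[
c(\rho) \;=\; c(\sigma) + \sigma\, c(\tau) \;\equiv\; c(\sigma) + c(\tau) \pmod{(\sigma-1)J[\pi]}
\]
for every $c \in N$ (since $\sigma$ acts trivially on $J[\pi]/(\sigma-1)J[\pi]$), so choosing $\tau$ with $c_i(\tau)$ equal to any fixed lift of $\phi(c_i) - c_i(\sigma)$ for a basis $c_1,\dots,c_m$ of $N$ yields $c(\rho) \equiv \phi(c) \pmod{(\sigma-1)J[\pi]}$. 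Because $\tau \in G_{FK^{\ab}}$, the condition $\rho|_{FK^{\ab}} = \sigma|_{FK^{\ab}}$ is automatic.

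To produce $\tau$, first use hypothesis (i) and inflation-restriction (noting that $G_F$ acts trivially on $J[\pi]$) to obtain an injection $N \hookrightarrow \Hom(G_F^{\ab}, J[\pi])^\Omega$. Package the images into a single $\Omega$-equivariant homomorphism $\psi = (c_1|_F, \dots, c_m|_F) \colon G_F \to J[\pi]^{\oplus m}$, and let $L$ be the fixed field of $\ker\psi$, so that $\Gal(L/F) \hookrightarrow J[\pi]^{\oplus m}$ as $\Omega$-submodules. The key claim is that $\Gal(L/F) = J[\pi]^{\oplus m}$. By hypotheses (ii) and (iii), a Schur/Morita-style argument classifies the $\Omega$-submodules of $J[\pi]^{\oplus m}$ as $W \otimes_{\Fp} J[\pi]$ for subspaces $W \subseteq \Fp^m$; if $W$ were proper, a nonzero $(a_i) \in W^\perp$ would give $\sum a_i c_i|_F = 0$ in $\Hom(G_F, J[\pi])$, contradicting the injection just produced.

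Next, $L$ and $FK^{\ab}$ are linearly disjoint over $F$: since $G_K$ acts trivially on its abelianization, $\Omega$ acts trivially on $\Gal(FK^{\ab}/F)$, so the image of the $\Omega$-equivariant restriction $\Gal(L/F) \to \Gal(L \cap FK^{\ab}/F)$ must lie in $(J[\pi]^{\oplus m})^\Omega = 0$, where the vanishing uses (ii) and (iv). Disjointness combined with the preceding paragraph yields a surjection $G_{FK^{\ab}} \twoheadrightarrow \Gal(L/F) = J[\pi]^{\oplus m}$, producing the desired $\tau$. For the final sentence, one verifies that $(\ast)$ implies (i)--(iv): (i) is Lemma \ref{cohcom} for the $S_n$ case and a parallel calculation for $A_n$, while (ii)--(iv) follow from the representation theory of the standard $(n-1)$-dimensional representation of $S_n$ (or $A_n$) on $J[\pi]$. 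The main delicate step is confirming simplicity and triviality of the endomorphism algebra in the small-$n$ and $p=2$ subcases of $(\ast)$, which must be handled by hand.
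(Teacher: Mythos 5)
Your argument is correct and is essentially the same as the paper's: the paper simply invokes the proof of Lemma 3.5 of Mazur--Rubin (\emph{op.\ cit.}), which is exactly the restriction-to-$G_F$, Schur/Morita, and linear-disjointness argument you reconstruct (one small imprecision: the image of $\Gal(L/F)$ in the trivial $\Omega$-module $\Gal(L\cap FK^{\ab}/F)$ is a quotient of the \emph{coinvariants} rather than a subgroup of the invariants, though both vanish for a simple nontrivial module, so the conclusion stands). For the final clause, the paper is likewise brief on (ii)--(iv) but does spell out the one nontrivial point you defer, namely $H^1(A_n,J[2])=0$, via Hochschild--Serre from $H^1(S_n,J[2])=0$ together with the fact that a nontrivial $2$-group acting on a nontrivial $2$-group has nontrivial fixed points.
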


\begin{proof}
The proof of Lemma 3.5 of \cite{hilbert} works here, too. For the last assertion, it is not difficult to check that (ii), (iii), and (iv) are satisfied when $C_{p,f}$ satisfies $(\ast)$. If $\Gal(f) \cong S_n$, the condition (i) is Lemma \ref{cohcom}. If $p=2$, and $\Gal(f) \cong A_n$, then $J[2]^{A_n = 1} = 0$. The Hochschild-Serre Spectral Sequence (for example, see \cite[Proposition 1.6.7]{cohomology}) together with the fact that $H^1(S_n, J[2]) =0$ shows that
$H^1(A_n, J[2])^{S_n/A_n = 1} = 0$. Then $H^1(A_n, J[2]) = 0$ by the following fact that can be proved by a standard argument of group theory:
If $U, V$ are nontrivial $2$-groups such that $U$ acts on $V$, then $V^U$ (:=the group of elements in $V$ fixed by every element in $U$) is non-trivial.
\end{proof}

\begin{rem}
Note that if $C_f$ is an elliptic curve, and $\Gal(f) \cong A_3$, the condition (iii) in Lemma \ref{3.5} does not hold.
\end{rem}

\begin{defn}
\label{locmap}
For every place $v$ of $K$, we write $\mathrm{res}_v$ for the restriction map (fixing an embedding $\overline{K} \hookrightarrow \overline{K}_v$)
$$
\mathrm{res}_v: H^1(K, J[\pi]) \to H^1(K_v, J[\pi]).
$$
Suppose that $J$ has good reduction at $\l\nmid p\infty$. Define the localization map
$$
\mathrm{loc}_\l : \Sel_\pi(J/K) \to \alpha_\l(1_\l) \cong J[\pi]/(\Frob_\l -1),
$$
where the former map is $\mathrm{res}_\l$ and the latter map given in Lemma \ref{hardtofindref}. Note that $\mathrm{loc}_\l$ is given by evaluating cocycles at a Frobenius automorphism $\Frob_\l$.
\end{defn}

We define various Selmer groups as follows.

\begin{defn}
Let $\l$ be a prime of $K$ and $\psi_\l \in \Xset^p(K_\l)$. Define
\begin{align*}
\Sel_{\pi}(J, \psi_\l) := \{x \in H^1(K, J[\pi])|& \mathrm{res}_v(x) \in \alpha_v(1_v) \text{ if }v \neq \l, \text{ and }\\
 &\mathrm{res}_\l(x) \in \alpha_\l(\psi_\l)  \}.
\end{align*}
Define
\begin{align*}
\Sel_{\pi, \l}(J/K) := \{x \in H^1(K, J[\pi])|& \mathrm{res}_v(x) \in \alpha_v(1_v) \text{ if }v \neq \l, \text{ and }\\
 &\mathrm{res}_\l(x) = 0   \}.
\end{align*}
Define
$$
\Sel_{\pi}^{\l}(J/K) := \ker(H^1(K, J[\pi]) \to \bigoplus_{v \neq \q} H^1(K_v, J[\pi])/\alpha_v(1_v)).
$$
\end{defn}
Obviously, $ \Sel_{\pi, \l}(J/K) \subseteq \Sel_{\pi}(J/K) \subseteq \Sel_{\pi}^{\l}(J/K)$.

\begin{lem}
\label{ptd}
The right hand restriction maps of the following exact sequences are orthogonal complements under \eqref{tld}.
$$
\xymatrix@R=5pt@C=15pt{
0 \ar[r] & \Sel_\pi(J/K) \ar[r] & \Sel_\pi^\l(J/K) \ar[r]
    & H^1(K_v,J[\pi])/\alpha_\l(1_\l),  \\
0 \ar[r] & \Sel_{\pi, \l}(J/K) \ar[r] & \Sel_\pi(J/K) \ar[r] & \alpha_\l(1_\l).
}
$$
In particular, $d_p(\Sel_\pi^\l(J/K)) - d_p(\Sel_{\pi, \l}(J/K)) = d_p(\alpha_\l(1_\l)) = \frac{1}{2}d_p(H^1(K_\l, J[\pi])).$
\end{lem}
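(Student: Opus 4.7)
The plan is as follows. First, both displayed sequences are exact by unraveling the definitions: $\Sel_{\pi,\l}(J/K)$ consists by definition of the classes in $\Sel_\pi(J/K)$ whose restriction at $\l$ vanishes, so it is the kernel of the restriction map $\Sel_\pi(J/K) \to \alpha_\l(1_\l)$; and $\Sel_\pi(J/K)$ consists of the classes in $\Sel_\pi^\l(J/K)$ whose restriction at $\l$ additionally lands in $\alpha_\l(1_\l)$, so it is the kernel of the composite $\Sel_\pi^\l(J/K) \to H^1(K_\l,J[\pi]) \to H^1(K_\l,J[\pi])/\alpha_\l(1_\l)$. By Lemma \ref{selfdual}, $\alpha_\l(1_\l)$ is its own orthogonal complement in $H^1(K_\l, J[\pi])$ under the local pairing \eqref{tld}, so the latter descends to a nondegenerate pairing between $\alpha_\l(1_\l)$ and $H^1(K_\l,J[\pi])/\alpha_\l(1_\l)$. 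Writing $A \subseteq H^1(K_\l,J[\pi])/\alpha_\l(1_\l)$ and $B \subseteq \alpha_\l(1_\l)$ for the two images of interest, the content of the lemma reduces to the assertion $A = B^\perp$.

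The easy containment $A \subseteq B^\perp$ follows directly from Poitou--Tate global reciprocity. For any $c \in \Sel_\pi^\l(J/K)$ and $c' \in \Sel_\pi(J/K)$, the sum of local pairings $\sum_v \langle \mathrm{res}_v c,\mathrm{res}_v c'\rangle_v$ vanishes. At every $v \neq \l$, the classes $\mathrm{res}_v c$ and $\mathrm{res}_v c'$ both lie in $\alpha_v(1_v)$ by the defining local conditions of $\Sel_\pi^\l$ and $\Sel_\pi$, and $\alpha_v(1_v)$ is self-orthogonal by Lemma \ref{selfdual}; hence those terms all vanish. Only the $v=\l$ term survives, yielding $\langle \mathrm{res}_\l c,\mathrm{res}_\l c'\rangle_\l = 0$ for every such pair, i.e.\ $A \subseteq B^\perp$.

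The reverse inclusion $B^\perp \subseteq A$ is the main (and essentially the only) obstacle, and the plan is to extract it from the full Poitou--Tate global duality theorem rather than only the sum-of-invariants relation. Because $J[\pi]$ is self-dual via $e_\pi$ and every local condition $\alpha_v(1_v)$ is self-orthogonal (Lemma \ref{selfdual}), the Selmer system $\{\alpha_v(1_v)\}_v$ coincides with its Cartier dual, and Poitou--Tate supplies an exact sequence of the Greenberg--Wiles shape $H^1(K,J[\pi]) \to \bigoplus_v H^1(K_v,J[\pi])/\alpha_v(1_v) \to \Sel_\pi(J/K)^\vee \to 0$. Given $x$ in $H^1(K_\l,J[\pi])/\alpha_\l(1_\l)$ annihilating $B$, view $x$ as a global element of the middle term supported only at $\l$; its image in $\Sel_\pi(J/K)^\vee$ is the functional $c' \mapsto \langle x,\mathrm{res}_\l c'\rangle_\l$, which vanishes by hypothesis. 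Hence $x$ lifts to a global class $c \in H^1(K,J[\pi])$ with $\mathrm{res}_v c \in \alpha_v(1_v)$ for all $v \neq \l$, i.e.\ $c \in \Sel_\pi^\l(J/K)$, and $\mathrm{res}_\l(c)$ maps to $x$, giving $x \in A$.

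The numerical identity then follows by dimension counting. The two short exact sequences give $d_p(\Sel_\pi^\l(J/K)) - d_p(\Sel_\pi(J/K)) = d_p(A)$ and $d_p(\Sel_\pi(J/K)) - d_p(\Sel_{\pi,\l}(J/K)) = d_p(B)$, so adding, $d_p(\Sel_\pi^\l(J/K)) - d_p(\Sel_{\pi,\l}(J/K)) = d_p(A) + d_p(B)$. The orthogonality $A = B^\perp$ under the perfect pairing on $\alpha_\l(1_\l) \times H^1(K_\l,J[\pi])/\alpha_\l(1_\l)$ forces $d_p(A) + d_p(B) = d_p(\alpha_\l(1_\l))$, and the final equality $d_p(\alpha_\l(1_\l)) = \tfrac{1}{2}d_p(H^1(K_\l,J[\pi]))$ is Remark \ref{twistselfdual}.
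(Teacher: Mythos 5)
Your proof is correct and follows essentially the same route as the paper: the paper's proof is a one-line citation of global Poitou--Tate duality (in the form of \cite[Theorem 1.7.3]{EulerSystems}, which is exactly the statement that these two restriction images are orthogonal complements), and your argument is a correct unpacking of the proof of that cited result, using global reciprocity for the easy inclusion and the nine-term Poitou--Tate sequence together with the self-duality of $J[\pi]$ and Lemma \ref{selfdual} for the reverse inclusion.
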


\begin{proof}
The lemma follows from the Global Poitou-Tate Duality. For example, see \cite[Theorem 1.7.3]{EulerSystems}.
\end{proof}

\section{$\pi$-Selmer ranks of Jacobians of superelliptic curves}
 We continue to assume that $C_{p,f}$ is a superelliptic curve over a number field $K$ containing $\zeta_p$. In this section, we assume that $p$ is a prime and $\Gal(f) \cong S_n,$ where $n = \deg(f)$ is not divisible by $p$. Let $\Sigma$ denote a (finite) set of places which contains all places where $J$ has bad reduction, all archimedean places, and all primes above $p$. We enlarge $\Sigma$, if necessary, so that $\Pic(\O_{K, \Sigma}) = 0$. As before, we write $J$ for the Jacobian of $C_{p,f}$.  In this section, we prove that if $p$ is an odd prime, there exist infinitely many $p$-twists of $J$ whose Jacobians have $\pi$-Selmer ranks equal to any non-negative integer $r$.

\begin{rem}
Note that our $p$-twists (Definition \ref{definitionofquadratictwist}) of $J$ over $K$ are also Jacobians of superelliptic curves over $K$ and that a $p$-twist of a $p$-twist is still a $p$-twist. This enables us to use an inductive argument. For example, if we have an algorithm to construct a $p$-twist of the Jacobian of a superelliptic curve having a strictly bigger $\pi$-Selmer group than the original $\pi$-Selmer group, we can make the $\pi$-Selmer group as big (the dimension over $\Fp$) as we want by taking $p$-twists.
\end{rem}

\begin{prop}
\label{superelliptic}
Suppose that $K$ is a number field containing $\zeta_p$, and $f \in K[x]$ is a separable polynomial. Let $n = \deg(f)$ and suppose that $p\nmid n$ is an odd prime and $\Gal(f) \cong S_n$. Let $J:= J(C_{p,f})$. Suppose that $d_p(\Sel_\pi(J/K)) \ge 1$. Then there exist infinitely many $p$-twists $J^\chi$ such that $d_p(\Sel_\pi(J^\chi/K)) = d_p(\Sel_\pi(J/K)) - 1$.
\end{prop}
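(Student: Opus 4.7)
The plan is to implement the Mazur--Rubin style ``twist by a single auxiliary prime'' argument: I want to find a single prime $\l \notin \Sigma$ of good reduction and a character $\chi \in \Xset^p(K)$ ramified only at $\l$ so that the defining local conditions of $\Sel_\pi(J/K)$ and $\Sel_\pi(J^\chi/K)$ inside $H^1(K, J[\pi])$ agree at every $v \neq \l$ and are complementary one-dimensional subspaces at $\l$. A careful Poitou--Tate accounting will then force $d_p(\Sel_\pi(J^\chi/K)) = d_p(\Sel_\pi(J/K)) - 1$.

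First, fix a nonzero class $c \in \Sel_\pi(J/K)$ and a permutation $\sigma \in S_n$ with exactly two orbits on $\{1,\dots,n\}$ (for instance, an $(n-1)$-cycle, possible since $n \ge 2$). By Lemma \ref{cycle} and the $\sigma-1$ exact sequence, both $J[\pi]^{\sigma=1}$ and $J[\pi]/(\sigma-1)J[\pi]$ are one-dimensional over $\F_p$. Since $\Gal(f) \cong S_n$ with $p$ odd, hypothesis $(\ast)$ holds, so Lemma \ref{3.5} applies with $N = \langle c \rangle$ and any surjective homomorphism $\phi \colon N \to J[\pi]/(\sigma-1)J[\pi]$; this yields $\rho \in G_K$ whose restriction to $K(J[\pi])$ realises $\sigma$ and for which $c(\rho) = \phi(c) \ne 0$. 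A Chebotarev argument, applied in a compositum of $K(J[\pi])$ with an appropriate finite ray class extension of $K$ chosen so that the splitting of $\l$ is controlled at each $v \in \Sigma$, then produces infinitely many primes $\l \notin \Sigma$ at which $J$ has good reduction, $\Frob_\l$ acts on $J[\pi]$ as $\sigma$ (so $\alpha_\l(1_\l)$ is one-dimensional by Lemma \ref{dcyc}), and $\mathrm{loc}_\l(c) = c(\Frob_\l) \ne 0$. The assumption $\Pic(\O_{K,\Sigma}) = 0$ then guarantees, for each such $\l$, the existence of a character $\chi \in \Xset^p(K)$ ramified only at $\l$ and trivial on $K_v^{\times}$ for every $v \in \Sigma$.

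Next, I would compare local conditions. For $v \in \Sigma$ we have $\chi_v = 1_v$, giving $\alpha_v(\chi_v) = \alpha_v(1_v)$ trivially; for $v \notin \Sigma \cup \{\l\}$, $\chi_v$ is unramified at a prime of good reduction, and Lemma \ref{localzero} again gives $\alpha_v(\chi_v) = \alpha_v(1_v)$. At $\l$, $\chi_\l$ is ramified and $J$ has good reduction, so Lemma \ref{ramd} yields $\alpha_\l(1_\l) \cap \alpha_\l(\chi_\l) = 0$; by self-duality (Remark \ref{twistselfdual}) each is one-dimensional in the two-dimensional $H^1(K_\l, J[\pi])$, so they are complementary lines. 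Because $\mathrm{loc}_\l(c) \ne 0$, the map $\mathrm{loc}_\l \colon \Sel_\pi(J/K) \to \alpha_\l(1_\l)$ is surjective, so $d_p(\Sel_{\pi,\l}(J/K)) = d_p(\Sel_\pi(J/K)) - 1$, and Lemma \ref{ptd} forces $\Sel_\pi^{\l}(J/K) = \Sel_\pi(J/K)$. Finally, $\Sel_\pi(J^\chi/K) \subseteq \Sel_\pi^{\l}(J/K) = \Sel_\pi(J/K)$, and every such element has $\mathrm{res}_\l$-image in $\alpha_\l(1_\l) \cap \alpha_\l(\chi_\l) = 0$; hence $\Sel_\pi(J^\chi/K) = \Sel_{\pi,\l}(J/K)$, which has the required dimension.

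The main obstacle is the simultaneous control of two things: (a) the conjugacy class of Frobenius in $\Gal(K(J[\pi])/K) = S_n$, which pins down the dimension of $\alpha_\l(1_\l)$ and makes $\mathrm{loc}_\l(c)$ nonzero, and (b) the splitting behaviour of $\l$ in an auxiliary ray class field, which is needed to produce a global character with the prescribed local profile. Both are packaged together through Lemma \ref{3.5}, which lets me prescribe $\rho$ modulo $K(J[\pi])K^{\ab}$, so that a single application of Chebotarev delivers primes $\l$ meeting both requirements. Varying $\l$ over the infinitely many primes provided by Chebotarev gives infinitely many non-isomorphic twists $J^\chi$ with $d_p(\Sel_\pi(J^\chi/K)) = d_p(\Sel_\pi(J/K)) - 1$.
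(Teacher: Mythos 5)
Your argument is correct and follows essentially the same route as the paper: a Chebotarev-chosen auxiliary prime $\l$ of good reduction whose Frobenius acts as a two-orbit permutation, Lemma \ref{3.5} to make $\loc_\l$ surjective onto the one-dimensional $\alpha_\l(1_\l)$, a twist by a character ramified only at $\l$ and locally trivial on $\Sigma$, and the Poitou--Tate accounting via Lemmas \ref{ptd} and \ref{ramd}. The only point worth tightening is the existence of the global character: the paper gets it not from $\Pic(\O_{K,\Sigma})=0$ alone but by forcing $\Frob_\l$ to be trivial (up to a prime-to-$p$ power) on the ray class field $K(\theta)$, so that $\l^k=(d)$ with $d\equiv 1\pmod{\theta}$ and $\chi$ is the Kummer character of $d$; the compatibility of this condition with $\Frob_\l|_{K(J[\pi])}=\sigma$ rests on the fact that $S_n$ has no normal subgroup of index $p$ for odd $p$, which is exactly the linear-disjointness your ``packaged together through Lemma \ref{3.5}'' step implicitly uses.
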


\begin{proof}
We prove this proposition by following the method of the proof of \cite[Proposition 5.1]{hilbert}. Let $\Delta_f$ be the discriminant of the polynomial $f$. Let $\theta$ be the (formal) product of $p^3$, all primes where $J$ has bad reduction and all archimedean places. Let $K(\theta)$ be its ray class field and $K[\theta]$ be the $p$-maximal subextension of $K(\theta)$. Then, $K[\theta]$ and $K(J[\pi])$ are linearly disjoint. Indeed, $S_n$ has no normal subgroup of index $p$ for an odd prime $p$. Therefore we can find an element $\sigma \in G_K$ such that $\sigma \mid _{J(K[\pi])}$ consists of $2$ disjoint orbits $(\sigma|_{J(K[\pi])} \in \Gal(K(J[\pi])/K) = \Gal(f) = S_n)$, and $\sigma \mid _{K[\theta]} = 1$.

  Let
  $$
  \phi : \Sel_{\pi}(J/K) \too J[\pi]/(\sigma-1)J[\pi]
  $$
  be a homomorphism. By Lemma \ref{3.5}, we can find $\rho \in G_K$ such that
  \begin{itemize}
  \item
  $\rho \mid _{K[\theta]K(J[\pi])} = \sigma$
  \item
  $c(\rho) = \phi(c)$ for every $c \in \Sel_{\pi}(J/K)$.
  \end{itemize}

  Let $N$ be a Galois extension of $K$ containing $K(\theta)K(J[\pi])$, large enough so that the restriction of every element in $\Sel_{\pi}(J/K)$ to $G_N$ is zero (Choosing such a $G_N$ is possible because the Selmer group is finite).

  By the Chebotarev density theorem, we can find a prime $\l$ of $K$ such that $\l\nmid p$, $J$ has good reduction at $\l$, the extension $N/K$ is unramified at $\l$ and $\Frob_\l \in$ (the conjugacy class of $\rho$ in $\Gal(N/K)$). Since $p \nmid [K(\theta) : K[\theta]]$, the restriction of $\rho^k$  to ${K(\theta)}$ is trivial for some $p\nmid k$. Therefore $\l^k$ is principal generated by $d \equiv 1$(mod $\theta$). Let $F = K(\sqrt[p]{d})$. Then all places dividing $\theta$ split in $F/K$, the extension $F/K$ is ramified at $\l$, and nowhere else because its discriminant divides $p^pd^{p-1}$. Let $\chi$ denote the image of the Kummer map $K^\times/(K^\times)^p \cong \Xset^p(K)$. Therefore, by Lemma \ref{localzero}, $\alpha(1_v) = \alpha(\chi_v)$ for all places $v$ except $\l$, where $\chi_v$ is the restriction of $\chi$ to $G_{K_v}$.

  Since $J$ has good reduction at $\l$,
  $$
  \alpha_\l(1_\l) \cong J[\pi]/(\Frob_{\l}-1)J[\pi] \cong J[\pi]/(\rho-1)J[\pi] = J[\pi]/(\sigma-1)J[\pi].
  $$
  The first isomorphism follows from Lemma \ref{hardtofindref}.
  By Lemma \ref{dcyc} and our choice of $\sigma$, we have $d_p(\alpha_\l(1_\l)) = 1$.

The localization map (Definition \ref{locmap})
  $$
  \loc_\l : \Sel_{\pi}(J/K) \too \alpha_\l(1_\l) \cong J[\pi]/(\rho-1)J[\pi]
  $$
  is given by evaluation of cocycles at $\Frob_\l \sim \rho$. Therefore we have $$
  \loc_\l(\Sel_{\pi}(J/K)) = \phi(\Sel_{\pi}(J/K))
  $$
by Lemma \ref{3.5}.
Note that $d_p(\Sel_\pi^\l(J/K))-d_p(\Sel_{\pi, \l}(J/K)) = 1$ by Lemma \ref{ptd}.
Choose $\phi$ so that $d_p(\mathrm{Im}(\phi)) = 1$. Then,
$\Sel_\pi(J/K) = \Sel_\pi^\l(J/K).$
Then Lemma \ref{ramd} and Lemma \ref{ptd} show that
$$
d_p(\Sel_\pi(J^\chi/K)) = d_p(\Sel_\pi(J/K)) - 1.
$$
\end{proof}

\begin{defn}
\label{cpi}
Let $\theta$ be a (formal) product of primes of $K$. Define
\begin{align*}
\Sigma(\theta) & := \Sigma \cup \{\l : \l|\theta \}, \\
\cP_{i} &:=\; \{\l : \text{$\l \notin \Sigma$
   and $\dim_{\Fp}(J[\pi]^{G_{K_\l}}) = i$}\}\quad \text{for $0 \le i \le n-1$}, \text{ and} \\
\cP &:=\; \cP_0 \textstyle\coprod \cP_1 \coprod \cP_2 \textstyle\coprod \cdots \textstyle\coprod \cP_{n-1} = \{\l : \l \notin \Sigma \}.
\end{align*}
\end{defn}

\begin{rem}
\label{localdim}
If $\l \in \cP_i$, then $d_p(\alpha_\l(1_\l)) = i$ by Lemma \ref{ram}.
\end{rem}

\begin{lem}
\label{pinftorsion}
Suppose that $v$ is a prime of $K$ such that $v\nmid p\infty$, and $J$ has good reduction at $v$. If $\psi_v \in \Xset^p_\ram(K_v)$, then
$$
J^{\psi_v}(K_v)[p^\infty] = J^{\psi_v}(K_v)[\pi] \text{ }(\cong J(K_v)[\pi])
$$
\end{lem}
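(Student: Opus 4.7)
\medskip

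\noindent\textbf{Proof plan.} The reverse inclusion $J^{\psi_v}(K_v)[\pi] \subseteq J^{\psi_v}(K_v)[p^\infty]$ is trivial, and the parenthetical isomorphism $J^{\psi_v}(K_v)[\pi] \cong J(K_v)[\pi]$ is Proposition \ref{canonicaliso} applied to the local field $K_v$. So the real content is the inclusion $J^{\psi_v}(K_v)[p^\infty] \subseteq J^{\psi_v}(K_v)[\pi]$. I would deduce this from two facts: (a) since $v \nmid p\infty$ and $J$ has good reduction at $v$, the Néron--Ogg--Shafarevich criterion implies the inertia group $I_v \subset G_{K_v}$ acts trivially on $J(\bar K_v)[p^\infty]$; and (b) the Galois action on $J^{\psi_v}$ differs from that on $J$ by twisting by the cocycle $\psi_v : G_{K_v} \to \bmu_p \to \mathrm{Aut}(J)$ of Remark \ref{action}, which acts on $J$ through the $\Z[\zeta_p]$-action.

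Concretely, fix an isomorphism $\phi: J \isom J^{\psi_v}$ over $\bar K_v$ with $\phi^\sigma\phi^{-1} = \psi_v(\sigma)$ acting as multiplication by $\psi_v(\sigma) \in \bmu_p$. For $P \in J^{\psi_v}(K_v)[p^\infty]$, set $Q := \phi^{-1}(P) \in J(\bar K_v)[p^\infty]$. The cocycle identity translates the $G_{K_v}$-invariance of $P$ into
$$
\sigma Q \;=\; \psi_v(\sigma)^{-1} Q \qquad \text{for all } \sigma \in G_{K_v}.
$$
Restricting to $\sigma \in I_v$, fact (a) gives $\sigma Q = Q$, hence $(1-\psi_v(\sigma))Q = 0$ for every $\sigma \in I_v$.

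Now use the assumption $\psi_v \in \Xset^p_{\ram}(K_v)$: there is some $\sigma_0 \in I_v$ with $\psi_v(\sigma_0) = \zeta_p^k$ for an integer $k \not\equiv 0 \pmod p$. Then $1-\zeta_p^k$ annihilates $Q$. Since $1 - \zeta_p^k$ and $\pi = 1 - \zeta_p$ differ by a unit in $\Z[\zeta_p]$ (the ratio $(1-\zeta_p^k)/(1-\zeta_p) = 1 + \zeta_p + \cdots + \zeta_p^{k-1}$ is a unit because the cyclotomic polynomial is Eisenstein at $\pi$), this forces $\pi Q = 0$, i.e.\ $Q \in J[\pi]$. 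Hence $P = \phi(Q) \in J^{\psi_v}[\pi]$, and since $P$ was $K_v$-rational, $P \in J^{\psi_v}(K_v)[\pi]$.

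The only mildly delicate point is getting the cocycle-twisting convention correct so that $G_{K_v}$-invariance on $J^{\psi_v}$ really does translate to $\sigma Q = \psi_v(\sigma)^{-1} Q$ on $J$; once that is set up, the ramification of $\psi_v$ plus good reduction does all the work, via the unit $(1-\zeta_p^k)/\pi$ in $\Z[\zeta_p]$.
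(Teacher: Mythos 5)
Your argument is correct and is essentially the paper's own proof: both use N\'eron--Ogg--Shafarevich to make inertia act trivially on $J[p^\infty]$, then use a ramified inertia element to get $(1-\zeta_p^k)Q=0$ with $1-\zeta_p^k$ a unit multiple of $\pi$. The only cosmetic difference is that the paper phrases this by passing to the maximal unramified extension $K_v^{\ur}$ and a generator of $\Gal(L_v^{\ur}/K_v^{\ur})$ rather than working with $I_v$ directly.
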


\begin{proof}
Let $L_v := \overline{K}_v^{\mathrm{Ker}(\psi_v)}$ so that $L_v$ is a (totally) ramified extension over $K_v$ of degree $p$. Let $K_v^\ur, L_v^\ur$ denote the maximal unramified extensions over $K_v, L_v$, respectively.
It is sufficient to prove that
\begin{equation}
\label{pipipi}
J^{\psi_v}(K_v^\ur)[p^\infty] = J^{\psi_v}(K_v^\ur)[\pi].
\end{equation}
Let $\sigma \in \Gal(L_v^\ur/K_v^\ur)$ be non-trivial. Then
$$
J^{\psi_v}(L_v^\ur)[p^\infty]^{\sigma=1} = J^{\psi_v}(K_v^\ur)[p^\infty].
$$
It is well-known that the assumptions that $v\nmid p\infty$ and $J$ has good reduction at $v$ imply that $J[p^\infty] \subset J(K_v^\ur)$. Let
$$
\lambda : J^{\psi_v} \cong J
$$
be an isomorphism over $L_v$. Note that $\lambda^\sigma = \sigma\lambda\sigma^{-1} = \psi_v(\sigma)\lambda$. If $P \in J^{\psi_v}(K_v^{\ur})[p^\infty]$, then
\begin{equation}
\label{pinfty}
\lambda(P) = \lambda(P^{\sigma}) = \psi_v(\sigma)^{-1}\lambda^\sigma(P^{\sigma}) =  \psi_v(\sigma)^{-1}(\lambda(P))^{\sigma} = \psi_v(\sigma)^{-1}\lambda(P),
\end{equation}
whence $P = \zeta_p P$ if we take $\sigma$ so that $\psi_v(\sigma) = \zeta_p^{-1}$. In the last equality in \eqref{pinfty}, we have used the fact that $J[p^\infty] \subset J(K_v^\ur)$. Now it is easy to see that \eqref{pipipi} holds.
\end{proof}

\begin{defn}
\label{characterdefine}
Choose a nontrivial unramified character $\epsilon_\l \in \Xset^p(K_\l)$ and a ramified character $\eta_\l \in \Xset_{\ram}(K_\l)$ for every prime $\l \in \cP$. Define
$$
\eta_{\l, j} := \eta_\l\epsilon_\l^j
$$
for every $0 \le j  \le p-1$.
\end{defn}
Obviously, all $\eta_{\l, j}$ are in $\Xset^p_{\ram}(K_\l)$.
\begin{lem}
\label{rlcd}
 Suppose that $p$ is an odd prime. Let $\l \in \cP$ be a prime such that every orbit of $\Frob_\l \in S_n$  has length not divisible by $p$. Then for any $a, b$ such that $0 \leqq a,b \leqq p-1$ and $a \neq b$,
$$
\alpha_\l(\eta_{\l, a}) \cap \alpha_\l(\eta_{\l, b}) = \{0\}.
$$
\end{lem}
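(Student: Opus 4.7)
The plan is to reduce the lemma to an application of Lemma~\ref{ramd} to the twisted Jacobian $J' := J^{\eta_{\l,a}}$, with the unramified character $\chi := \epsilon_\l^{\,b-a}$ (nontrivial since $a \not\equiv b \pmod{p}$ and $\epsilon_\l$ has order $p$). Let $F$ be its fixed field, namely the unique $\Z/p$-subextension of $K_\l^{\ur}/K_\l$. Since twists compose, $(J')^\chi = (J^{\eta_{\l,a}})^{\epsilon_\l^{\,b-a}} = J^{\eta_{\l,b}}$. Thus, under the canonical identification $H^1(K_\l, J'[\pi]) \cong H^1(K_\l, J[\pi])$ coming from Proposition~\ref{canonicaliso}, the trivial-character Kummer image for $J'$ is exactly $\alpha_\l(\eta_{\l,a})$, and the $\chi$-twisted Kummer image for $J'$ is exactly $\alpha_\l(\eta_{\l,b})$. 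It therefore suffices to establish the hypothesis of Lemma~\ref{ramd} for the pair $(J', \chi)$ and conclude that the corresponding intersection is $\{0\}$.

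Next I would verify that the natural inclusion induces an isomorphism
\[
  J'(K_\l)/\pi J'(K_\l) \;\isom\; J'(F)/\pi J'(F).
\]
The subtlety is that $J'$ generally does \emph{not} have good reduction, so Lemma~\ref{ram}(ii) does not apply directly. Instead, I would note that $\eta_{\l,a}$ remains ramified as a character of $G_F$ (because $F/K_\l$ is unramified), so Lemma~\ref{pinftorsion} applied over both $K_\l$ and $F$ gives $J'(K_\l)[p^\infty] = J'(K_\l)[\pi]$ and $J'(F)[p^\infty] = J'(F)[\pi]$. Combined with Lemma~\ref{aviso}, both Kummer quotients collapse to their $\pi$-torsion, and Proposition~\ref{canonicaliso} identifies these with $J(K_\l)[\pi]$ and $J(F)[\pi]$, with the inclusion being the obvious one.

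It remains to prove $J(K_\l)[\pi] = J(F)[\pi]$ inside $J[\pi]$. Since $J$ has good reduction at $\l$ and $\l \nmid p\infty$, inertia acts trivially on $J[\pi]$, so these equal $J[\pi]^{\Frob_\l}$ and $J[\pi]^{\Frob_\l^p}$ respectively. The hypothesis of the lemma---that every orbit of $\Frob_\l \in S_n$ on the roots of $f$ has length coprime to $p$---forces $\Frob_\l^p$ to have exactly the same orbit decomposition on those roots (a length-$m$ orbit with $\gcd(m,p) = 1$ remains a single length-$m$ orbit under $\Frob_\l^p$), so by the description in Lemma~\ref{cycle} the two fixed spaces coincide. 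The main obstacle is this last step: matching the $\pi$-torsion of $J'$ over $K_\l$ and over $F$ in the absence of good reduction, which is exactly the role of the coprime-orbit-length hypothesis.
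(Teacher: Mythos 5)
Your proposal is correct and follows essentially the same route as the paper: reduce to Lemma~\ref{ramd} applied to $J^{\eta_{\l,a}}$ with the unramified twisting character $\epsilon_\l^{b-a}$, and verify its hypothesis by collapsing both Kummer quotients to $\pi$-torsion via Lemmas~\ref{aviso} and~\ref{pinftorsion}. The only cosmetic difference is in the final step: the paper deduces $J(K_\l)[\pi] = J(F)[\pi]$ from the linear disjointness of $F$ and $K_\l(J[\pi])$ (since $[K_\l(J[\pi]):K_\l]$ is prime to $p$), whereas you compare the orbit decompositions of $\Frob_\l$ and $\Frob_\l^p$ via Lemma~\ref{cycle} --- these are the same observation.
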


\begin{proof}
Lemma \ref{aviso} and Lemma \ref{pinftorsion} show that
\begin{align*}
J^{\eta_{\l,a}}(K_\l)/\pi J^{\eta_{\l,a}}(K_\l) &\cong J^{\eta_{\l,a}}(K_\l)[p^\infty]/\pi J^{\eta_{\l,a}}(K_\l)[p^\infty] \\
 & \cong J^{\eta_{\l,a}}(K_\l)[\pi]/\pi J^{\eta_{\l,a}}(K_\l)[\pi] \\
 & = J^{\eta_{\l,a}}(K_\l)[\pi].
\end{align*}
Let $F_\q := \overline{K}_\l^{\text{Ker } \epsilon_\l^{b-a}}$, the degree $p$ unramified extension over $K_\l$. Since every orbit of $\Frob_\l$ has length not divisible by $p$, the degree $[K_\l(J[\pi]) : K_\l]$ is not as well. In particular, $F_\l$ and $K_\l(J[\pi])$ are linearly disjoint over $K_\l$. Then it follows that
$$
J^{\eta_{\l,a}}(F_\l)[\pi] \cong J(F_\l)[\pi] = J(K_\l)[\pi] \cong J^{\eta_{\l,a}}(K_\l)[\pi].
$$
Hence we have
$$
J^{\eta_{\l,a}}(F_\l)/\pi J^{\eta_{\l,a}}(F_\l) \cong J^{\eta_{\l,a}}(F_\l)[\pi] = J^{\eta_{\l,a}}(K_\l)[\pi] \cong J^{\eta_{\l,a}}(K_\l)/\pi J^{\eta_{\l,a}}(K_\l),
$$
where the first isomorphism comes exactly as above. Finally, we apply Lemma \ref{ramd} to get the conclusion.
\end{proof}

\begin{lem}\cite[Lemma 6.6]{KMR}
\label{KMRlemma}
Suppose that $G, H$ are abelian groups and $M \subset G \times H$ is a subgroup. Let $\pi_G$ and $\pi_H$ denote the projection maps from $G \times H$ to $G$ and $H$, respectively. Let $M_0:= \ker(\pi_G:M \to G/G^p)$.
\begin{enumerate}
\item
The image of the natural map $\Hom((G\times H)/M, \boldsymbol{\mu}_p) \to \Hom(H,\boldsymbol{\mu}_p))$ is exctly $\{g \in \Hom(H,\boldsymbol{\mu}_p): \pi_H(M_0) \subset \ker(g)\}$
\item
If $M/M^p \to G/G^p$ is injective, then $\Hom((G\times H)/M, \boldsymbol{\mu}_p) \to \Hom(H, \boldsymbol{\mu}_p)$ is surjective.
\end{enumerate}
\end{lem}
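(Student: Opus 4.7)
The plan is to identify $\Hom((G\times H)/M, \boldsymbol{\mu}_p)$ with the subgroup of pairs $(\phi_G, \phi_H) \in \Hom(G, \boldsymbol{\mu}_p) \times \Hom(H, \boldsymbol{\mu}_p)$ satisfying $\phi_G(g)\phi_H(h) = 1$ for every $(g,h) \in M$, via the canonical decomposition $\Hom(G\times H, \boldsymbol{\mu}_p) = \Hom(G, \boldsymbol{\mu}_p) \times \Hom(H, \boldsymbol{\mu}_p)$. Under this identification, the map in the statement is the second projection, and a character $\phi_H$ lies in its image if and only if there exists $\phi_G \in \Hom(G, \boldsymbol{\mu}_p)$ with $\phi_G(g) = \phi_H(h)^{-1}$ for every $(g,h) \in M$.

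For (i), I would first analyze when such a $\phi_G$ is consistently defined on $\pi_G(M)$. Because $\boldsymbol{\mu}_p$ has exponent $p$, any $\phi_G$ factors through $G/G^p$, so the question becomes whether the rule $\bar g \mapsto \phi_H(h)^{-1}$ (for $(g,h)\in M$) defines a homomorphism on the image of $\pi_G(M)$ in $G/G^p$. This holds exactly when $\phi_H(h) = 1$ for every $(g, h) \in M$ with $g \in G^p$, that is, whenever $h \in \pi_H(M_0)$; this gives the asserted description of the image. Conversely, if $\pi_H(M_0) \subset \ker(\phi_H)$, the rule produces a well-defined homomorphism $\bar\phi_G$ on the subgroup $(\pi_G(M) \cdot G^p)/G^p$ of $G/G^p$, and since $\boldsymbol{\mu}_p$ is injective in the category of $\F_p$-vector spaces, $\bar\phi_G$ extends to all of $G/G^p$, yielding the desired $\phi_G$.

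For (ii), I would deduce surjectivity from (i) by showing $\pi_H(M_0) \subset H^p$. Given $(g, h) \in M_0$, by definition $g \in G^p$, so the class of $(g, h)$ in $G/G^p$ is trivial; the hypothesis that $M/M^p \to G/G^p$ is injective then forces $(g, h) \in M^p$. Writing $(g, h) = (g_0, h_0)^p$ for some $(g_0, h_0) \in M$ gives $h \in H^p$. Since every $\phi_H \in \Hom(H, \boldsymbol{\mu}_p)$ is trivial on $H^p$, the condition in (i) is automatically satisfied, so the map is surjective. The only nontrivial ingredient in the whole argument is the extension step in (i), which rests on the injectivity of $\boldsymbol{\mu}_p$ as an $\F_p$-module; everything else is formal manipulation of the definitions.
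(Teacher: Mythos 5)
Your argument is correct and complete, and it is essentially the standard argument for this lemma: identify $\Hom((G\times H)/M,\boldsymbol{\mu}_p)$ with pairs $(\phi_G,\phi_H)$ killing $M$, check that the obstruction to finding $\phi_G$ is exactly the condition $\pi_H(M_0)\subset\ker(\phi_H)$, and use the injectivity of $\boldsymbol{\mu}_p$ as an $\F_p$-module to extend from $\pi_G(M)G^p/G^p$ to $G/G^p$. The paper itself gives no proof, citing \cite[Lemma 6.6]{KMR}, and your proof matches the one given there.
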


\begin{lem}
\label{pzeroinj}
Suppose that $\O_\l$ is the ring of integers of $K_\l$ for every prime ideal $\l$. Then the natrual map
\begin{equation}
\label{injmap}
\O_{K,\Sigma}^\times/(\O_{K,\Sigma}^\times)^p \too \prod_{\l \in \cP_0} \O_\l^\times/(\O_{\l}^\times)^p
\end{equation}
is injective.
\end{lem}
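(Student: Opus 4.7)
The plan is Kummer-theoretic, combined with a Chebotarev density argument on a compositum with the $\pi$-torsion field. Let $u \in \O_{K,\Sigma}^\times$ lie in the kernel, so $u \in (K_\l^\times)^p$ for every $\l \in \cP_0$, and form the Kummer extension $L := K(\sqrt[p]{u})$, which has degree $1$ or $p$ over $K$. If $L = K$, then $u = v^p$ for some $v \in K^\times$, and since $p\, v_\l(v) = v_\l(u) = 0$ for every $\l \notin \Sigma$, one has $v \in \O_{K,\Sigma}^\times$, giving $u \in (\O_{K,\Sigma}^\times)^p$. So the real task is to rule out $[L:K] = p$.

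Assume $[L:K] = p$ for contradiction. Since $u$ is a unit at every $\l \notin \Sigma$ and $\Sigma$ contains all primes above $p$, standard Kummer theory (using $\zeta_p \in K$) shows $L/K$ is unramified outside $\Sigma$. Form the compositum $M := L \cdot K(J[\pi])$. Because $\Gal(K(J[\pi])/K) \cong \Gal(f) \cong S_n$ has abelianization $\Z/2\Z$ and $p$ is odd, $S_n$ admits no surjection to $\Z/p\Z$; hence $L \cap K(J[\pi]) = K$, and $\Gal(M/K) \cong S_n \times \Z/p\Z$. The extension $M/K$ is itself unramified outside $\Sigma$, since both factors are.

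Now apply the Chebotarev density theorem to $M/K$: pick a prime $\l$ of $K$, not in $\Sigma$ and unramified in $M/K$, whose Frobenius class in $\Gal(M/K) \cong S_n \times \Z/p\Z$ is the conjugacy class of $(c,\tau)$ with $c$ an $n$-cycle in $S_n$ and $\tau \neq 1$ in $\Z/p\Z$; such a class is nonempty, and all but finitely many primes in it avoid $\Sigma$. For this $\l$, the action of $\Frob_\l$ on the roots of $f$ has a single orbit, so Lemma \ref{cycle} together with the identification $J(K_\l)[\pi] = J[\pi]^{\Frob_\l = 1}$ used in Lemma \ref{ram}(i) gives $\dim_{\Fp}(J[\pi]^{\Frob_\l = 1}) = 0$, placing $\l \in \cP_0$. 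On the other hand, $\Frob_\l$ is nontrivial in $\Gal(L/K)$, so $\l$ does not split completely in $L/K$; equivalently, $u \notin (K_\l^\times)^p$, contradicting the hypothesis on $u$.

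The main obstacle is the linear disjointness $L \cap K(J[\pi]) = K$ in the second step: it rests entirely on $S_n$ having no quotient of order $p$, which forces the argument to require $p$ to be odd. For $p = 2$ and $n$ odd, an $n$-cycle is even, so the sign character factoring through $\Gal(f) \cong S_n$ shows that $u = \Delta_f$ would violate the lemma; this is consistent with the $p = 2$ setting being treated by the separate machinery of Sections $5$--$7$ rather than by this lemma.
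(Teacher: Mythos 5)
Your proof is correct and follows essentially the same route as the paper's: both rest on the linear disjointness of $K(\sqrt[p]{u})$ and $K(J[\pi])$ (because $S_n$ has no normal subgroup of index $p$ for odd $p$) followed by Chebotarev applied to a Frobenius element acting as an $n$-cycle on the roots of $f$, which lands the resulting prime in $\cP_0$ while detecting that $u$ is not a local $p$-th power. The only difference is cosmetic (direct argument versus the paper's contrapositive), and your explicit check that $\O_{K,\Sigma}^\times \cap (K^\times)^p = (\O_{K,\Sigma}^\times)^p$ is a small point the paper leaves implicit.
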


\begin{proof}
Let $\alpha$ be a nontrivial element of $\O_{K,\Sigma}^\times/(\O_{K,\Sigma}^\times)^p$. We want to find a prime $\l' \in \cP_0$ such that $\alpha \notin (\O_{\l'}^\times)^p$. Two fields $K(\sqrt[p]{\alpha})$ and $K(J[\pi])$ are linearly disjoint over $K$ because $S_n$ doesn't have a normal subgroup of index $p$ for an odd prime $p$. We choose an element $\tau \in G_K$ such that
\begin{itemize}
  \item
  $\tau \mid _{K(\sqrt[p]{\alpha})} \neq 1, \text{ and }$
  \item
  $\tau \mid _{K(J[\pi])} \in S_n \text{ is a $n$-cycle}.$
  \end{itemize}
Let $U$ be the conjugacy class of $\tau$ in $\Gal(K(\sqrt[p]{\alpha})K(J[\pi])/K)$. By the Chebotarev density theorem, there exist infinitely many $\l'$ such that $\Frob_{\l'} \in U$. Such a prime $\l'$ satisfies both $\alpha \notin (\O_{\l'}^\times)^p$ and $\l' \in \cP_0$ if $\l' \not\in \Sigma$.
\end{proof}

\begin{rem}
\label{largerthansigma}
One can check easily that Lemma \ref{pzeroinj} still holds if we replace $\Sigma$ by a finite set containing $\Sigma$.
\end{rem}

\begin{prop}
\label{amazingprime}
Suppose that $p$ is an odd prime. Then there are infinitely many prime ideals $\ell \in \cP_1$ and an integer $0\le e \le p-1$ so that
\begin{equation}
\label{incselrk}
d_p(\Sel_{\pi}(J, \eta_{\ell, e})) = d_p(\Sel_{\pi}(J/K)) + 1,
\end{equation}
where $\eta_{\ell, e}$ is as in Definition \ref{characterdefine}.
\end{prop}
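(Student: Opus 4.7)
The plan is to mirror the Chebotarev/Lemma \ref{3.5} argument used in Proposition \ref{superelliptic}, but with the zero homomorphism in place of a surjection, so the goal becomes \emph{killing} the localization of $\Sel_\pi(J/K)$ at $\ell$ rather than detecting it.

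First I record a purely local observation. For a prime $\ell\in\cP_1$ such that every orbit of $\Frob_\ell$ on the roots of $f$ has length coprime to $p$, Remarks \ref{twistselfdual} and \ref{localdim} give $d_p(H^1(K_\ell,J[\pi]))=2$, while Lemmas \ref{ramd} and \ref{rlcd} force the $p+1$ self-orthogonal lines $\alpha_\ell(1_\ell),\alpha_\ell(\eta_{\ell,0}),\dots,\alpha_\ell(\eta_{\ell,p-1})$ to be pairwise distinct; since a $2$-dimensional $\F_p$-space has exactly $p+1$ lines, these are \emph{all} the lines of $H^1(K_\ell,J[\pi])$. Suppose moreover that $\loc_\ell(\Sel_\pi(J/K))=0$; then $\Sel_\pi(J/K)=\Sel_{\pi,\ell}(J/K)$, so Lemma \ref{ptd} yields $d_p(\Sel_\pi^\ell(J/K))=d_p(\Sel_\pi(J/K))+1$. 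The one-dimensional image $\loc_\ell(\Sel_\pi^\ell)$ cannot equal $\alpha_\ell(1_\ell)$ (else $\loc_\ell(\Sel_\pi)=\alpha_\ell(1_\ell)\neq 0$), so it equals $\alpha_\ell(\eta_{\ell,e})$ for some $e\in\{0,\dots,p-1\}$. For this $e$, $\Sel_\pi(J,\eta_{\ell,e})=\Sel_\pi^\ell(J/K)$, which has dimension $d_p(\Sel_\pi(J/K))+1$, yielding (\ref{incselrk}).

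It remains to produce infinitely many such $\ell$. Since $p$ is odd and $p\nmid n$, I can pick $k\in\{1,\dots,n-1\}$ with $p\nmid k(n-k)$; fix $\sigma_0\in S_n$ of cycle type $(k,n-k)$ and lift it to any $\sigma\in G_K$. Applying Lemma \ref{3.5} to $N=\Sel_\pi(J/K)$ with $\phi=0$ produces $\rho\in G_K$ with $\rho|_{K(J[\pi])}=\sigma_0$ and $c(\rho)=0$ for every $c\in\Sel_\pi(J/K)$. Choose a finite Galois extension $M/K$ containing $K(J[\pi])$ and large enough that every $c\in\Sel_\pi(J/K)$ becomes trivial on $G_M$. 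By the Chebotarev density theorem, infinitely many primes $\ell\notin\Sigma$ have $\Frob_\ell$ conjugate to $\rho$ in $\Gal(M/K)$; for such $\ell$, $\Frob_\ell|_{K(J[\pi])}=\sigma_0$ gives $\ell\in\cP_1$ with Frobenius orbits of lengths coprime to $p$, and the identification $\alpha_\ell(1_\ell)\cong J[\pi]/(\Frob_\ell-1)J[\pi]$ of Lemma \ref{hardtofindref} translates $c(\rho)=0$ into $\loc_\ell(\Sel_\pi(J/K))=0$. A final pigeonhole over the $p$ possible values of $e$ extracts infinitely many $\ell$ sharing a common $e$.

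The main technical obstacle is verifying the hypotheses $(\ast)$ of Lemma \ref{3.5}; these hold under the standing assumption $\Gal(f)\cong S_n$ with $p$ odd. The oddness of $p$ is further exploited to secure a cycle type $(k,n-k)$ with both parts prime to $p$ and to invoke Lemma \ref{rlcd}, which together supply the crucial count of $p+1$ distinct lines $\{\alpha_\ell(1_\ell)\}\cup\{\alpha_\ell(\eta_{\ell,j})\}_{j=0}^{p-1}$ in the two-dimensional $\F_p$-space $H^1(K_\ell,J[\pi])$.
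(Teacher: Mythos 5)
Your proposal is correct and follows essentially the same route as the paper: choose $\ell\in\cP_1$ via Lemma \ref{3.5} and Chebotarev so that $\Frob_\ell$ has two orbits of length prime to $p$ and $\loc_\ell$ kills $\Sel_\pi(J/K)$, then use Lemmas \ref{ptd}, \ref{ramd}, and \ref{rlcd} to identify the one-dimensional image of $\Sel_\pi^\ell(J/K)$ with one of the $p+1$ lines $\alpha_\ell(\eta_{\ell,e})$ in the two-dimensional space $H^1(K_\ell,J[\pi])$. Your explicit verification that the image cannot be $\alpha_\ell(1_\ell)$, and the pigeonhole to fix a single $e$, only make explicit points the paper leaves implicit.
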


\begin{proof}
Since $\Gal(K(J[\pi]/K) \cong S_n$, we can find a prime $\ell \in \cP_1$ (equivalently, $\Frob_\ell$ has $2$ orbits) such that neither of the order of the orbits of $\Frob_\ell\in S_n$ is divisible by $p$ (which is possible since $p$ is an odd prime) and that the localization map (Definition \ref{locmap})
$$
\loc_\ell : \Sel_{\pi}(J/K) \too \alpha_\ell(1_\ell) \cong J[\pi]/(\Frob_\ell-1)J[\pi]
$$
is trivial by Lemma \ref{3.5} combined with the Cheboterev Density Theorem. In other words, $\Sel_{\pi}(J/K) = \Sel_{\pi, \ell}(J/K)$. By Lemma \ref{ptd}, $\dim_{\Fp}(\Sel_{\pi}^\ell(J/K)) = \dim_{\Fp}(\Sel_{\pi, \ell}(J/K)) + 1$. Denote the image of the restriction map
$$
\mathrm{res}_{\ell}: \Sel_{\pi}^{\ell}(J/K)) \to H^1(K_\ell, J[\pi])
$$
by $\mathrm{res}_{\ell}(\Sel_{\pi}^{\ell}(J/K))$. Then the set $\mathrm{res}_{\ell}(\Sel_{\pi}^{\ell}(J/K))$ is a $1$-dimensional $\Fp$-vector subspace of $H^1(K_{\ell}, J[\pi])$. But Lemma \ref{rlcd}, Lemma \ref{ramd}, and Lemma \ref{localzero} together show that one can find a $\eta_{\ell, e} \in \Xset^p_{\ram}(K_\ell)$ such that
$$\mathrm{res}_\ell(\Sel_{\pi}^\ell(J/K)) = \alpha_\ell(\eta_{\ell, e}),$$
since there are exactly $p+1$ pairwise distinct $1$-dimensional subspaces of $H^1(K_\ell, J[\pi])$ because $\ell \in \cP_1$ (so $d_2(H^1(K_\ell, J[\pi]) = 2$). It follows that $\Sel_{\pi}^\ell(J/K) = \Sel_{\pi}(J, \eta_{\ell, e})$, so we are done.
\end{proof}

\begin{prop}
\label{hoohoo}
Suppose that $K$ is a number field containing $\zeta_p$, and $f \in K[x]$ is a separable polynomial. Let $n = \deg(f)$ and suppose that $p\nmid n$ is an odd prime and $\Gal(f) \cong S_n$. Let $J:= J(C_{p,f})$. Then there exists infinitely many $p$-twists $J^\chi$ such that $d_p(\Sel_\pi(J^\chi/K)) = d_p(\Sel_\pi(J/K)) + 1$.
\end{prop}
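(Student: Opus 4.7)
My plan is to mirror the argument of Proposition \ref{superelliptic}, but using Proposition \ref{amazingprime} as the Selmer-rank-raising device in place of the lowering device there. That proposition supplies infinitely many primes $\ell\in\cP_1$, and for each such $\ell$ a distinguished $e=e(\ell)\in\{0,\ldots,p-1\}$, such that $d_p(\Sel_\pi(J,\eta_{\ell,e}))=d_p(\Sel_\pi(J/K))+1$. The remaining task is to construct, for infinitely many such $\ell$, a global Kummer character $\chi\in\Xset^p(K)$ (coming from some $d\in K^\times/(K^\times)^p$) whose localizations satisfy $\chi_v=1_v$ at $v\in\Sigma$, $\chi_v$ unramified at $v\notin\Sigma\cup\{\ell\}$, and $\alpha_\ell(\chi_\ell)=\alpha_\ell(\eta_{\ell,e})$. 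For such a $\chi$, Lemma \ref{localzero} forces $\alpha_v(\chi_v)=\alpha_v(1_v)$ at every $v\neq\ell$, so that $\Sel_\pi(J^\chi/K)=\Sel_\pi(J,\eta_{\ell,e})$ and the rank jump follows immediately.

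The backbone of the construction is the class field theory recipe from Proposition \ref{superelliptic}. Let $\theta$ be the formal product of $p^3$, all primes of bad reduction of $J$, and all archimedean places, and let $K[\theta]$ denote the $p$-maximal subextension of the ray class field $K(\theta)/K$. Because $p$ is odd and $S_n$ has no normal subgroup of index $p$, the extensions $K(J[\pi])$ and $K[\theta]$ are linearly disjoint over $K$. I would refine the Chebotarev prime-selection inside the proof of Proposition \ref{amazingprime} by additionally requiring that $\Frob_\ell$ act trivially on $K[\theta]$; infinitely many $\ell$ survive this refinement. Exactly as in Proposition \ref{superelliptic}, any such $\ell$ satisfies $\ell^k=(d_0)$ with $d_0\equiv 1\pmod{\theta}$ and $p\nmid k$, and after replacing $d_0$ by $d:=d_0^{k^{-1}\bmod p}$ I may assume $v_\ell(d)\equiv 1\pmod{p}$ while retaining $d\equiv 1\pmod{\theta}$. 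The Kummer character $\chi_0$ attached to $d$ is then trivial at every $v\in\Sigma$, unramified outside $\Sigma\cup\{\ell\}$, and its $\ell$-component $\chi_{0,\ell}$ is a ramified character lying in the coset $\{\eta_{\ell,0},\ldots,\eta_{\ell,p-1}\}$ modulo unramified characters of $K_\ell$.

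The remaining step, and where I expect the main difficulty to lie, is the matching $\alpha_\ell(\chi_{0,\ell})=\alpha_\ell(\eta_{\ell,e})$. Within the coset above, the $p$ characters $\eta_{\ell,0},\ldots,\eta_{\ell,p-1}$ correspond to the $p$ distinct ramified $1$-dimensional subspaces of $H^1(K_\ell,J[\pi])$, so matching is a genuine condition on $\chi_{0,\ell}$, not a tautology. My plan is to secure it by one further Chebotarev refinement: enlarge the target extension so as to simultaneously encode the image of $\chi_{0,\ell}$ in the coset $\{\eta_{\ell,j}\}$ and the distinguished subspace $\mathrm{res}_\ell(\Sel_\pi^\ell(J/K))\subset H^1(K_\ell,J[\pi])$ that determines $e(\ell)$ via Proposition \ref{amazingprime}, and then, using linear disjointness from $K(J[\pi])$ together with the nontriviality provided by Lemma \ref{pzeroinj}, pick a Frobenius class $\Frob_\ell$ for which these two data coincide. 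By the Chebotarev density theorem infinitely many $\ell$ satisfy this combined condition; for each such $\ell$, setting $\chi:=\chi_0$ produces a $p$-twist $J^\chi$ with $d_p(\Sel_\pi(J^\chi/K))=d_p(\Sel_\pi(J/K))+1$. The resulting twists are pairwise non-isomorphic because the associated $d$'s have nontrivial valuation at mutually distinct primes, yielding the required infinitude.
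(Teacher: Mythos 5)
Your reduction of the problem is the right one: take the prime $\ell\in\cP_1$ and index $e$ from Proposition \ref{amazingprime}, and then produce a global $\chi$ that is trivial on $\Sigma$, harmless away from $\ell$, and satisfies $\alpha_\ell(\chi_\ell)=\alpha_\ell(\eta_{\ell,e})$. You also correctly identify that the matching at $\ell$ is the crux: a merely ramified $\chi_\ell$ lands in one of $p$ distinct ramified Lagrangian lines, and only one of them gives the rank increase (the others give $\Sel_\pi(J^\chi/K)=\Sel_{\pi,\ell}(J/K)=\Sel_\pi(J/K)$). The gap is in your proposed resolution. The ``further Chebotarev refinement'' cannot work as described, for two reasons. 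First, the distinguished index $e(\ell)$ is determined by $\mathrm{res}_\ell(\Sel_\pi^\ell(J/K))$, and the extra class spanning $\Sel_\pi^\ell(J/K)/\Sel_\pi(J/K)$ is a global cohomology class produced by Poitou--Tate duality \emph{after} $\ell$ is chosen; it is not a function of $\Frob_\ell$ in any finite extension fixed in advance, so it cannot be encoded as a Chebotarev condition. Second, the local component $\chi_{0,\ell}$ of your ray-class character is governed by the residue of the generator $d$ of $\ell^k$ modulo a conductor that involves $\ell$ itself; again this is not readable from a Frobenius class in a prefixed extension. So neither side of the desired coincidence is accessible to the Chebotarev theorem, and the step ``pick $\Frob_\ell$ for which these two data coincide'' has no justification.

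The fix --- and this is what the paper does --- is to abandon the ray-class/principal-ideal construction of $\chi$ for this direction and instead prove that the global-to-local restriction map on characters is surjective onto the product of the local character groups at $\Sigma(\ell)$ (which contains $K_\ell^\times$ itself, not just $\O_\ell^\times$) and at the primes of $Q=\cP\setminus(\cP_0\cup\Sigma(\ell))$. Concretely, writing $\Xset^p(K)=\Hom\bigl((G\times H)/M,\bmu_p\bigr)$ with $M=\O_{K,\Sigma(\ell)}^\times$, $G=\prod_{\q\in\cP_0}\O_\q^\times$ and $H=\prod_{\l\in Q}\O_\l^\times\times\prod_{v\in\Sigma(\ell)}K_v^\times$, the injectivity of $M/M^p\to G/G^p$ (Lemma \ref{pzeroinj}, which you cite but do not actually exploit) combined with Lemma \ref{KMRlemma}(ii) gives surjectivity onto $\Hom(H,\bmu_p)$. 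One can therefore prescribe $\chi_\ell=\eta_{\ell,e}$ \emph{exactly}, $\chi$ trivial at $\Sigma$, and $\chi$ unramified on $Q$; possible ramification at primes of $\cP_0$ is harmless because $\alpha_\l(\chi_\l)=\alpha_\l(1_\l)=0$ there. Without this surjectivity statement (or an equivalent device, e.g.\ twisting your $\chi_0$ by a global character whose $\ell$-component is a prescribed power of $\epsilon_\ell$ --- which requires the same surjectivity), the argument does not close.
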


\begin{proof}
The main technique in the proof is already used in that of \cite[Proposition 6.8]{KMR}. Suppose that $\ell \in \cP_1$ is as in Proposition \ref{amazingprime}.
As stated earlier in this section we can enlarge $\Sigma$, if necessary, so that $\Pic(\O_{K,\Sigma(\ell)}) = 0$.
Thus global class field theory gives
$$
\Xset^p(K) = \Hom(\iK/K^\times,\boldsymbol{\mu}_p) = \textstyle\Hom((\prod_{v \in \Sigma(\ell)}K_v^\times \times
      \prod_{\l\notin\Sigma(\ell)}\O_\l^\times)/\O_{K,\Sigma(\ell)}^\times,\boldsymbol{\mu}_p).
$$
Let $\psi_{\ell, e} \in \Xset^p_{\ram}(K_\ell)$ be the character such that \eqref{incselrk} holds. Define
\begin{align*}
Q &:= \cP - \{\cP_0 \cup \Sigma(\ell)\}, \\
M &:= \O_{K,\Sigma(\ell)}^\times ,\\
G &:= \prod_{\q \in \cP_0}\O_\l^\times, \text{ and }\\
H &:= \prod_{\l \in Q }\O_\l^\times \times \prod_{v \in \Sigma(\ell)}K_v^\times.
\end{align*}
By Remark \ref{largerthansigma}, the map $M/M^p \to G/G^p$ is injective. Therefore
\begin{align*}
\Xset^p(K) = \Hom((G\times H)/M, \boldsymbol{\mu}_p) \too & \Hom(H, \boldsymbol{\mu}_p) \\
& \cong \prod_{\l \in Q }\Hom(\O_\l^\times,\boldsymbol{\mu}_p) \times \prod_{v \in \Sigma(\ell)}\Hom(K_v^\times,\boldsymbol{\mu}_p)
\end{align*}
is surjective by Lemma \ref{KMRlemma}. Since the map is surjective, there exists a $\chi \in \Xset^p(K)$ satisfying
\begin{itemize}
\item
$\chi_{\ell} = \psi_{\ell, e}$,
\item
$\chi_{\q}|_{\O_\l^{\times}} = 1_\q$ for $q \in Q$, \text{ and }
\item
$\chi_{v} = 1_v$ for $v \in \Sigma$,
\end{itemize}
where $\chi_\ell, \chi_\l, \chi_v$ are the restrictions of $\chi$ to $G_{K_\ell}, G_{K_\l}, G_{K_v}$, respectively.
Then in particular, $\chi_\l$ is an unramified character if $\l \in Q$. Note that by Lemma \ref{localzero} the local conditions of two Selmer groups $\Sel_\pi(J^\chi/K)$ and $\Sel_\pi(J/K)$ are the same except at $\ell$, namely, $\alpha_\p(\chi_\p) = \alpha_\p(1_p)$ for $\p \neq \ell$. Therefore, $\Sel_\pi(J^\chi/K) = \Sel_{\pi}(J, \eta_{\ell, e})$, so by Proposition \ref{amazingprime},
$$
d_p(\Sel_\pi(J^\chi/K)) = d_p(\Sel_\pi(J/K)) + 1
$$
\end{proof}
Finally Proposition \ref{superelliptic}, Proposition \ref{hoohoo}, and induction show the following.
\begin{thm}
\label{sec}
Suppose that $K$ is a number field containing $\zeta_p$, and $f \in K[x]$ is a separable polynomial. Let $n = \deg(f)$ and suppose that $p\nmid n$ is an odd prime and $\Gal(f) \cong S_n$. Let $J:= J(C_{p,f})$. Then for any nonnegative integer $r$, there exist infinitely many $p$-twists $J^\chi$ such that $d_p(\Sel_\pi(J^\chi/K)) = r$.
\end{thm}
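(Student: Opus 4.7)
The plan is to deduce Theorem \ref{sec} from the two key Propositions \ref{superelliptic} and \ref{hoohoo} by a simple induction on the gap between the target rank $r$ and the current $\pi$-Selmer rank. Set $r_0 := d_p(\Sel_\pi(J/K))$. Observe first that the hypotheses of the theorem are preserved under $p$-twisting: any $p$-twist $J^\chi$ is, by Definition \ref{definitionofquadratictwist}, the Jacobian $J(C_{p, d^{-1}f})$, and the polynomial $d^{-1}f$ has the same roots as $f$ (hence the same Galois group $S_n$) and the same degree $n$ coprime to $p$. Consequently Proposition \ref{superelliptic} and Proposition \ref{hoohoo} can both be applied to any $p$-twist of $J$, and a $p$-twist of a $p$-twist is again a $p$-twist.

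If $r > r_0$, I would apply Proposition \ref{hoohoo} exactly $r - r_0$ times: it produces a $p$-twist $J^{\chi_1}$ with $d_p(\Sel_\pi(J^{\chi_1}/K)) = r_0 + 1$, then applied to $J^{\chi_1}$ it yields $J^{\chi_2}$ with $\pi$-Selmer rank $r_0 + 2$, and so on. Symmetrically, if $r < r_0$, I would apply Proposition \ref{superelliptic} exactly $r_0 - r$ times (its hypothesis that the current $\pi$-Selmer rank is $\geq 1$ is maintained throughout the descent because at every intermediate stage the rank is at least $r+1 \geq 1$). In the borderline case $r = r_0$, I would apply Proposition \ref{hoohoo} once to jump to rank $r+1$ and then Proposition \ref{superelliptic} once to come back down to $r$.

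To upgrade "exists" to "infinitely many," the key observation is that the very last step of the iteration already supplies infinitely many twists: Proposition \ref{hoohoo} and Proposition \ref{superelliptic} each assert \emph{infinitely many} $p$-twists with the desired change of rank. Concretely, after choosing any single intermediate twist $J^{\chi_{r-r_0-1}}$ (or $J^{\chi_{r_0-r-1}}$) at the penultimate stage, the final application of the relevant proposition to this fixed twist produces infinitely many distinct characters $\chi$ with $d_p(\Sel_\pi(J^\chi/K)) = r$. Composing each such final-stage character with the fixed preceding ones gives infinitely many distinct $p$-twists of $J$ attaining Selmer rank exactly $r$.

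There is essentially no obstacle left to overcome: Propositions \ref{superelliptic} and \ref{hoohoo} already contain all of the delicate input, namely the Chebotarev-plus-Lemma~\ref{3.5} argument that manufactures a prime $\ell$ at which one can surgically increase or decrease the $\pi$-Selmer rank by one via a carefully chosen ramified character $\eta_{\ell,e}$, together with the global class field theory patching (via Lemma \ref{KMRlemma} and Lemma \ref{pzeroinj}) that realizes the prescribed local character as the localization of a global $\chi \in \Xset^p(K)$. The only point requiring a line of verification is that the hypothesis $p \nmid n$ and $\Gal(f) \cong S_n$ is genuinely preserved under twisting, which is immediate as remarked above. Thus the theorem follows by induction on $|r - r_0|$.
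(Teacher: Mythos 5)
Your proposal is correct and is essentially the paper's own argument: the paper proves Theorem \ref{sec} by exactly this induction, combining Proposition \ref{superelliptic} (decrease by one) and Proposition \ref{hoohoo} (increase by one), with Remark 4.1 supplying the observation that a $p$-twist of a $p$-twist is again a $p$-twist satisfying the same hypotheses. Your explicit handling of the case $r = r_0$ and of how ``infinitely many'' propagates through the final step are details the paper leaves implicit, but they match its intent.
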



\section{Canonical quadratic forms}
Until the end of the paper, we are only interested in the hyperelliptic curve case, so we simply write $C_f$, and $\Xset(K)$ (or, $\Xset(K_v))$ to denote the hyperelliptic curve $C_{2,f}$ given by $y^2 = f(x)$, and $\Xset^2(K)$ (or, $\Xset^2(K_v))$, respectively. We denote the Jacobian of $C_{f}$ by $J$. We give a proof of the fact that the two quadratic forms (defined below) on $J[2]$ and $J^\chi[2]$ induced from the Heisenberg groups coincide. This enables us to show a parity relation between two Selmer groups $\Sel_2(J/K)$ and $\Sel_2(J^\chi/K)$ (See Theorem \ref{parity}).

Let $V$ be a $\Ftwo$-vector space. Following \cite{KMR}, we define quadratic forms, metabolic spaces, and Lagrangian (maximal isotropic) subspaces.
\begin{defn}
\label{metabolic}
A {\em quadratic form} on $V$ is a function $q : V \to \Ftwo$ such that
\begin{itemize}
\item
$q(av) = a^2 q(v)$ for every $a \in \Ftwo$ and $v \in V$, \text{ and }
\item
the map $(v,w)_q := q(v+w)-q(v)-q(w)$ is a bilinear form.
\end{itemize}
We say that $X$ is a {\em Lagrangian subspace} or {\em maximal isotropic subspace} of $V$ if $q(X)$ = 0 and $X$ = $X^{\perp}$ in the induced bilinear form.

  A {\em metabolic space} $(V, q)$ is a vector space such that $( , )_q$ is nondegenerate and $V$ contains a {\em Lagrangian subspace}.
\end{defn}

\begin{lem}
\label{numberoflaglangian}
Suppose that $(V, q)$ is a metabolic space such that $d_2(V) = 2n$. Then for
a given Lagrangian subspace $X$ of $V$, there are exactly $2^{n(n-1)/2}$ Lagrangian subspaces that intersect $X$ trivially; i.e.,
$$
|\{ Y: Y\text{ is a Lagrangian subspace such that }Y \cap X = \{0\} \}| = 2^{n(n-1)/2}.
$$
\end{lem}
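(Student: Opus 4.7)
The plan is to fix an auxiliary Lagrangian complement $Y_0$ to $X$, parametrize every Lagrangian transverse to $X$ as the graph of a linear map $Y_0 \to X$, translate the two Lagrangian conditions (isotropy under $(\cdot,\cdot)_q$ and vanishing of $q$) into conditions on the associated matrix, and then count.

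First I would verify that a Lagrangian $Y_0$ with $Y_0 \cap X = \{0\}$ exists. Picking a basis $x_1,\ldots,x_n$ of $X$ and using nondegeneracy of $(\cdot,\cdot)_q$, I can choose $y_1',\ldots,y_n'$ satisfying $(y_i', x_j)_q = \delta_{ij}$, so their span is a complement to $X$. Note that $(v,v)_q = q(2v) - 2q(v) = 0$ over $\Ftwo$, so the induced bilinear form is automatically alternating. Replacing $y_i'$ by $y_i = y_i' + \sum_j c_{ij} x_j$ preserves the duality with $X$; solving $c_{ij} + c_{ji} = (y_i', y_j')_q$ for $i<j$ then arranges $(y_i, y_j)_q = 0$ for all $i, j$. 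A further adjustment $y_i \mapsto y_i + q(y_i) x_i$ kills $q(y_i)$ without disturbing the pairing relations, and bilinearity propagates $q=0$ to the whole span, producing the desired Lagrangian $Y_0$.

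Next, every subspace $Y$ with $V = X \oplus Y$ is the graph $\{y + \phi(y) : y \in Y_0\}$ of a unique $\phi \in \Hom(Y_0, X)$, because the projection $V \to Y_0$ along $X$ restricts to an isomorphism on any such $Y$. Using $y_1,\ldots,y_n$ dual to $x_1,\ldots,x_n$ under $(\cdot,\cdot)_q$, write $\phi(y_i) = \sum_k M_{ki} x_k$. Since $X$ and $Y_0$ are isotropic, the bilinear Lagrangian condition on $Y$ collapses to $(y_j, \phi(y_i))_q + (y_i, \phi(y_j))_q = 0$, i.e.\ $M_{ji} = M_{ij}$. The quadratic Lagrangian condition $q(y + \phi(y)) = q(y) + q(\phi(y)) + (y,\phi(y))_q$ reduces to $(y, \phi(y))_q = 0$; writing $y = \sum a_i y_i$ and invoking symmetry of $M$, this equals $\sum_i a_i^2 M_{ii} = \sum_i a_i M_{ii}$ over $\Ftwo$, which vanishes identically precisely when $M_{ii} = 0$ for every $i$.

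Finally, symmetric $n \times n$ matrices over $\Ftwo$ with zero diagonal are freely determined by their $\binom{n}{2}$ strictly upper-triangular entries, yielding exactly $2^{n(n-1)/2}$ Lagrangians transverse to $X$. The only mildly delicate step is the construction of $Y_0$: one must arrange isotropy and vanishing of $q$ simultaneously while preserving duality with $X$. This goes through cleanly precisely because the bilinear form is alternating over $\Ftwo$, so the diagonal obstructions that would otherwise appear are absent.
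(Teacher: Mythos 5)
Your argument is correct. Let me record the checks: the complement $Y_0'$ spanned by the adjusted $y_i$ is still transverse to $X$ since the images in $V/X$ are unchanged; over $\Ftwo$ the form $(\cdot,\cdot)_q$ is alternating, so $(y_i,y_i)_q=0$ needs no attention and only the off-diagonal conditions $c_{ij}+c_{ji}=(y_i',y_j')_q$ must be solved; once the span is isotropic, $q$ restricted to it is $\Ftwo$-linear, so killing $q$ on the basis vectors (which your substitution $y_i\mapsto y_i+q(y_i)x_i$ does, using $(y_i,x_i)_q=1$) kills it everywhere; any Lagrangian $Y$ with $Y\cap X=0$ has dimension $n$ (from $Y=Y^\perp$) and is therefore a genuine complement, so the graph parametrization is exhaustive; and the two conditions on the matrix $M$ come out as symmetry and zero diagonal, giving $2^{\binom{n}{2}}$. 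This is a genuinely different route from the paper, which disposes of the lemma in one line by citing Proposition 2.6(b),(c),(e) of Poonen--Rains, where the count is extracted from their general orbit/counting formulas for maximal isotropic subspaces of quadratic spaces. The citation is of course shorter, but your proof is self-contained and exhibits the underlying structure explicitly: the Lagrangians transverse to $X$ form a torsor under the alternating (symmetric, zero-diagonal) $n\times n$ matrices over $\Ftwo$, and it makes visible exactly where the quadratic refinement $q$ (as opposed to the bilinear form alone) enters, namely in forcing $M_{ii}=0$. That last point is the characteristic-$2$ subtlety that the bilinear form cannot see, and your proof isolates it cleanly.
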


\begin{proof}
 This is immediate from Proposition 2.6 (b),(c), and (e) in \cite{poonenrain}.
\end{proof}

The most interesting case for our purposes is when $V = H^1(K_v, J[2])$ for local fields $K_v$. In this case, there is a canonical way to construct a quadratic form $q_\H$ using the Heisenberg group defined below (for more general case, see \cite[\S4]{poonenrain}). The associated  bilinear form (Definition \ref{metabolic}) given by such a quadratic form is the same as the pairing \eqref{tld} (see
\cite[Corollary 4.7]{poonenrain}). Then $\alpha_v(1_v)$ is a Lagrangian space by \cite[Proposition 4.9]{poonenrain}, so $(H^1(K_v, J[2]), q_\H)$ is a metabolic space. We explain the construction of $q_\H$ in more detail below. Following \cite[Theorem A.8.1.1]{silverman3} we define a Theta (Weil) divisor.
\begin{defn}
Let $C$ be a smooth projective curve of genus $g \ge 1$. If $j : C \to J(C)$ is an injection, define a
Theta (Weil) divisor (depending on $j$) $\Theta_{J(C), j}$ by
$$
\Theta_{J(C), j} : = j(C) + \cdots + j(C)   \text{  ($g$-1 copies)}.
$$
\end{defn}

\begin{rem}
Our main interest is when $C$ is a hyperelliptic curve $C_{f}$. In such case, we fix an embedding $j : C_{f} \to J$ by sending $(x,y)$ to $[(x,y) - \infty]$. Then the Theta divisor $\Theta_J$ satisfies
$$
[-1]^*\Theta_{J} = \Theta_{J},
$$
since $-[(x,y) - \infty] = [(x,-y) - \infty]$ in $\mathrm{Pic}^0(C_{f}) (\cong J)$.
\end{rem}
Now we define the Heisenberg group for $[2] : J \to J$.
\begin{defn}
\label{quadform}
The Heisenberg group $\mathcal{H}_{J,\Theta}$ is defined by
$$
\mathcal{H}_{J,\Theta_J} := \{(x, g): x \in J[2] \text{, and } g \in \k(J) \text{ such that } \mathrm{div}(g) = 2\tau_x^*(\Theta_J) - 2\Theta_J  \}
$$
where $\tau_x$ is translation by $x$, and $\k(J)$ is the function field of $J$. The group operation is given by $(x, g)(x', g') = (x+x', \tau_{x'}^*(g)g')$.
\end{defn}

\begin{rem}
\label{5.6}
By \cite[Remark 4.5]{poonenrain} and \cite[Corollary A.8.2.3]{silverman3}, we see that Definition \ref{quadform} is a special case of the definition given in the paragraph just before \cite[Proposition 4.6]{poonenrain}.
Let $L$ be a field of characteristic $0$, over which $J$ is defined. There is an exact sequence
\begin{equation}
\label{heisenexact}
1 \to \overline{L}^\times \to \mathcal{H}_{J,\Theta_J} \to J[2] \to 0,
\end{equation}
where the middle maps are given by sending $t$ to ($0$, $t$), and by projection. Then a quadratic form $\q_\H$ is given by the connecting homomorphism
$$
H^1(L, J[2]) \to H^2(L, \overline{L}^\times).
$$
Note that the construction of $q_\H$ is functorial with respect to base extension.
\end{rem}

\begin{defn}
Let $C_{f}$ be a hyperelliptic curve over a local field $K_v$. Let $J$ be the Jacobian of $C_{f}$. Define
$$
\l_{J,v} : H^1(K_v, J[2]) \to H^2(K_v, \overline{K}_v^\times) \cong \Q/\Z
$$
given by the connecting homomrphism of the exact sequence
$$
1 \to \overline{K}_v^\times \to \mathcal{H}_{J,\Theta_J} \to J[2] \to 0.
$$
\end{defn}

\begin{lem}
\label{brauer}
Let $C_{f}$ be a hyperelliptic curve over a number field $K$. Suppose that $x \in H^1(K, J[2])$. Then
$$
\sum_{v} q_{\H,v}(\mathrm{res}_v(x)) = 0,
$$
where $\mathrm{res}_v$ is the restriction map from $H^1(K, J[2])$ to $H^1(K_v, J[2])$.
\end{lem}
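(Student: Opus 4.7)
The plan is to show that $q_{\H,v}(\mathrm{res}_v(x))$ is the local invariant at $v$ of a single global Brauer class associated to $x$, and then invoke the Albert--Brauer--Hasse--Noether reciprocity law.

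First, I would observe that the Heisenberg extension
\[
1 \to \overline{K}^\times \to \mathcal{H}_{J,\Theta_J} \to J[2] \to 0
\]
is defined over $K$, since the divisor $\Theta_J$ and the functions $g$ with $\mathrm{div}(g) = 2\tau_x^*\Theta_J - 2\Theta_J$ are defined over $\overline{K}$ with the natural $G_K$-action compatible with the extension structure. Because $\overline{K}^\times$ is central in $\mathcal{H}_{J,\Theta_J}$, the associated connecting map of pointed sets yields a (quadratic, in general not additive) map
\[
q_\H : H^1(K, J[2]) \too H^2(K, \overline{K}^\times) = \mathrm{Br}(K).
\]
Thus, for the given $x \in H^1(K, J[2])$, we obtain a single global Brauer class $q_\H(x) \in \mathrm{Br}(K)$.

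Next I would use the functoriality of the construction with respect to base change, as noted in Remark \ref{5.6}. Concretely, the Heisenberg extension over $K_v$ is obtained from the one over $K$ by extending scalars, and the connecting map commutes with the restriction maps on cohomology. This gives the key compatibility
\[
q_{\H,v}(\mathrm{res}_v(x)) = \mathrm{res}_v(q_\H(x)) \in H^2(K_v, \overline{K}_v^\times) = \mathrm{Br}(K_v)
\]
for every place $v$ of $K$.

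Finally, I would apply the fundamental exact sequence of global class field theory, namely
\[
0 \too \mathrm{Br}(K) \too \bigoplus_v \mathrm{Br}(K_v) \xrightarrow{\;\sum \mathrm{inv}_v\;} \Q/\Z \too 0,
\]
which asserts that every element of $\mathrm{Br}(K)$ has trivial sum of local invariants. Applied to $q_\H(x) \in \mathrm{Br}(K)$, this yields
\[
\sum_v q_{\H,v}(\mathrm{res}_v(x)) = \sum_v \mathrm{inv}_v(\mathrm{res}_v(q_\H(x))) = 0,
\]
as desired. The main (minor) obstacle is simply verifying carefully that the Heisenberg extension is defined over $K$ and that its formation commutes with localization; once that functoriality is in place, the identity reduces to the classical reciprocity law for the Brauer group of a number field.
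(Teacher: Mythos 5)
Your proposal is correct and follows exactly the paper's own argument: the paper likewise defines the global Brauer class via the Heisenberg extension over $K$, invokes the functoriality of Remark \ref{5.6} to identify $q_{\H,v}(\mathrm{res}_v(x))$ with the local invariant of that class, and concludes by the exact sequence $0 \to \mathrm{Br}(K) \to \bigoplus_v \mathrm{Br}(K_v) \to \Q/\Z \to 0$. You have simply written out the details that the paper leaves implicit.
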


\begin{proof}
We have an exact sequence (see \cite[Theorem 8.1.17]{cohomology} for reference)
$$
\xymatrix{
0 \ar[r] & \mathrm{Br}(K) \ar[r] & \bigoplus_v \mathrm{Br}(K_v) \ar^-{\oplus \mathrm{inv}_v}[r] & \Q/\Z \ar[r] & 0.
}
$$
The lemma follows from the functoriality mentioned in Remark \ref{5.6}.
\end{proof}

\begin{lem}
\label{poonen}
Let $\k(J)$ be the function field of $J$. Suppose that $g \in \k(J)$ satisfies $\mathrm{div}(g) = 2\tau_x^*(\Theta_J) - 2\Theta_J$ for some $x \in J[2]$. Then
$g \circ [-1] = g$.
\end{lem}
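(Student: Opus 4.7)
The plan is to show that $g \circ [-1]$ and $g$ have the same divisor, deduce they differ by a nonzero constant $c$, argue that $c^2 = 1$, and finally rule out $c = -1$.

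For the divisor computation, $\mathrm{div}(g \circ [-1]) = [-1]^* \mathrm{div}(g) = 2[-1]^*\tau_x^*\Theta_J - 2[-1]^*\Theta_J$. The crucial observation is that $x \in J[2]$ forces $x = -x$, so the two morphisms $\tau_x \circ [-1]$ and $[-1] \circ \tau_x$ of $J$ coincide; combined with the symmetry $[-1]^*\Theta_J = \Theta_J$ from the remark following the definition of $\Theta_J$, this gives $[-1]^*\tau_x^*\Theta_J = \tau_x^*[-1]^*\Theta_J = \tau_x^*\Theta_J$. Hence $\mathrm{div}(g \circ [-1]) = \mathrm{div}(g)$, so $g \circ [-1] = c\,g$ for some nonzero constant $c$. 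Applying $[-1]$ once more and using $[-1]^2 = \mathrm{id}$ yields $g = c^2 g$, and therefore $c = \pm 1$.

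To rule out $c = -1$, I would exhibit a point $P \in J[2]$ at which $g$ is regular and nonzero: since $-P = P$, one then has $g(P) = (g \circ [-1])(P) = c\,g(P)$, forcing $c = 1$. Such a $P$ must lie in $J[2] \setminus (\Theta_J \cup \tau_x^* \Theta_J)$, and existence follows from counting: $|J[2]| = 2^{2g}$, while the 2-torsion on each translate of $\Theta_J$ admits an explicit description via subsets of Weierstrass points, and in generic situations the union is strictly smaller than $J[2]$.

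The main obstacle is handling borderline cases where this counting is tight. A robust fallback is a local-coordinate analysis at a smooth 2-torsion point $P$ of $\Theta_J$ not on $\tau_x^*\Theta_J$: choose coordinates $(t_1,\ldots,t_g)$ at $P$ with $[-1]^* t_i = -t_i$, which is possible since the differential of $[-1]$ at any 2-torsion fixed point is $-\mathrm{id}$. A local equation $s$ for $\Theta_J$ then has nonzero linear part $\sum_i c_i t_i$, whence $[-1]^* s = -s + O(t^2)$ and $(s \circ [-1])^2 = s^2 + O(t^3)$. Writing $g = u/s^2$ locally with $u$ a unit at $P$, the identity $g \circ [-1] = c\,g$ compared at the leading order in $t$ reduces to $u(P) = c\,u(P)$, forcing $c = 1$.
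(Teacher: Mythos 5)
Your first half (the divisor computation giving $g\circ[-1]=c\,g$ with $c=\pm1$) is exactly the paper's argument and is fine. The gap is in ruling out $c=-1$: both of your routes rest on existence claims about $2$-torsion points relative to the theta divisor that you do not establish. For the primary route the counting genuinely fails, not just in borderline cases: identifying $J[2]$ with subsets $S\subseteq\{1,\dots,2g+1\}$ of Weierstrass indices with $|S|\le g$, the $2$-torsion points off $\Theta_J$ are exactly those with $|S|=g$, so $|J[2]\setminus\Theta_J|=\binom{2g+1}{g}$, and $2\binom{2g+1}{g}<2^{2g}$ already for $g\ge4$; so one needs the actual combinatorics of which $S$ land on $\tau_x^*\Theta_J$, not a cardinality bound. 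Your fallback has the analogous unproved step: you need a $2$-torsion point that is a \emph{smooth} point of $\Theta_J$ (by the Riemann singularity theorem this forces $|S|\in\{g-2,g-1\}$) and that avoids $\tau_x^*\Theta_J$; this is true but requires an argument you have not given.

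The clean fix --- and what the paper does --- is to run your local-coordinate computation at the \emph{generic point} of $\Theta_J$ rather than at a closed fixed point of $[-1]$. Since $[-1]^*\Theta_J=\Theta_J$ while $\tau_x^*\Theta_J\ne\Theta_J$ for $x\ne0$ (the case $x=0$ being trivial), the involution acts on $\widehat{\O}_\eta\cong k(\Theta)[[t]]$ sending $t\mapsto\pm t+O(t^2)$, and $g$ has exact valuation $-2$ along $\Theta_J$; comparing the leading coefficients of $g$ and $[-1]^*g$ in $k(\Theta)((t))$ gives $c=1$ with no existence question to settle. I would replace the closed-point argument by this one, or else actually prove the existence of the required $2$-torsion point via the explicit description of $J[2]$ in terms of Weierstrass points.
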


\begin{proof}
Note that $\mathrm{div}([-1]^*(g)) = [-1]^*(2\tau_x^*(\Theta_J) - 2\Theta_J)
= 2\tau_{-x}^*(\Theta_J) - 2\Theta_J = \mathrm{div}(g)$ since $[-1]^*\Theta_J = \Theta_J$. Hence $[-1]^*g = cg$ for some constant $c$, and $c$ has to be either $1$ or $-1$.

Let $\eta$ be the generic point which corresponds to the divisor $\Theta_J$. Write $\widehat{\O}_{\eta}$ for the completion of the local ring $\O_\eta$ (since $\Theta_J$ is an irreducible divior, $\O_\eta$ is a discrete valuation ring). Then there is an isomorphism
$$
\widehat{\O}_{\eta} \cong k(\Theta)[[t]],
$$
where $k(\Theta)$ is the residue field of $\O_\eta$, and $t$ is a uniformizer. Then $[-1]^* \in \Aut(k(\Theta)[[t]])$ is induced by $[-1]$. Since $[-1]^*$ has order $2$, $[-1]^*$ sends $t$ to $(\pm t) + (\text{higher degree terms})$.  By assumption, $v_{\Theta}(g) = -2$, where $v_{\Theta}$ denote the valuation along $\Theta_J$. Hence if one views $g$ as an element in $k(\Theta)((t))$, it is immediate that $[-1]^*g$ and $g$ have the same leading  term. Therefore $c = 1$, and this completes the proof.
\end{proof}

Let $J'$ be the Jacobian of another hyperelliptic curve and $\lambda : J \to J'$ be an isomorphism over $\overline{K}_v$. By functoriality, the isomorphism $\lambda$ induces an isomorphism $\lambda^*:\mathcal{H}_{J',\Theta_{J'}} \to \mathcal{H}_{J,\Theta_J}$. It is easy to check that the map
$$
\mathrm{Isom}(J, J') \to \mathrm{Isom}(\mathcal{H}_{J',\Theta_{J'}}, \mathcal{H}_{J,\Theta_J})
$$
given by $\lambda \mapsto \lambda^*$ is a $G_{K_v}$-equivariant homomorphism.
Now we show that the induced quadratic forms above are indeed the same for all quadratic twists. The following theorem generalizes \cite[Lemma 5.2]{KMR}.
\begin{thm}
Suppose that $\chi\in \Xset(K_v)$. The canonical isomorphism $J[2] \cong J^\chi[2]$ identifies $\l_{J, v}$ and $\l_{J^\chi, v}$ for every place $v$.
\end{thm}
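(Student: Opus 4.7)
The plan is to invoke the functoriality of the Heisenberg construction. Fix a representative $d \in K_v^\times$ of $\chi$, and let $\lambda: J \to J^\chi$ denote the isomorphism induced by $(a,b) \mapsto (a, b/\sqrt{d})$ on affine models (Remark \ref{caniso}); it is defined over $K_v(\sqrt{d})$ and preserves the point at infinity, so $\lambda^* \Theta_{J^\chi} = \Theta_J$. Hence the rule $(x, g) \mapsto (\lambda^{-1}(x), g \circ \lambda)$ gives a well-defined group homomorphism $\lambda^*: \H_{J^\chi, \Theta_{J^\chi}} \to \H_{J, \Theta_J}$ fitting into a morphism between the exact sequences \eqref{heisenexact} for $J^\chi$ and for $J$: it is the identity on $\overline{K}_v^\times$ and equal to the restriction of $\lambda^{-1}$ on the $2$-torsion quotient. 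Crucially, that restriction is defined over $K_v$ (since on $2$-torsion points of the form $(a,0)$ the square root $\sqrt{d}$ disappears) and agrees with the canonical identification $J^\chi[2] \cong J[2]$ of Proposition \ref{canonicaliso}.

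The heart of the matter is to verify that $\lambda^*$ itself is $G_{K_v}$-equivariant, even though $\lambda$ is only defined over $K_v(\sqrt{d})$. For $\sigma \in G_{K_v}$ with $\sigma(\sqrt{d}) = -\sqrt{d}$, a direct computation at the level of affine curves gives $\lambda^\sigma = [-1]_{J^\chi} \circ \lambda = \lambda \circ [-1]_J$. By the functoriality of the assignment $\lambda \mapsto \lambda^*$ recorded just before the theorem, $(\lambda^*)^\sigma = (\lambda^\sigma)^* = \lambda^* \circ [-1]_{J^\chi}^*$, so Galois invariance reduces to the identity $[-1]^* = \mathrm{id}$ on $\H_{J^\chi, \Theta_{J^\chi}}$. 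But $[-1]^*$ sends a pair $(x, g)$ to $([-1]x, g \circ [-1])$: the first coordinate is unchanged since $[-1]$ acts trivially on $J^\chi[2]$, and the second coordinate is unchanged by Lemma \ref{poonen}. Thus $\lambda^*$ is $G_{K_v}$-equivariant, as required.

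Once $\lambda^*$ is known to be a $G_{K_v}$-equivariant morphism of the two Heisenberg short exact sequences, the functoriality of the connecting homomorphism in Galois cohomology produces a commutative square that identifies $\l_{J^\chi, v}$ with $\l_{J, v}$ under the canonical isomorphism $J^\chi[2] \cong J[2]$, which is exactly the content of the theorem. The only genuine obstacle in this plan is the $G_{K_v}$-equivariance step, and it is precisely there that Lemma \ref{poonen}—the invariance of Heisenberg lifts under the hyperelliptic involution—is indispensable: it is what erases the dependence of $\lambda$ on the auxiliary choice of $\sqrt{d}$ once one passes to the Heisenberg groups.
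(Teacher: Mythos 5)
Your proposal is correct and follows essentially the same route as the paper: fix the isomorphism $\lambda : J \to J^\chi$ over $\overline{K}_v^{\ker(\chi)}$, observe that $\lambda^\sigma = \lambda\circ[\pm1]$, and use Lemma \ref{poonen} (together with the trivial action of $[-1]$ on $2$-torsion) to conclude that $\lambda^*$ is $G_{K_v}$-equivariant, so the connecting homomorphisms agree. The extra details you supply (that $\lambda$ preserves $\infty$ and hence identifies the theta divisors, and the explicit formula for $\lambda^*$ on pairs $(x,g)$) are correct and merely make explicit what the paper leaves implicit.
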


\begin{proof}
Fix an isomorphism $\lambda: J \to J^\chi$ defined over the field $\overline{K}_v^{\mathrm{Ker}(\chi)}$. For every $\sigma \in G_{K_v}$, we have
$$
\lambda^\sigma = \lambda \circ [\chi(\sigma)] = \lambda \circ [\pm1].
$$
Hence $(\lambda^*)^\sigma = (\lambda^\sigma)^* = [\pm1]^* \circ \lambda^*$.  For all $g$ such that $\mathrm{div}(g) = 2\tau_x^*(\Theta_J) - 2\Theta_J$ for some $x \in J[2]$, we have $[-1]^*g = g$ by Lemma \ref{poonen}. Therefore $(\lambda^*)^\sigma = \lambda^*$ for all $\sigma \in G_{K_v}$, whence $\l_{J,v} = \l_{J^\chi, v}$ since the following diagram commutes.
$$
\xymatrix{
1 \ar[r] & \overline{K}_v^\times \ar[r] & \mathcal{H}_{J,\Theta_J} \ar[r] & J[2] \ar[r] & 0 \\
1 \ar[r] & \overline{K}_v^\times \ar@{=}[u] \ar[r] & \mathcal{H}_{J^\chi, \Theta_{J^\chi}} \ar[r] \ar^-{\simeq}_{\lambda^*}[u] & J^\chi[2] \ar[r] \ar^-{\simeq}[u]& 0.
}
$$
\end{proof}

Combining this theorem with \cite[Theorem 3.9]{KMR}, we get the following.
\begin{thm}
\label{parity}
Suppose that $J$ is the Jacobian of a hyperelliptic curve $C_{f}$ over a number field $K$. Suppose that $\chi \in \Xset(K)$. Then
$$
 \dim_{\Ftwo}\Sel_{2}(J/K) -  \dim_{\Ftwo}\Sel_{2}(J^\chi/K) \equiv \displaystyle{\sum_{v}}h_v(\chi_v) (\textrm{mod } 2),
$$
where $\chi_v$ is the restriction of $\chi$ to $G_{K_v}$ and $h_v$ is given in Definition \ref{hv}.
\end{thm}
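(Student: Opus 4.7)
The plan is to deduce this by combining the preceding theorem (that $\l_{J,v} = \l_{J^\chi,v}$ under the canonical identification $J[2] \cong J^\chi[2]$) with the abstract parity formula \cite[Theorem 3.9]{KMR}, which computes the parity difference of two Selmer groups attached to a common self-dual Galois module, given that their local conditions are Lagrangian with respect to a \emph{common} family of quadratic forms and agree outside a finite set of places.

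First I would identify $J^\chi[2]$ with $J[2]$ as $G_K$-modules via Proposition \ref{canonicaliso}, and rewrite both Selmer groups $\Sel_2(J/K)$ and $\Sel_2(J^\chi/K)$ as subgroups of the single cohomology group $H^1(K,J[2])$ cut out by the local conditions $\{\alpha_v(1_v)\}_v$ and $\{\alpha_v(\chi_v)\}_v$, respectively. By Lemma \ref{selfdual} applied to $J$ and to $J^\chi$, each of these local conditions is its own orthogonal complement under the Tate local pairing \eqref{tld}; by \cite[Corollary 4.7]{poonenrain} this pairing is the bilinear form associated to the Heisenberg quadratic form $q_{\H,v}$, and by \cite[Proposition 4.9]{poonenrain} both $\alpha_v(1_v)$ and $\alpha_v(\chi_v)$ are in fact Lagrangian for $q_{\H,v}$. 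The preceding theorem ensures that this quadratic form is the \emph{same} function on $H^1(K_v,J[2])$ whether we compute it via $J$ or via $J^\chi$, so the two Selmer structures really live inside one and the same family of metabolic spaces.

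Next I would check the two remaining hypotheses of \cite[Theorem 3.9]{KMR}. The hypothesis that the local conditions agree at almost all places follows from Lemma \ref{localzero}: at every $v\nmid 2\infty$ where $J$ has good reduction and $\chi_v$ is unramified (all but finitely many $v$), we have $\alpha_v(1_v) = \alpha_v(\chi_v)$. The global compatibility hypothesis, that $\sum_v q_{\H,v}\circ \mathrm{res}_v$ vanishes on the image of $H^1(K,J[2])$, is exactly Lemma \ref{brauer}. With all inputs verified, \cite[Theorem 3.9]{KMR} yields
\[
\dim_{\Ftwo}\Sel_2(J/K) - \dim_{\Ftwo}\Sel_2(J^\chi/K) \equiv \sum_v \dim_{\Ftwo}\bigl(\alpha_v(1_v)/(\alpha_v(1_v)\cap\alpha_v(\chi_v))\bigr) \pmod 2,
\]
and the right-hand summand is $h_v(\chi_v)$ by Definition \ref{hv}.

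The only genuine obstacle is the verification that the Heisenberg quadratic form is intrinsic to the module $J[2]$ (independent of the twist), and this has already been handled in the theorem immediately preceding; everything else amounts to packaging the earlier lemmas into the hypotheses of the KMR machine. Note that the sum on the right is finite by the almost-everywhere vanishing in Lemma \ref{localzero}, so the congruence is well-defined.
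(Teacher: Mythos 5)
Your proposal is correct and follows exactly the route the paper takes: the paper's entire proof is the one-line remark that Theorem \ref{parity} follows by combining the preceding theorem (equality of the Heisenberg quadratic forms $\l_{J,v}$ and $\l_{J^\chi,v}$ under $J[2]\cong J^\chi[2]$) with \cite[Theorem 3.9]{KMR}. You have merely spelled out the verification of the hypotheses of the KMR machine (Lagrangian local conditions via Lemma \ref{selfdual} and Poonen--Rains, agreement at almost all places via Lemma \ref{localzero}, and global reciprocity via Lemma \ref{brauer}), all of which is accurate.
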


\section{$2$-Selmer ranks of hyperelliptic curves}
Let $K$ be a number field and $f\in K[X]$ be a separable polynomial of odd degree ($\ge 3$) such that $\alpha_1, \alpha_2, \cdots \alpha_n$ are the roots of $f$. By an appropriate transformation of $C_f$, we may assume that $\alpha_i$ are algebraic integers. We are mainly interested in the case where $\Gal(f) \cong S_n$ or $A_n$.

Let $\Delta_f$ (:$=\prod_{i < j}(\alpha_i - \alpha_j)^2)$ be the discriminant of the polynomial $f$. Let $\Sigma$ be a set of primes containing all archimedean places, all primes above $2$, and all primes that divide $\Delta_f$ (hence $C_f$, so $J(C_f)$ also, has good reduction at all primes not in $\Sigma$). We enlarge $\Sigma$ so that $\Pic(\O_{K, \Sigma}) = 1$, where $\O_{K, \Sigma}$ is the ring of $\Sigma$-integers, and fix it from now on. Note that
\begin{align*}
\sqrt{\Delta_f} \in \O_{K, \Sigma}^\times & \textrm{ if } \Gal(f) = A_n, \text{ and } \\
\sqrt{\Delta_f} \not\in \O_{K, \Sigma}^\times & \textrm{ if } \Gal(f) = S_n.
\end{align*}

\begin{lem}
\label{deltaf}
\begin{enumerate}
\item
If $\l \in \cP_i$ for some even $i$ and $\chi_\l \in \Xset(K_\l)$, then $\chi_\l(\Delta_f) = 1.$
\item
If $\l \in \cP_i$ for some odd $i$ and $\chi_\l \in \Xset(K_\l)$, then $\chi_\l(\Delta_f) = 1$ if and only if $\chi_\l$ is unramified.
\end{enumerate}
\end{lem}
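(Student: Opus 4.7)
The plan is to reduce both assertions to the question of whether $\Delta_f \in (K_\l^\times)^2$, and then finish via a Hilbert-symbol computation for the odd-$i$ case. Since $\l \notin \Sigma$, the element $\Delta_f$ is a unit at $\l$ and the residue characteristic at $\l$ is odd, so the extension $K_\l(\sqrt{\Delta_f})/K_\l$ is either trivial or the unique unramified quadratic extension of $K_\l$.

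To decide which case occurs, I will use the cycle-structure calculation from Lemma \ref{dcyc}: $\l \in \cP_i$ is equivalent to $\Frob_\l$ having exactly $i+1$ orbits on the roots $\{\alpha_1, \dots, \alpha_n\}$ of $f$. A permutation of $n$ letters with $c$ orbits has sign $(-1)^{n-c}$, so since $n$ is odd we obtain $\mathrm{sign}(\Frob_\l) = (-1)^{n-(i+1)} = (-1)^i$. Because $\sqrt{\Delta_f} = \prod_{j<k}(\alpha_j-\alpha_k)$ is fixed precisely by $A_n$, this shows $\Delta_f \in (K_\l^\times)^2$ when $i$ is even, and $K_\l(\sqrt{\Delta_f})/K_\l$ is the unramified quadratic extension when $i$ is odd.

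Part (i) is then immediate, since any $\chi_\l \in \Xset(K_\l)$ kills squares. For part (ii), let $d \in K_\l^\times/(K_\l^\times)^2$ be the Kummer class corresponding to $\chi_\l$, so that $\chi_\l(\Delta_f) = (\Delta_f, d)_\l$ as Hilbert symbols. By symmetry of the Hilbert symbol this equals $1$ iff $d$ is a norm from $K_\l(\sqrt{\Delta_f})/K_\l$, and for the unramified quadratic extension of a local field of odd residue characteristic the norm group is precisely $\{a \in K_\l^\times : v_\l(a) \text{ is even}\}$. Hence $\chi_\l(\Delta_f) = 1$ iff $v_\l(d)$ is even, iff $K_\l(\sqrt{d})/K_\l$ is unramified (possibly trivial), iff $\chi_\l$ is unramified. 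The only nontrivial ingredient is the parity translation between the orbit count of $\Frob_\l$ and the sign of $\Frob_\l$, which relies in an essential way on the oddness of $n$; I anticipate no genuine obstacle beyond this bookkeeping.
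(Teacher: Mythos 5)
Your proof is correct and follows essentially the same route as the paper: both reduce the lemma to whether $\Delta_f$ is a square in $K_\l^\times$ by converting the orbit count of $\Frob_\l$ into its sign (using that $n$ is odd), and then use that $\Delta_f$ is a unit at $\l \notin \Sigma$. Your Hilbert-symbol/norm-group finish for (ii) is just a repackaging of the paper's observation that $\Delta_f$ generates $\O_\l^\times/(\O_\l^\times)^2$, so a character kills $\Delta_f$ exactly when it is unramified.
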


\begin{proof}
It is well-known that $\sqrt{\Delta_f}$ is fixed exactly by even permutations. The condition $\l \in \cP_i$ is equivalent to $\Frob_\l|_{K(J[2])} \in S_n$ being a product of $i+1$ disjoint cycles by Lemma \ref{dcyc}. Therefore if $i$ is even, then  $\Frob_\l|_{K(J[2])}$ is an even permutation because $n$ is odd, so $\sqrt{\Delta_f} \in K_\l$. In other words, $\Delta_f \in (K_\l^\times)^2$; i.e., $\chi_\l(\Delta_f) = 1$ for all $\chi_\l \in \Xset(K_\l)$. This shows (i). If $i$ is odd, then $\Frob_\l|_{K(J[2])}$ is an odd permutation, so it does not fix  $\sqrt{\Delta_f}$. Hence $\Delta_f \not\in (K_\l^\times)^2$. Therefore by the definition of $\Sigma$ and $\cP_i$(not intersecting $\Sigma$), the discriminant $\Delta_f$ must generate $\O_\l^\times/(\O_\l^\times)^2 \cong \Z/2\Z$, from which (ii) follows.
\end{proof}

\begin{lem}
\label{kernelofthemaptopzero}
Define $\A \subset K^\times/(K^\times)^2$ by
$$
\A := \ker(\O_{K, \Sigma}^\times /(\O_{K, \Sigma}^\times)^2 \to \prod_{\l \in \cP_0}\O_\l^\times/(\O_\l^\times)^2).
$$
Then $\A$ is generated by $\Delta_f$ if $\Gal(f) \cong S_n$, and $\A$ is trivial if $\Gal(f) \cong A_n$.
\end{lem}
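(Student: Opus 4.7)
The plan is to mirror the Chebotarev argument of Lemma~\ref{pzeroinj}, but with $p=2$ one must account for the fact that $S_n$ does have a normal subgroup of index $2$ (namely $A_n$). First, I would dispose of one direction: if $\Gal(f) \cong S_n$, then because $n$ is odd an $n$-cycle is even, hence $\Frob_\l$ lies in $A_n$ whenever $\l \in \cP_0$ (recall $\cP_0$ consists of $\l \notin \Sigma$ with $\Frob_\l$ an $n$-cycle, by Lemma~\ref{dcyc}). In particular $\Frob_\l$ fixes $\sqrt{\Delta_f}$, so $\Delta_f \in (\O_\l^\times)^2$ for every $\l \in \cP_0$; equivalently, $\Delta_f \in \A$. (In the $A_n$ case, $\Delta_f$ is already trivial in $K^\times/(K^\times)^2$, consistent with the claim that $\A$ is trivial.)

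For the other direction, suppose $\alpha \in \O_{K,\Sigma}^\times$ represents a nontrivial class in $\A$, and consider the quadratic extension $K(\sqrt{\alpha})/K$. The key dichotomy is whether $K(\sqrt{\alpha})$ is contained in $K(J[2])$ or not. If $K(\sqrt\alpha) \subset K(J[2])$, it corresponds to an index-$2$ subgroup of $\Gal(f)$. When $\Gal(f) \cong A_n$ with $n$ odd and $n \ge 3$, no such subgroup exists ($A_3$ is cyclic of odd order, and $A_n$ is simple for $n\ge5$), so this case is vacuous; when $\Gal(f) \cong S_n$, the only such subgroup is $A_n$, forcing $K(\sqrt\alpha) = K(\sqrt{\Delta_f})$ and hence $\alpha \equiv \Delta_f$ in $K^\times/(K^\times)^2$.

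If instead $K(\sqrt\alpha) \not\subset K(J[2])$, then $K(\sqrt\alpha)$ and $K(J[2])$ are linearly disjoint over $K$, so
\[
\Gal\bigl(K(\sqrt\alpha)K(J[2])/K\bigr) \;\cong\; \Gal(f) \times \Z/2\Z.
\]
Since $n$ is odd, an $n$-cycle lies in $A_n \subset S_n$, so in both cases $\Gal(f)$ contains $n$-cycles. Pick $\sigma = (\tau,1)$ with $\tau$ an $n$-cycle, and apply the Chebotarev density theorem to produce (infinitely many) primes $\l \notin \Sigma$ with $\Frob_\l$ in the conjugacy class of $\sigma$. Such $\l$ satisfy $\Frob_\l|_{K(J[2])}$ is an $n$-cycle (so $\l \in \cP_0$) while $\Frob_\l|_{K(\sqrt\alpha)}$ is nontrivial, which forces $\alpha \notin (\O_\l^\times)^2$ (since $\alpha$ is a unit at $\l$, nontriviality in $K_\l^\times/(K_\l^\times)^2$ is equivalent to nontriviality in $\O_\l^\times/(\O_\l^\times)^2$). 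This contradicts $\alpha \in \A$.

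Combining the two cases finishes the proof: in the $S_n$ case, $\A$ is generated by $\Delta_f$, and in the $A_n$ case (where Case~1 is impossible), $\A$ is trivial. The one point requiring care is that the linear-disjointness step fails precisely when an index-$2$ subgroup of $\Gal(f)$ exists; isolating this as the only obstruction (and identifying it with $\Delta_f$) is the main content, and everything else is a routine application of Chebotarev together with Lemma~\ref{dcyc} to identify $\cP_0$ with the class of $n$-cycles.
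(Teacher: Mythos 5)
Your proof is correct and follows essentially the same route as the paper: identify $\cP_0$ with the primes whose Frobenius is an $n$-cycle, observe that $K(\sqrt{\Delta_f})$ is the unique quadratic subextension of $K(J[2])/K$ in the $S_n$ case (and that none exists in the $A_n$ case), and use linear disjointness plus Chebotarev to rule out any other nontrivial class. The only cosmetic issue is the notation $\sigma=(\tau,1)$, where from context you clearly intend $1$ to be the \emph{nontrivial} element of $\Z/2\Z$ acting on $K(\sqrt{\alpha})$.
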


\begin{proof}
If $\Gal(f) \cong S_n$, there is only one intermediate field $K(\sqrt{\Delta_f})$ between $K$ and $K(J[2])$ of degree $2$ over $K$. Hence if $\alpha \in \O_{K, \Sigma}^\times /(\O_{K, \Sigma}^\times)^2$ is not equal to $\Delta_f$, then $K(\sqrt{\alpha})$ and $K(J[2])$ are linearly disjoint over $K$. Then by the Chebotarev Density Theorem, there exists a prime $\l$ and $\Frob_\l \in \Gal(K(J[2])K(\sqrt{\alpha})/K)$ such that
\begin{itemize}
\item
$\Frob_\l(\sqrt{\alpha}) = -\sqrt{\alpha}, \text{ and }$
\item
$\Frob_\l|_{K(J[2])} \in \Gal(f) = S_n$ is an $n$-cycle.
\end{itemize}
By Lemma \ref{cycle} and the conditions on $\Frob_\l$, we have $\l \in \cP_0$ and $\sqrt{\alpha} \not\in \O_\l^\times$, so $\alpha \not \in \A$. Lemma \ref{deltaf}(i) with $i=0$ shows that $\Delta_f \in \A$. If $\Gal(f) \cong A_n$, the same argument with $\Delta_f \in (\O_{K, \Sigma}^\times)^2$ shows that $\A$ is trivial.
\end{proof}

For a prime $\l$, we write $\mathrm{res}_\l(\Sel_2^{\l}(J/K))$ for the image of $\Sel_2^\l(J/K)$ of the map
$$
\mathrm{res}_\l : H^1(K, J[2]) \to H^1(K_\l, J[2]).
$$

\begin{lem}
\label{resrelaxedselmer}
The $\Ftwo$-vector space $\mathrm{res}_\l(\Sel_{2}^{\l}(J/K))$ is a Lagrangian subspace in the metabolic space $(H^1(K_\l, J[2]), q_{J, \l}),$ where $q_{J, \l}$ is the quadratic form arising from the Heisenberg group of $J[2]$.
\end{lem}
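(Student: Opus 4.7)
The plan is to exploit the global reciprocity for the Heisenberg quadratic form (Lemma \ref{brauer}) together with the dimension count provided by Lemma \ref{ptd}. Specifically, I will verify the two defining properties of a Lagrangian subspace separately: (a) the quadratic form $q_{J,\ell}$ vanishes on $\mathrm{res}_\ell(\Sel_2^\ell(J/K))$; (b) the image has dimension equal to half of $d_2(H^1(K_\ell, J[2]))$.

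For (a), let $x \in \Sel_2^\ell(J/K)$. By definition, $\mathrm{res}_v(x) \in \alpha_v(1_v)$ for every place $v \neq \ell$. The paper cites the Poonen--Rains result that $\alpha_v(1_v)$ is a Lagrangian subspace of $(H^1(K_v,J[2]), q_{J,v})$, so in particular $q_{J,v}(\mathrm{res}_v(x)) = 0$ for all $v \neq \ell$. By Lemma \ref{brauer},
\[
0 \;=\; \sum_{v} q_{J,v}(\mathrm{res}_v(x)) \;=\; q_{J,\ell}(\mathrm{res}_\ell(x)),
\]
so $q_{J,\ell}$ vanishes on $\mathrm{res}_\ell(\Sel_2^\ell(J/K))$. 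In particular the associated bilinear form also vanishes on this subspace, so $\mathrm{res}_\ell(\Sel_2^\ell(J/K)) \subseteq \mathrm{res}_\ell(\Sel_2^\ell(J/K))^\perp$.

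For (b), note that the kernel of $\mathrm{res}_\ell\colon \Sel_2^\ell(J/K) \to H^1(K_\ell, J[2])$ is exactly $\Sel_{2,\ell}(J/K)$, since imposing the trivial local condition at $\ell$ on $\Sel_2^\ell$ returns $\Sel_{2,\ell}$. Hence
\[
d_2(\mathrm{res}_\ell(\Sel_2^\ell(J/K))) \;=\; d_2(\Sel_2^\ell(J/K)) - d_2(\Sel_{2,\ell}(J/K)) \;=\; \tfrac{1}{2}\, d_2(H^1(K_\ell, J[2])),
\]
where the last equality is the numerical statement in Lemma \ref{ptd}. Combined with part (a), this forces $\mathrm{res}_\ell(\Sel_2^\ell(J/K)) = \mathrm{res}_\ell(\Sel_2^\ell(J/K))^\perp$, i.e.\ the image is a Lagrangian subspace.

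The only delicate point is making sure the quadratic form $q_{J,\ell}$ (not just the bilinear form) vanishes on the image; this is precisely what the Heisenberg construction, together with the global reciprocity of Lemma \ref{brauer}, buys us over a purely bilinear argument. Once that is in hand, the proof is a short two-line synthesis of Lemma \ref{brauer} and Lemma \ref{ptd}.
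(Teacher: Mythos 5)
Your proof is correct and follows essentially the same route as the paper's: the dimension count from Lemma \ref{ptd} plus the vanishing of $q_{J,\ell}$ on the image, deduced from the Poonen--Rains fact that each $\alpha_v(1_v)$ is Lagrangian together with the global reciprocity of Lemma \ref{brauer}. You have merely written out explicitly the steps the paper compresses into a citation of \cite[Proposition 4.9]{poonenrain}, \cite[Corollary 4.7]{poonenrain}, and Lemma \ref{brauer}.
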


\begin{proof}
By Lemma \ref{ptd},
$$
d_2(\mathrm{res}_\l(\Sel_{2}^{\l}(J/K))) = \frac{1}{2}d_2(H^1(K_\l, J[2])).
$$
Then \cite[Proposition 4.9]{poonenrain}, \cite[Corollary 4.7]{poonenrain} and Lemma \ref{brauer} show that $\mathrm{res}_\l(\Sel_{2}^{\l}(J/K))$ is a Lagrangian subspace.
\end{proof}

Let $\l$ be a place where $\l\nmid p\infty$ and $J$ has good reduction. Recall that the localization map
$$
\loc_\l: \Sel_2(J/K) \to \alpha_\l(1_\l) \cong J[2]/(\Frob_\l-1)J[2]
$$
is given by evaluating cocycles at $\Frob_\l$. The following two Propositions are the main ingredient of Theorem \ref{selmerrankpm2}.

\begin{lem}
\label{rs}
Suppose that $\Gal(f) \cong A_n$ or $S_n$. For an even number $i$, suppose that $\ell \in \cP_i$ and $\psi_\ell \in \Xset(K_\ell)$. Then, there is a $\chi \in \Xset(K)$ such that
$$\Sel_2(J^\chi/K) = \Sel_2(J, \chi_\ell).$$
\end{lem}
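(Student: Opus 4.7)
The plan is to construct a global character $\chi \in \Xset(K)$ whose local restrictions are forced to match the desired local Selmer conditions: $\chi_\ell = \psi_\ell$; $\chi_v = 1_v$ for every $v \in \Sigma$ with $v \neq \ell$; and $\chi_\l$ unramified at every $\l \in \cP \setminus \cP_0$ with $\l \neq \ell$. At primes $\l \in \cP_0$ with $\l \neq \ell$ no constraint is needed, because $d_2(\alpha_\l(1_\l)) = 0$ forces $H^1(K_\l, J[2]) = 0$ by Remark \ref{twistselfdual}. Once such a $\chi$ exists, Lemma \ref{localzero} immediately gives $\alpha_v(\chi_v) = \alpha_v(1_v)$ at every place $v \neq \ell$, so the local conditions defining $\Sel_2(J^\chi/K)$ and $\Sel_2(J, \chi_\ell) = \Sel_2(J, \psi_\ell)$ coincide and the desired equality follows.

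To produce $\chi$, I would use Lemma \ref{KMRlemma}(i) in the style of the proof of Proposition \ref{hoohoo}. First, enlarge $\Sigma$, if necessary, so that $\Pic(\O_{K, \Sigma(\ell)}) = 0$ (justified by Remark \ref{largerthansigma}), and set
\[
M := \O_{K, \Sigma(\ell)}^\times,\qquad G := \prod_{\l \in \cP_0,\, \l \neq \ell} \O_\l^\times,\qquad H := \prod_{\l \in \cP \setminus \cP_0,\, \l \neq \ell} \O_\l^\times \times \prod_{v \in \Sigma(\ell)} K_v^\times,
\]
so that global class field theory yields an identification $\Xset(K) \cong \Hom((G \times H)/M, \bmu_2)$. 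Let $g \in \Hom(H, \bmu_2)$ be the prescribed homomorphism: equal to $\psi_\ell$ on the $K_\ell^\times$-factor, trivial on every $K_v^\times$-factor for $v \in \Sigma \setminus \{\ell\}$, and trivial on every $\O_\l^\times$-factor appearing in $H$. By Lemma \ref{KMRlemma}(i), $g$ lifts to the desired $\chi \in \Xset(K)$ precisely when $g(\pi_H(M_0)) = 1$, where $M_0 := \ker(\pi_G : M \to G/G^2)$.

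The main obstacle is verifying this lifting condition, and this is exactly where the parity hypothesis on $i$ is used. Since $\psi_\ell$ is quadratic it suffices to check it on a set of representatives of $M_0/M^2$, and for $u \in M_0$ one computes $g(\pi_H(u)) = \psi_\ell(u)$ directly from the definition of $g$. The proof of Lemma \ref{kernelofthemaptopzero} transfers verbatim to $\Sigma(\ell)$ in place of $\Sigma$ (the Chebotarev step only needs to avoid a finite set of primes, so one can further avoid $\Sigma(\ell)$), yielding that $M_0/M^2$ is generated by $\Delta_f$ if $\Gal(f) \cong S_n$ and is trivial if $\Gal(f) \cong A_n$. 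In the nontrivial case, the remaining condition $\psi_\ell(\Delta_f) = 1$ is exactly Lemma \ref{deltaf}(i) applied under the assumption that $i$ is even. The character $\chi$ produced in this way then satisfies $\Sel_2(J^\chi/K) = \Sel_2(J, \chi_\ell)$ by the matching of local conditions described in the first paragraph.
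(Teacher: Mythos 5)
Your proposal is correct and follows essentially the same route as the paper: the same class-field-theoretic decomposition $\Xset(K)=\Hom((G\times H)/M,\bmu_2)$, the same appeal to Lemma \ref{KMRlemma} with the lifting obstruction computed via Lemma \ref{kernelofthemaptopzero}, and the same use of Lemma \ref{deltaf}(i) to see that $\psi_\ell(\Delta_f)=1$ when $i$ is even. Your explicit observation that no constraint is needed at primes of $\cP_0$ because $H^1(K_\l,J[2])=0$ there is a slightly more careful version of the paper's final appeal to Lemma \ref{localzero}, but the argument is the same.
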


\begin{proof}
Recall that $\Pic(\O_{K,\Sigma}) = 0$, so $\Pic(\O_{K,\Sigma(\ell)}) = 0$. Thus global class field theory shows that
$$
\Xset(K) = \Hom(\iK/K^\times, \pm1) = \textstyle\Hom((\prod_{v \in \Sigma(\ell)}K_v^\times \times
      \prod_{\l\notin\Sigma(\ell)}\O_\l^\times)/\O_{K,\Sigma(\ell)}^\times, \pm1).
$$
Let
\begin{align*}
Q &:= \cP - \{\cP_0 \cup \Sigma(\ell)\}, \\
M &:= \O_{K,\Sigma(\ell)}^\times ,\\
G &:= \prod_{\q \in \cP_0}\O_\l^\times, \text{ and }\\
H &:= \prod_{\l \in Q }\O_\l^\times \times \prod_{v \in \Sigma(\ell)}K_v^\times.
\end{align*}
Define a map $\phi$
\begin{align*}
\phi: \Xset(K) = \Hom((G\times H)/M, \pm1) \too & \Hom(H, \pm1) \\
& \cong \prod_{\l \in Q }\Hom(\O_\l^\times, \pm1) \times \prod_{v \in \Sigma(\ell)}\Hom(K_v^\times, \pm1).
\end{align*}
Then by Lemma \ref{kernelofthemaptopzero} and Lemma \ref{KMRlemma}, $\phi$ is surjective if $\Gal(f) \cong A_n$, and $\mathrm{Im}(\phi)$ is exactly
$\{g \in \Hom(H,\pm1): g(\Delta_f) = 1\}$ if $\Gal(f) \cong S_n$. In either case, for all local characters $\psi_\ell \in \Xset(K_\ell)$, there is a global character $\chi \in \Xset(K)$ such that
\begin{itemize}
\item
$\chi_{\ell} = \psi_{\ell}$,
\item
$\chi_{\q}|_{\O_\l^{\times}} = 1_\q$ for $q \in Q$,
\item
$\chi_{v} = 1_v$ for $v \in \Sigma$
\end{itemize}
by Lemma \ref{deltaf}, where $\chi_\ell, \chi_\l, \chi_v$ are the restrictions of $\chi$ to $G_{K_\ell}, G_{K_\l}, G_{K_v}$, respectively. For example, if $\Gal(f) \cong S_n$, the existence of such a $\chi$ can be seen by Lemma \ref{deltaf}(i). Then by Lemma \ref{localzero}, $\alpha_\p(1_\p) = \alpha_\p(\chi_\p)$ for all places $\p$ except $\ell$.
\end{proof}

\begin{prop}
\label{localtwistdecreasebytwo}
Suppose that $\Gal(f) \cong A_n$ or $S_n$ and suppose that $d_2(\Sel_2(J/K)) \ge 2$. Then there exist infinitely many $\chi \in \Xset(K)$ such that
$$
d_2(\Sel_2(J^\chi/K)) = d_2(\Sel_2(J/K)) - 2.
$$
\end{prop}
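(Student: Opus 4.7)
The plan is to twist $J$ at a single carefully chosen prime $\ell\in\cP_2$ whose Frobenius has exactly three orbits on the roots of $f$, so that both $\alpha_\ell(1_\ell)$ and $\alpha_\ell(\chi_\ell)$ are two-dimensional Lagrangian subspaces of the four-dimensional metabolic space $H^1(K_\ell,J[2])$. If we can additionally arrange that the Selmer--localization image $V:=\mathrm{res}_\ell(\Sel_2^\ell(J/K))$ equals $\alpha_\ell(1_\ell)$ itself, then picking $\chi_\ell$ ramified forces $V\cap\alpha_\ell(\chi_\ell)=0$, and this collapses $\Sel_2(J^\chi/K)$ onto $\Sel_{2,\ell}(J/K)$, producing the desired drop of exactly $2$ in the $2$-Selmer rank.

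For the execution, first note that since $n$ is odd and $\ge 3$, both $S_n$ and $A_n$ contain an element $\sigma$ with exactly three cycles on $\{1,\dots,n\}$ (the parity is $(-1)^{n-3}=+1$, so such $\sigma$ lies in $A_n$). By Lemma \ref{cycle}, $d_2(J[2]/(\sigma-1)J[2])=2$. Since $d_2(\Sel_2(J/K))\ge 2$ by hypothesis, choose a surjective homomorphism $\phi:\Sel_2(J/K)\twoheadrightarrow J[2]/(\sigma-1)J[2]$. Applying Lemma \ref{3.5} (whose hypotheses hold under $(\ast)$) yields $\rho\in G_K$ with $\rho|_{K(J[2])K^{\mathrm{ab}}}=\sigma|_{K(J[2])K^{\mathrm{ab}}}$ and $c(\rho)=\phi(c)$ for every $c\in\Sel_2(J/K)$. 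Choose a finite Galois extension $N/K$ containing $K(J[2])K^{\mathrm{ab}}$ and large enough that the restriction of every cocycle representing an element of $\Sel_2(J/K)$ to $G_N$ is trivial; the Chebotarev density theorem then produces infinitely many primes $\ell\notin\Sigma$ with $\Frob_\ell$ in the conjugacy class of $\rho$ in $\Gal(N/K)$. Each such $\ell$ lies in $\cP_2$ (Lemma \ref{dcyc}), and the identification of Lemma \ref{hardtofindref} makes $\loc_\ell:\Sel_2(J/K)\to\alpha_\ell(1_\ell)$ coincide with $\phi$, hence surjective.

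By Lemma \ref{resrelaxedselmer}, $V=\mathrm{res}_\ell(\Sel_2^\ell(J/K))$ is a Lagrangian of dimension $2$ in $H^1(K_\ell,J[2])$, and surjectivity of $\loc_\ell$ gives $V\cap\alpha_\ell(1_\ell)=\alpha_\ell(1_\ell)$; comparing dimensions of two Lagrangians in a $4$-dimensional metabolic space yields $V=\alpha_\ell(1_\ell)$. Apply Lemma \ref{rs} (valid since $\ell\in\cP_2$ and $i=2$ is even) with $\psi_\ell\in\Xset_\ram(K_\ell)$ a ramified character, obtaining $\chi\in\Xset(K)$ whose local restriction at $\ell$ is $\psi_\ell$ and with $\alpha_\p(\chi_\p)=\alpha_\p(1_\p)$ for every $\p\ne\ell$; thus $\Sel_2(J^\chi/K)=\Sel_2(J,\chi_\ell)$. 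Since $J$ has good reduction at $\ell\notin\Sigma$ and $\chi_\ell$ is ramified, Lemma \ref{ramd} gives $\alpha_\ell(\chi_\ell)\cap\alpha_\ell(1_\ell)=0$, hence $V\cap\alpha_\ell(\chi_\ell)=0$. Therefore
\[
\Sel_2(J^\chi/K)=\{x\in\Sel_2^\ell(J/K):\mathrm{res}_\ell(x)\in V\cap\alpha_\ell(\chi_\ell)\}=\Sel_{2,\ell}(J/K),
\]
and Lemma \ref{ptd} combined with the surjectivity of $\loc_\ell$ onto a $2$-dimensional space yields $d_2(\Sel_2(J^\chi/K))=d_2(\Sel_2(J/K))-2$. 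Different primes $\ell$ produce twists $\chi$ ramified at different places, pairwise distinct in $K^\times/(K^\times)^2$, so we obtain infinitely many such $\chi$.

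The chief subtlety is arranging the two conditions $V=\alpha_\ell(1_\ell)$ and $\alpha_\ell(\chi_\ell)\cap\alpha_\ell(1_\ell)=0$ simultaneously. Both rely on the Lagrangian structure coming from the Heisenberg quadratic form on $H^1(K_\ell,J[2])$ developed in Section 5: the first through control of the image of $\loc_\ell$ (obtained via Lemma \ref{3.5} with a three-orbit $\sigma$), and the second through the ramified-character machinery of Lemma \ref{ramd}. The count of complementary Lagrangians given by Lemma \ref{numberoflaglangian} (here $2^{1}=2$ when $n=2$) matches exactly the two ramified characters available at $\ell$, which is what makes the codimension-$2$ drop achievable through a single local twist rather than only a codimension-$1$ drop.
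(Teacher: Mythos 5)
Your argument follows the same route as the paper's proof: pick $\ell\in\cP_2$ with $\loc_\ell$ surjective onto the two-dimensional $\alpha_\ell(1_\ell)$ (via Lemma \ref{3.5} and Chebotarev), observe that $\Sel_2^\ell(J/K)=\Sel_2(J/K)$ so that any ramified $\psi_\ell$ cuts the Selmer group down to $\Sel_{2,\ell}(J/K)$ by Lemma \ref{ramd}, and globalize with Lemma \ref{rs}. The details you supply --- the identification $V=\alpha_\ell(1_\ell)$, the parity computation showing a three-orbit $\sigma$ lies in $A_n$, and the final dimension count via Lemma \ref{ptd} --- are all correct.

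There is one genuine gap. You invoke Lemma \ref{3.5} ``whose hypotheses hold under $(\ast)$,'' but the proposition's hypothesis allows $\Gal(f)\cong A_n$ for every odd $n\ge 3$, and the case $n=3$, $\Gal(f)\cong A_3$ is excluded from $(\ast)$ (Definition \ref{star}). Condition (iii) of Lemma \ref{3.5} genuinely fails there: $J[2]\cong\F_4$ as an $A_3$-module, so $\dim_{\F_2}\Hom_{A_3}(J[2],J[2])=2$, as the Remark following Lemma \ref{3.5} points out. Hence for an elliptic curve with $\Gal(f)\cong\Z/3\Z$ you cannot prescribe the values $c(\rho)$ independently of the Frobenius class, and the existence of a prime $\ell$ with $\loc_\ell$ surjective requires a separate argument; the paper disposes of this case by citing \cite[Proposition 5.2]{hilbert}. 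You should either restrict your argument to the cases where $(\ast)$ holds and quote that result for $A_3$, or supply a direct argument. (A minor remark: the Lagrangian count of Lemma \ref{numberoflaglangian} that you mention at the end is not needed for the decrease direction --- once $V=\alpha_\ell(1_\ell)$, any ramified character works by Lemma \ref{ramd}; the count of complementary Lagrangians is only needed for the increase by $2$.)
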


\begin{proof}
Decreasing $2$-Selmer rank by $2$ by twisting when $n =3$ and $\Gal(f) \cong A_3$ is done in \cite[Proposition 5.2]{hilbert}. Thus, assume that $C_f$ satisfies $(\ast)$ (Definition \ref{star}). Choose $\ell \in \cP_2$ so that $d_2(\mathrm{Im}(\loc_\ell)) = 2,$ which is poosible by Lemma \ref{3.5} and the Chebotarev Density Theorem. Then $d_2(\Sel_{2,\ell}(J/K)) = d_2(\Sel_2(J/K))-2$. By Lemma \ref{ptd},
$$
d_2(\Sel_2^\ell(J/K)) = d_2(\Sel_{2,\ell}(J/K)) + 2,
$$
whence $\Sel_2^\ell(J/K) = \Sel_2(J/K)$. Taking any ramified character $\psi_\ell \in \Xset_\ram(K_\ell)$, we see that $d_2(\Sel_2(J, \psi_\ell)) = d_2(\Sel_2(J/K)) - 2$. The rest follows from Lemma \ref{rs}.
\end{proof}

\begin{prop}
\label{localtwistincreasebytwo}
Suppose that $\Gal(f) \cong A_n$ or $S_n$. Then there exist infinitely many $\chi \in \Xset(K)$ such that
$$
d_2(\Sel_2(J^\chi/K)) = d_2(\Sel_2(J/K)) + 2.
$$
\end{prop}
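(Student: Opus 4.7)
The plan is to mirror Proposition~\ref{localtwistdecreasebytwo}, but using the \emph{relaxed} Selmer group $\Sel_2^\ell(J/K)$ in place of the strict one. Using Lemma~\ref{3.5} together with the Chebotarev density theorem, I would first produce infinitely many primes $\ell \in \cP_2$ for which the localization map
$$
\loc_\ell\colon \Sel_2(J/K) \longrightarrow \alpha_\ell(1_\ell) \cong J[2]/(\Frob_\ell-1)J[2]
$$
is identically zero. For any such $\ell$, one has $\Sel_{2,\ell}(J/K) = \Sel_2(J/K)$, so by Lemma~\ref{ptd},
$$
d_2(\Sel_2^\ell(J/K)) = d_2(\Sel_2(J/K)) + 2.
$$
It then suffices to find a ramified character $\psi_\ell \in \Xset_{\ram}(K_\ell)$ satisfying $\Sel_2(J,\psi_\ell) = \Sel_2^\ell(J/K)$, because Lemma~\ref{rs} will then yield a global $\chi \in \Xset(K)$ with $\chi_\ell = \psi_\ell$ and $\Sel_2(J^\chi/K) = \Sel_2(J,\psi_\ell)$, giving the required increase of~$2$.

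By the definitions of the Selmer groups, $\Sel_2(J,\psi_\ell) = \Sel_2^\ell(J/K)$ is equivalent to
$$
\mathrm{res}_\ell(\Sel_2^\ell(J/K)) = \alpha_\ell(\psi_\ell)
$$
inside $H^1(K_\ell, J[2])$, both sides being $2$-dimensional for $\ell \in \cP_2$. Writing $L := \mathrm{res}_\ell(\Sel_2^\ell(J/K))$, Lemma~\ref{resrelaxedselmer} says $L$ is a Lagrangian in the $4$-dimensional metabolic space $(H^1(K_\ell, J[2]), q_{J,\ell})$, and the vanishing $\loc_\ell(\Sel_2(J/K)) = 0$ forces $L \cap \alpha_\ell(1_\ell) = 0$. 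Lemma~\ref{numberoflaglangian} with $n=2$ shows that there are exactly $2$ Lagrangians transverse to $\alpha_\ell(1_\ell)$, while Lemma~\ref{ramd} shows that both ramified Kummer images $\alpha_\ell(\eta_\ell)$ and $\alpha_\ell(\eta_\ell\epsilon_\ell)$ are transverse to $\alpha_\ell(1_\ell)$. Provided these two ramified Kummer images are \emph{distinct}, they account for all such transverse Lagrangians, so $L$ must coincide with one of them, and the required $\psi_\ell$ exists.

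The main obstacle is therefore to prove the distinctness $\alpha_\ell(\eta_\ell) \neq \alpha_\ell(\eta_\ell\epsilon_\ell)$. Applying Proposition~\ref{localintersection} to $J^{\eta_\ell}$ with the unramified character $\epsilon_\ell$ (and translating via Proposition~\ref{canonicaliso}), one identifies
$$
\alpha_\ell(\eta_\ell) \cap \alpha_\ell(\eta_\ell \epsilon_\ell) = \bigl(\N J^{\eta_\ell}(F_\ell) + 2J^{\eta_\ell}(K_\ell)\bigr)\big/2J^{\eta_\ell}(K_\ell),
$$
where $F_\ell/K_\ell$ is the unramified quadratic extension. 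Because $\eta_\ell$ is ramified and $\ell \nmid 2$, inertia acts on the Tate module of $J^{\eta_\ell}$ by the nontrivial character $\eta_\ell$, so $J^{\eta_\ell}$ has bad reduction at $\ell$. A direct analysis of this norm map (via the filtration of $J^{\eta_\ell}(K_\ell)$ coming from the N\'eron model, or equivalently via Tate cohomology for the unramified extension $F_\ell/K_\ell$) then shows that this intersection is a proper subspace of $\alpha_\ell(\eta_\ell)$, which by dimension count forces the two ramified Kummer images to be distinct. With the distinctness in hand, varying $\ell$ over the infinitely many primes produced by Chebotarev yields infinitely many global $\chi$ satisfying $d_2(\Sel_2(J^\chi/K)) = d_2(\Sel_2(J/K)) + 2$.
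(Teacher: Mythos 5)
Your proposal follows the paper's architecture step for step: choose $\ell\in\cP_2$ killing the localization of $\Sel_2(J/K)$, use Lemma~\ref{ptd} to get $d_2(\Sel_2^\ell(J/K))=d_2(\Sel_2(J/K))+2$, note that $\mathrm{res}_\ell(\Sel_2^\ell(J/K))$ is a Lagrangian transverse to $\alpha_\ell(1_\ell)$, count transverse Lagrangians via Lemma~\ref{numberoflaglangian}, match one of them with a ramified Kummer image, and globalize with Lemma~\ref{rs}. The one place you depart from the paper is the distinctness of $\alpha_\ell(\eta_\ell)$ and $\alpha_\ell(\eta_\ell\epsilon_\ell)$, and there your argument has a genuine gap.

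You assert that for \emph{every} $\ell\in\cP_2$ a direct analysis of the norm from the unramified quadratic extension $F_\ell$ shows the intersection is proper. That is not true, and the failure is exactly why the paper chooses $\ell$ so that $\Frob_\ell$ is a product of three disjoint cycles of \emph{odd} lengths before invoking Lemma~\ref{rlcd}. By your own reduction via Proposition~\ref{localintersection}, the dimension count in its proof gives
$$
d_2\bigl(\alpha_\ell(\eta_\ell)\cap\alpha_\ell(\eta_\ell\epsilon_\ell)\bigr)
= 2 - d_2\bigl(J^{\eta_\ell}(K_\ell)/\N J^{\eta_\ell}(F_\ell)\bigr),
$$
and for the unramified extension $F_\ell/K_\ell$ the cokernel of the norm is controlled by $\hat{H}^0$ of the component group of the N\'eron model of $J^{\eta_\ell}$, which is $J[2]$ with its Frobenius action. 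This group can vanish when $\Frob_\ell$ has even-length orbits: take $n=7$ and $\Frob_\ell$ of cycle type $(2,2,3)$, say $(12)(34)(567)$ acting on $J[2]=\Ftwo[\alpha_1,\dots,\alpha_7]/\langle e_1+\cdots+e_7\rangle$. Then $\ell\in\cP_2$, but $J[2]^{\Frob_\ell=1}$ is spanned by $e_1+e_2$ and $e_3+e_4$, both of which lie in $(1+\Frob_\ell)J[2]^{\Frob_\ell^2=1}$, so the norm is surjective and the two ramified Kummer images \emph{coincide}. In that situation only one of the two Lagrangians transverse to $\alpha_\ell(1_\ell)$ is a ramified Kummer image, and $\mathrm{res}_\ell(\Sel_2^\ell(J/K))$ may be the other one, so no character at $\ell$ does the job. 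The repair is the paper's extra Chebotarev condition: with all three orbit lengths odd, $[K_\ell(J[2]):K_\ell]$ is odd, hence $F_\ell$ and $K_\ell(J[2])$ are linearly disjoint, $J^{\eta_\ell}(F_\ell)/2J^{\eta_\ell}(F_\ell)\cong J^{\eta_\ell}(K_\ell)/2J^{\eta_\ell}(K_\ell)$, and Lemma~\ref{ramd} forces the trivial intersection. A secondary point: Lemma~\ref{3.5} does not apply when $n=3$ and $\Gal(f)\cong A_3$ (condition (iii) fails), so that case needs the separate elementary argument the paper gives; fortunately there $\ell\in\cP_2$ forces $\Frob_\ell$ to act trivially on $J[2]$, so the odd-orbit condition is automatic.
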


\begin{proof}
First assume that $C_f$ satisfies $(\ast)$. Choose $\ell \in \cP_2$ so that $\mathrm{Im}(\loc_\ell) = 0$ and $\Frob_\ell|_{J(K[2])} \in \Gal(f) \subseteq S_n$ is a product of $3$ disjoint cycles of odd lengths. Choosing such an $\ell$ is possible by Lemma \ref{3.5} and the Chebotarev Density Theorem. If $n=3$ and $\Gal(f) \cong A_3$, one can find a sufficiently big field $N$ containing $K(J[2])$ that is Galois over $K$, and $c(\sigma) = 0$ for $\sigma \in G_N$ and $c \in \Sel_2(J/K)$. Then there are infinitely many primes $\ell (\in \cP_2)$ such that $\Frob_\ell|_{\Gal(N/K)} = 1$ by the Chebotarev density theorem.

In either case, we have $\Sel_2(J/K) = \Sel_{2,\ell}(J/K)$. By Lemma \ref{ptd} and Lemma \ref{resrelaxedselmer}.
$$
d_2(\Sel_2^\ell(J/K)) = d_2(\Sel_{2,\ell}(J/K)) + 2
$$
and $\mathrm{res}_\ell(\Sel_2^\ell(J/K))$ is a Lagrangian subspace (Lemma \ref{resrelaxedselmer}) of the metabolic space $(H^1(K_\ell, J[2]),q_{J, \ell})$ that intersects $\alpha_{\ell}(1_{\ell})$ trivially. Let $\Xset_\ram(K_{\ell}) = \{\psi_1, \psi_2 \}$. Then $\alpha_{\ell}(1_{\ell}) \cap \alpha_{\ell}(\psi_1) = \alpha_{\ell}(1_{\ell}) \cap \alpha_{\ell}(\psi_2) = \{ 0 \}$ by Lemma \ref{ramd}, and $\alpha_{\ell}(\psi_1) \cap \alpha_{\ell}(\psi_2) = \{ 0 \}$ by Lemma \ref{rlcd}. By Lemma \ref{numberoflaglangian}, there are exactly $2$ Lagrangian subspaces that intersect $\alpha_{\ell}(1_{\ell})$ trivially, so there exists a $\psi_\ell \in \Xset_\ram(K_\ell)$ such that $\alpha_\ell(\psi_\ell) = \mathrm{res}_\ell(\Sel_2^\ell(J/K))$. Hence it follows that $d_2(\Sel_2(J, \psi_\ell)) = d_2(\Sel_2(J/K)) + 2$. Now Lemma \ref{rs} proves the proposition.
\end{proof}

Finally, Proposition \ref{localtwistdecreasebytwo} and Proposition \ref{localtwistincreasebytwo} show the following by induction.

\begin{thm}
\label{selmerrankpm2}
Suppose that $C_f$ is a hyperelliptic curve over a number field $K$ such that $\Gal(f) \cong A_n$ or $S_n$. Let $J$ denote the Jacobian of $C_f$. Then for all $r \equiv d_2(\Sel_2(J/K)) \text{ } (\text{mod }2)$, there exist infinitely many quadratic characters $\chi \in \Xset(K)$ such that $d_2(\Sel_2(J^\chi/K)) = r$.
\end{thm}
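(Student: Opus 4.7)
The plan is to deduce the theorem by induction on $|r - d_2(\Sel_2(J/K))|/2$ using Proposition \ref{localtwistdecreasebytwo} and Proposition \ref{localtwistincreasebytwo} as the inductive steps. Set $s := d_2(\Sel_2(J/K))$. The crucial structural observation is that quadratic twisting is closed under composition: for any $\chi, \chi' \in \Xset(K)$, the twist of a twist satisfies $(J^\chi)^{\chi'} = J^{\chi\chi'}$, and moreover the underlying defining polynomial of $J^\chi$ differs from that of $J$ only by the scalar $d^{-1}$ (with $d$ representing $\chi$), so $\Gal(d^{-1}f) = \Gal(f) \cong A_n$ or $S_n$. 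Hence the two propositions apply equally well to every intermediate Jacobian encountered along an inductive chain.

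For the inductive argument itself, consider three cases. If $r > s$, apply Proposition \ref{localtwistincreasebytwo} to $J$ to obtain infinitely many $\chi_1$ with $d_2(\Sel_2(J^{\chi_1}/K)) = s + 2$; fix one such $\chi_1$ and apply Proposition \ref{localtwistincreasebytwo} again, now to $J^{\chi_1}$, to obtain infinitely many $\chi_2$ with $d_2(\Sel_2(J^{\chi_1\chi_2}/K)) = s + 4$. After $(r-s)/2$ iterations we reach Selmer rank $r$. If $r < s$, do the same with Proposition \ref{localtwistdecreasebytwo} in place of Proposition \ref{localtwistincreasebytwo}; the hypothesis $d_2(\Sel_2(J^{\chi_1\cdots \chi_i}/K)) \ge 2$ required at the $(i+1)$-st application is automatic since by construction this Selmer rank equals $s - 2i \ge r + 2 \ge 2$. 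If $r = s$, apply Proposition \ref{localtwistincreasebytwo} to $J$ to obtain some $\chi_1$ with $d_2(\Sel_2(J^{\chi_1}/K)) = s+2$, then apply Proposition \ref{localtwistdecreasebytwo} to $J^{\chi_1}$ (whose Selmer rank is now $\ge 2$) to obtain infinitely many $\chi_2$ with $d_2(\Sel_2(J^{\chi_1\chi_2}/K)) = s$.

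The only subtle point is that "infinitely many" is preserved through the iteration. At the final step of each case we apply one of the two propositions to a fixed intermediate Jacobian and obtain infinitely many suitable $\chi$; since the preceding $\chi_1, \ldots, \chi_{k-1}$ are fixed, the products $\chi_1 \chi_2 \cdots \chi_k$ then range over an infinite set of elements of $\Xset(K)$, giving infinitely many quadratic twists $J^{\chi_1 \cdots \chi_k}$ of $J$ with $d_2(\Sel_2(J^{\chi_1 \cdots \chi_k}/K)) = r$. Because the hypotheses and conclusions of both propositions depend only on the isomorphism class of the defining hyperelliptic curve (in particular on the Galois group of the defining polynomial, which is preserved by quadratic twist), no additional hypothesis needs to be verified at any step. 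Thus the main content really is packaged into Proposition \ref{localtwistdecreasebytwo} and Proposition \ref{localtwistincreasebytwo}, and the theorem follows.
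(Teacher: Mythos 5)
Your proposal is correct and is exactly the argument the paper intends: the paper's own proof is the one-line statement that Propositions \ref{localtwistdecreasebytwo} and \ref{localtwistincreasebytwo} imply the theorem by induction, and you have simply filled in the details (closure of twisting under composition, preservation of $\Gal(f)$, the up-then-down trick for $r=s$, and the persistence of ``infinitely many'' through the chain). No gaps.
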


\section{Parity of $2$-Selmer ranks of Jacobians of hyperelliptic curves}
We continue to assume that $J$ is the Jacobian of $C_f$, where $n= \deg(f)$ is odd.

\begin{defn}
\label{7.1}
For every $v \in \Sigma$ and $\chi_v \in \Xset(K_v)$, we define $\omega_v : \Xset(K_v) \to \{ \pm1 \}$ by
$$
\omega_v(\chi_v):= (-1)^{h_v(\chi_v)}\chi_v(\Delta_f).
$$
Define
$$
\delta_v:= \frac{1}{|\Xset(K_v)|}\sum_{\chi \in \Xset(K_v)}\omega_v(\chi) \text {  } \text{     and     } \text {  }\delta:= (-1)^{d_2(\Sel_2(J/K))}\prod_{v \in \Sigma}\delta_v.
$$
\end{defn}

\begin{defn}
\label{7.2}
Define a function $\Xset(K) \to \Z_{>0}$ by
$$
\Vert \chi \Vert := \text{max}\{\N(\l): \chi \text{ is ramified at }\l\},
$$
where $\N(\l)$ is the index of the residue field of $K_\l$. If $X>0$, let $\Xset(K,X) \subset \Xset(K)$ be the subgroup
$$
\Xset(K,X) := \{\chi \in \Xset(K):\Vert \chi \Vert < X\}.
$$
\end{defn}

The following proposition is in fact the same as \cite[Proposition 7.2]{KMR} in a slightly more general setting (hyperelliptic curves).
For $\chi \in \Xset(K)$, let $$r(\chi) := d_2(\Sel_2(J^\chi/K))$$ and $\chi_v$ be the restriction of $\chi$ to $G_{K_v}$.

\begin{prop}
\label{congruence}
Suppose that $\chi \in \Xset(K).$ Then
$$
r(\chi) \equiv r(1_K) \text{ } (\text{mod }2) \Longleftrightarrow \prod_{v \in \Sigma} \omega_v(\chi_v) = 1.
$$
\end{prop}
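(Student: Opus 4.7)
The plan is to combine Theorem \ref{parity} with a careful bookkeeping of the local invariants $h_v(\chi_v)$ outside $\Sigma$, together with the global reciprocity relation $\prod_v \chi_v(\Delta_f) = 1$.

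First, I would apply Theorem \ref{parity} directly. It gives
$$
r(\chi) - r(1_K) \equiv \sum_{v} h_v(\chi_v) \pmod 2,
$$
where the sum is a priori over all places. So the congruence $r(\chi) \equiv r(1_K) \pmod 2$ is equivalent to $\prod_v (-1)^{h_v(\chi_v)} = 1$. The task is thus to convert $\prod_{\text{all } v}(-1)^{h_v(\chi_v)}$ into $\prod_{v \in \Sigma}(-1)^{h_v(\chi_v)}\chi_v(\Delta_f)$; that is, to prove
$$
\prod_{v \notin \Sigma}(-1)^{h_v(\chi_v)} = \prod_{v \in \Sigma}\chi_v(\Delta_f).
$$

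The key step is evaluating $h_v(\chi_v)$ for $v \notin \Sigma$. At such a place, $v \nmid 2\infty$ and $J$ has good reduction. I would split by cases. If $\chi_v$ is unramified, then $h_v(\chi_v) = 0$ by Lemma \ref{localzero}, and $\chi_v(\Delta_f) = 1$ since $\Delta_f$ is a unit at $v$ under our choice of $\Sigma$. If $\chi_v$ is ramified, then Lemma \ref{ramd} gives $h_v(\chi_v) = d_2(J(K_v)[2])$, and combining Lemma \ref{cycle} with Lemma \ref{dcyc} identifies this with the integer $i$ such that $v \in \cP_i$. Meanwhile, Lemma \ref{deltaf} shows exactly that $\chi_v(\Delta_f) = 1$ when $i$ is even and $\chi_v(\Delta_f) = -1$ when $i$ is odd and $\chi_v$ is ramified. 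In every case therefore
$$
(-1)^{h_v(\chi_v)} = \chi_v(\Delta_f) \qquad \text{for } v \notin \Sigma.
$$

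Finally I would invoke global class field theory: viewed as a character of the idele class group via the reciprocity map, the global character $\chi$ satisfies $\chi(\Delta_f) = 1$ on the principal idele $\Delta_f \in K^\times$, so $\prod_v \chi_v(\Delta_f) = 1$. Splitting this product as $\prod_{v \in \Sigma}\chi_v(\Delta_f) \cdot \prod_{v \notin \Sigma}\chi_v(\Delta_f) = 1$ and substituting the identity above yields
$$
\prod_{v \notin \Sigma}(-1)^{h_v(\chi_v)} = \prod_{v \notin \Sigma}\chi_v(\Delta_f) = \prod_{v \in \Sigma}\chi_v(\Delta_f),
$$
and combining with Theorem \ref{parity} gives the stated equivalence. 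I do not expect any real obstacle here; the only subtlety is verifying the case-by-case identification $(-1)^{h_v(\chi_v)} = \chi_v(\Delta_f)$ for $v \notin \Sigma$, but the lemmas have been set up precisely to make these cases line up, and the parity of the number of Frobenius orbits controls both sides simultaneously because $n = \deg f$ is odd.
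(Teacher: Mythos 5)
Your proposal is correct and follows essentially the same route as the paper: apply Theorem \ref{parity}, use Lemmas \ref{localzero}, \ref{ramd}, \ref{deltaf} and Remark \ref{localdim} to show $(-1)^{h_v(\chi_v)} = \chi_v(\Delta_f)$ for every $v \notin \Sigma$, and close with the product formula $\prod_v \chi_v(\Delta_f) = 1$. No gaps.
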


\begin{proof}
Let $\theta$ be a (formal) product of primes not in $\Sigma$ such that $\chi$ is ramified exactly at the primes which divide $\theta$. Then by Lemma \ref{deltaf},
$$
\prod_{\l \not\in \Sigma} \chi_\l(\Delta_f) = (-1)^{|\{\l:\l\in\cP_i \text{ for odd $i$ and $\l|\theta$}\}|}.
$$
 Note that for $\l|\theta$, we have $(-1)^{h_\l(\chi_\l)} = \chi_\l(\Delta_f)$ by Lemma \ref{ramd}, Lemma \ref{deltaf}, and Remark \ref{localdim}. Therefore Theorem \ref{parity} and Lemma \ref{localzero} show that
\begin{align*}
r(\chi) \equiv r(1_K) \text{ (mod }2) & \Longleftrightarrow (-1)^{\Sigma_{v}h_v(\chi_v)} = 1 \\
& \Longleftrightarrow \prod_{v \in \Sigma} \omega_v(\chi_v)\chi_v(\Delta_f) \prod_{v \not\in \Sigma} \chi_v(\Delta_f) = 1.
\end{align*}
Clearly $\prod_v \chi_v(\Delta_f) = 1$, so this completes the proof.
\end{proof}

The following theorem is remarkable theorem due to Klagsbrun, Mazur, and Rubin (\cite[Theorem 7.6]{KMR}) (for the elliptic curve case).
\begin{thm}
\label{paritydensity}
For all sufficiently large $X$,
$$
\frac{|\{\chi \in \Xset(K, X):d_2(\Sel_2(J^\chi/K)) \text{ is even }    \}|}{|\Xset(K, X)|} = \frac{1 + \delta}{2}.
$$
\end{thm}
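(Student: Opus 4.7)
The plan is to reduce the statement to an equidistribution problem for the local components $(\chi_v)_{v \in \Sigma}$ as $\chi$ varies over $\Xset(K,X)$, and then evaluate the resulting character sum.

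First, I would use Proposition \ref{congruence} to rewrite the parity condition purely in terms of the restrictions $\chi_v$ for $v \in \Sigma$. Write $r_0 := r(1_K) = d_2(\Sel_2(J/K))$. Then $d_2(\Sel_2(J^\chi/K))$ is even if and only if $(-1)^{r_0} \prod_{v \in \Sigma}\omega_v(\chi_v) = 1$. Applying the orthogonality identity $\mathbf{1}[x=1] = (1+x)/2$ for $x \in \{\pm 1\}$ gives
$$
\frac{|\{\chi \in \Xset(K, X):d_2(\Sel_2(J^\chi/K)) \text{ is even}\}|}{|\Xset(K, X)|}
= \frac{1}{2} + \frac{(-1)^{r_0}}{2\,|\Xset(K,X)|}\sum_{\chi \in \Xset(K,X)}\prod_{v \in \Sigma}\omega_v(\chi_v).
$$
So it suffices to prove that for all sufficiently large $X$,
$$
\frac{1}{|\Xset(K,X)|}\sum_{\chi \in \Xset(K,X)}\prod_{v \in \Sigma}\omega_v(\chi_v) \;=\; \prod_{v \in \Sigma}\delta_v.
$$

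Next, I would analyze the restriction map $\Phi \colon \Xset(K,X) \too \prod_{v \in \Sigma}\Xset(K_v)$ sending $\chi$ to $(\chi_v)_{v \in \Sigma}$. The target is a finite group, so the problem becomes: show that the fibers of $\Phi$ have (eventually) equal cardinality, and determine the image. Using the assumption $\Pic(\O_{K,\Sigma}) = 1$ together with global class field theory, the identification $\Xset(K) = \Hom(\bA_K^\times/K^\times,\bmu_2)$ and a standard decomposition of the idele class group let one describe characters ramified only outside $\Sigma$ up to prescribed local behavior at $\Sigma$ in terms of a fixed subgroup $W_0 \subseteq \prod_{v \in \Sigma}\Xset(K_v)$ and an auxiliary contribution from squarefree products of primes outside $\Sigma$ with norm $< X$. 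For $X$ large enough, this auxiliary set is populated uniformly over $W_0$-cosets, so $\Phi$ has image equal to $W_0$ and all fibers have equal size. Granting this, the sum becomes
$$
\frac{1}{|\Xset(K,X)|}\sum_{\chi \in \Xset(K,X)}\prod_{v \in \Sigma}\omega_v(\chi_v)
= \frac{1}{|W_0|}\sum_{\underline{\eta} \in W_0}\prod_{v \in \Sigma}\omega_v(\eta_v).
$$

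Finally, I would show the right-hand side factors as $\prod_{v \in \Sigma}\delta_v$. The containment $W_0 \subseteq \prod_{v\in\Sigma}\Xset(K_v)$ is cut out by the obstruction that a tuple $\underline{\eta}$ extend to a global character unramified outside $\Sigma$: by reciprocity this amounts to a single relation, involving $\prod_{v \in \Sigma}\eta_v(\Delta_f)$ and Lemma \ref{deltaf}. One checks that this relation is already forced by the definition of $\omega_v$ (the $\chi_v(\Delta_f)$ factor in $\omega_v$ is precisely what makes $\prod_v\omega_v(\chi_v)$ well-defined on the image $W_0$), so by a standard character-sum argument on the quotient $\prod_v \Xset(K_v)/W_0$ the sum over $W_0$ factors into a product of local sums, producing $\prod_v \delta_v$. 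Multiplying by $(-1)^{r_0}$ yields $\delta$, completing the proof.

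The main obstacle is the equidistribution step in the middle paragraph: precisely, showing that for $X$ large the restriction map to $\prod_{v \in \Sigma}\Xset(K_v)$ hits exactly $W_0$ with all fibers of equal size. This is the analogue of \cite[Theorem 7.6]{KMR} for elliptic curves, and is essentially a class-field-theoretic counting statement; the hyperelliptic setting introduces no genuinely new difficulty because the entire parity-versus-local-data correspondence (Theorem \ref{parity} and Proposition \ref{congruence}) has already been established in the same form as in the elliptic case, so the KMR counting argument transfers.
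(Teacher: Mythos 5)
Your proposal follows essentially the same route as the paper, which proves this theorem simply by invoking the argument of \cite[Theorem 7.6]{KMR}: reduce via Proposition \ref{congruence} to averaging $\prod_{v\in\Sigma}\omega_v(\chi_v)$ over $\Xset(K,X)$, then use the fact that $\Xset(K,X)$ is a subgroup whose restriction to $\prod_{v\in\Sigma}\Xset(K_v)$ has equal-sized fibers for large $X$, and factor the resulting finite sum into the local averages $\delta_v$.

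One step in your third paragraph is shakier than it needs to be. You allow the image $W_0$ of the restriction map to be a proper subgroup cut out by a relation involving $\Delta_f$, and then claim the average of $\prod_v\omega_v$ over $W_0$ still factors as $\prod_v\delta_v$ because the relation is ``compatible'' with $\omega_v$. That inference is not valid in general: averaging a product of local functions over a proper subgroup of a product does not factor into the product of the full local averages (a two-place example with $W_0$ the diagonal already breaks it). What actually saves the argument, and what KMR prove, is that for $X$ sufficiently large the restriction map $\Xset(K,X)\to\prod_{v\in\Sigma}\Xset(K_v)$ is \emph{surjective} with fibers of equal cardinality (there is no Grunwald--Wang obstruction for quadratic characters at a finite set of places), so the sum factors immediately and no coset bookkeeping with $\Delta_f$ is needed. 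With that correction your outline is the intended proof.
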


\begin{proof}
See the proof of Theorem 7.6 in \cite{KMR}. Without difficulty, one can see \cite[Theorem 7.6]{KMR} can be extended to the hyperelliptic curve case.
\end{proof}

\begin{prop}
\label{delta0}
Suppose that $K$ has a real embedding $K \hookrightarrow K_{v_0}$. Then
\[\delta_{v_0} =
\begin{cases}
1 \text{ if } n \equiv 1 \text{ } (\text{mod } 4)            \\
0 \text{ if } n \equiv 3 \text{ } (\text{mod } 4).
\end{cases}
\]
\end{prop}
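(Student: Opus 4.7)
The plan is to exploit that $\Xset(\R) = \Hom(G_\R,\{\pm1\})$ has only two elements, the trivial character $1_{v_0}$ and the sign character $\eta$ (corresponding to $-1\in\R^\times/(\R^\times)^2$ under the local reciprocity identification used to define $\omega_v$). Hence
$$\delta_{v_0} = \tfrac{1}{2}\bigl(\omega_{v_0}(1_{v_0}) + \omega_{v_0}(\eta)\bigr),$$
so the whole proof reduces to evaluating the two summands. The trivial contribution is immediate: $h_{v_0}(1_{v_0})=0$ by Lemma \ref{localzero} and $1_{v_0}(\Delta_f)=1$, so $\omega_{v_0}(1_{v_0})=1$.

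For the second term I would write $n = (2k_1-1) + 2k_2$, where $f$ has $2k_1-1$ real roots and $2k_2$ complex roots over $\R$, and assemble $\omega_{v_0}(\eta)=(-1)^{h_{v_0}(\eta)}\eta(\Delta_f)$ from two pieces. The first piece is given directly by Lemma \ref{reallocal}(ii): $h_{v_0}(\eta)=k_1-1$, so $(-1)^{h_{v_0}(\eta)}=(-1)^{k_1-1}$. The second piece is $\eta(\Delta_f)=\operatorname{sign}(\Delta_f)$, which I would compute by the classical grouping argument: in $\Delta_f=\prod_{i<j}(\alpha_i-\alpha_j)^2$, pairs of real roots contribute positive squares; each real root together with a complex conjugate pair contributes $|\beta-\alpha|^4>0$; two distinct complex conjugate pairs contribute $|\alpha-\gamma|^4|\alpha-\bar\gamma|^4>0$; and each pair $(\alpha,\bar\alpha)$ of conjugates contributes $(\alpha-\bar\alpha)^2=-4(\operatorname{Im}\alpha)^2<0$. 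There are exactly $k_2$ factors of the last type, so $\operatorname{sign}(\Delta_f)=(-1)^{k_2}$.

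Multiplying these and using $k_1+k_2-1=(n-1)/2$ (since $n = 2(k_1+k_2)-1$), I obtain
$$\omega_{v_0}(\eta) = (-1)^{k_1-1}\cdot(-1)^{k_2} = (-1)^{(n-1)/2},$$
which is $+1$ when $n\equiv 1\pmod 4$ and $-1$ when $n\equiv 3\pmod 4$. Substituting into the expression for $\delta_{v_0}$ gives $\tfrac{1}{2}(1+1)=1$ in the first case and $\tfrac{1}{2}(1-1)=0$ in the second, which is exactly the claim.

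I do not anticipate a substantive obstacle: every step is either a direct invocation of Lemma \ref{reallocal}(ii) or the classical sign-of-discriminant computation. The only point that deserves a moment of care is to confirm that, under the identification $\Xset(\R)\cong \R^\times/(\R^\times)^2$ fixed in Definition \ref{7.1}, the nontrivial character $\eta$ really does evaluate as $\eta(\Delta_f)=\operatorname{sign}(\Delta_f)$; this is tautological once the identification is in place.
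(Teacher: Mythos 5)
Your proposal is correct and follows essentially the same route as the paper: both compute $\delta_{v_0}=\tfrac12(1+\omega_{v_0}(\eta))$, obtain $(-1)^{h_{v_0}(\eta)}=(-1)^{k_1-1}$ from Lemma \ref{reallocal} and $\eta(\Delta_f)=(-1)^{k_2}$ from the sign of the discriminant, and conclude via $n=2k_1+2k_2-1$. Your write-up is merely a bit more explicit about the grouping argument for $\operatorname{sign}(\Delta_f)$ and about the trivial character's contribution.
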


\begin{proof}
Let $\eta$ be the sign character in $\Xset(K_{v_0}) = \Hom(K_{v_0}^\times, \pm1)$ sending negative numbers to $-1$.
Suppose that $f$ has $2k_1-1$ real roots and $2k_2$ complex roots so that $2k_1 + 2k_2 - 1 = n$. Let $\beta_1, \overline{\beta_1}, \beta_2, \overline{\beta_2}, \cdots \beta_{k_2}, \overline{\beta_{k_2}}$ denote the complex roots of $f$, where $\overline{\beta_i}$ is the complex conjugate of $\beta_i$.
Then by an appropriate rearrangement of the roots, we get
$$
\Delta_f = \prod_{i<j}(\alpha_i -\alpha_j)^2 = c\overline{c}\prod_{1 \le i \le k_2}(\beta_i -\overline{\beta_i})^2
$$
for some c. Hence $\eta(\Delta_f) = (-1)^{k_2}$. By Lemma \ref{reallocal}, we deduce that
$$
(-1)^{h_{v_0}(\eta)} = (-1)^{k_1-1}.
$$
Therefore $\delta_{v_0} = 1/2(1 + \omega_{v_0}(\eta)) = 1/2(1 + (-1)^{k_1+k_2-1})$, so the proposition follows from the equality $2k_1 + 2k_2 - 1 = n$.
\end{proof}
As an easy application, we have
\begin{cor}
\label{application}
Suppose that $n \equiv 3 \text{ } (\text{mod } 4),$ and $K$ has a real embedding. Then for all sufficiently large $X$, we have
$$
|\{\chi \in \Xset(K, X): r(\chi) \textrm{ is even }    \}| = |\{\chi \in \Xset(K, X): r(\chi) \textrm{ is odd }    \}|=\frac{|\Xset(K,X)|}{2}.
$$
\end{cor}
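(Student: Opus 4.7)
The plan is to derive this corollary as an immediate consequence of the two preceding results: Theorem \ref{paritydensity}, which gives the even-parity density as $(1+\delta)/2$, and Proposition \ref{delta0}, which computes the archimedean contribution $\delta_{v_0}$ when $K_{v_0} = \R$.

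First I would observe that since $K$ has a real embedding $K \hookrightarrow K_{v_0}$, the archimedean place $v_0$ lies in $\Sigma$ (by our standing convention that $\Sigma$ contains all archimedean places). Therefore $\delta_{v_0}$ appears as a factor in the product
\[
\delta \;=\; (-1)^{d_2(\Sel_2(J/K))} \prod_{v \in \Sigma} \delta_v.
\]
Under the hypothesis $n \equiv 3 \pmod 4$, Proposition \ref{delta0} gives $\delta_{v_0} = 0$. Consequently $\delta = 0$.

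Plugging $\delta = 0$ into Theorem \ref{paritydensity} yields, for all sufficiently large $X$,
\[
\frac{|\{\chi \in \Xset(K,X) : r(\chi) \text{ is even}\}|}{|\Xset(K,X)|} \;=\; \frac{1+\delta}{2} \;=\; \frac{1}{2},
\]
so the even-rank twists make up exactly half of $\Xset(K,X)$. The complementary set—twists with odd $2$-Selmer rank—therefore accounts for the other half, which is the asserted equality.

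There is really no obstacle here: the work is entirely loaded into Theorem \ref{paritydensity} (the analytic density statement) and Proposition \ref{delta0} (the local sign computation at a real place). The only point to verify is the bookkeeping that $v_0 \in \Sigma$ so that $\delta_{v_0}$ genuinely enters the product defining $\delta$, after which $\delta = 0$ forces the equidistribution.
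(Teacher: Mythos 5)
Your argument is correct and is exactly the route the paper intends: the corollary follows immediately from Theorem \ref{paritydensity} once Proposition \ref{delta0} gives $\delta_{v_0}=0$ (hence $\delta=0$) under the hypotheses $n\equiv 3 \pmod 4$ and $v_0$ real, with $v_0\in\Sigma$ by the standing convention. Nothing further is needed.
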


\begin{rem}
If $n \equiv 1 \text{ } (\text{mod } 4)$ or if $K$ has no real embedding, we have to know values of $\delta_v$ other than $\delta_{v_0}$ to compute an accurate density. In fact, the ``disparity constant" $\delta$ may not be zero. We display an example in next section.
\end{rem}

Theorem \ref{selmerrankpm2} and Theorem \ref{paritydensity} show the following. \begin{thm}
Suppose that $C_f$ is a hyperelliptic curve defined over a number field $K$ such that $n= \deg(f)$ is odd. Suppose that $\Gal(f) \cong S_n$ or $A_n$, and the disparity constant $\delta$ is neither $-1$ nor $1$. Then for every $r \geqq 0$, the Jacobian $J$ of $C_{f}$ has infinitely many quadratic twists $J^\chi$ such that   $d_2(\Sel_2(J^\chi/K))$ = $r$.
\end{thm}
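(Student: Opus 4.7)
The plan is to combine the two main results of the preceding two sections: Theorem \ref{selmerrankpm2}, which says that one can hit any 2-Selmer rank of a fixed parity by twisting, together with Theorem \ref{paritydensity}, which controls the density of twists of each parity in terms of $\delta$. The key observation is that the hypothesis $\delta \notin \{-1, 1\}$ is exactly what is needed to guarantee that both parities actually occur among the quadratic twists.

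More precisely, I would proceed as follows. First, since $\delta \neq \pm 1$, both $(1+\delta)/2$ and $(1-\delta)/2$ are strictly positive. Applying Theorem \ref{paritydensity} to these two densities separately shows that for all sufficiently large $X$,
\[
|\{\chi \in \Xset(K,X) : d_2(\Sel_2(J^\chi/K)) \text{ is even}\}| > 0
\]
and
\[
|\{\chi \in \Xset(K,X) : d_2(\Sel_2(J^\chi/K)) \text{ is odd}\}| > 0.
\]
In particular, there exist characters $\chi_0, \chi_1 \in \Xset(K)$ such that $d_2(\Sel_2(J^{\chi_0}/K))$ is even and $d_2(\Sel_2(J^{\chi_1}/K))$ is odd.

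Next, observe that $J^{\chi_0}$ is itself the Jacobian of the hyperelliptic curve $C_{d_0^{-1}f}$, where $d_0 \in K^\times/(K^\times)^2$ corresponds to $\chi_0$. The polynomial $d_0^{-1}f$ has the same roots as $f$, so $\Gal(d_0^{-1}f) = \Gal(f) \cong S_n$ or $A_n$. Hence the hypotheses of Theorem \ref{selmerrankpm2} apply to $J^{\chi_0}$, and we obtain infinitely many $\psi \in \Xset(K)$ with
\[
d_2(\Sel_2((J^{\chi_0})^\psi/K)) = r
\]
for every even $r \geq 0$. Since $(J^{\chi_0})^\psi = J^{\chi_0 \psi}$, these are quadratic twists of $J$ itself. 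The same argument applied to $J^{\chi_1}$ yields infinitely many quadratic twists of $J$ realizing each odd value of $r$. Combining the even and odd cases covers every $r \geq 0$, which finishes the proof.

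The argument is essentially a short assembly, and there is no real obstacle — all the difficulty has already been absorbed into Theorem \ref{selmerrankpm2} (which required the metabolic-space and Heisenberg-group machinery of Sections 5--6) and Theorem \ref{paritydensity} (which rested on Proposition \ref{congruence} and the KMR density argument). The only mild point to verify is that the hypotheses $\Gal(f) \cong S_n$ or $A_n$ and $n$ odd are preserved under twisting, which is immediate since twisting replaces $f$ by $d^{-1}f$ without changing either the roots or the degree.
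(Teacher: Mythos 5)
Your proposal is correct and follows exactly the route the paper intends: the paper's proof is literally the one-line assembly of Theorem \ref{selmerrankpm2} and Theorem \ref{paritydensity}, and you have filled in the details (both parities occur since $(1\pm\delta)/2>0$, twists of twists are twists, and the hypotheses on $\Gal(f)$ and $\deg(f)$ are preserved under twisting) correctly.
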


\begin{cor}
\label{hec}
Suppose that $C_f$ is a hyperelliptic curve defined over a number field $K$. Let $n = \deg(f)$, and suppose that $n \equiv 3 (\textrm{mod }4)$ and $\Gal(f) \cong S_n$ or $A_n$. Suppose further that $K$ has a real embedding. Then for every $r \geqq 0$, the Jacobian $J$ of $C_{f}$ has infinitely many quadratic twists $J^\chi$ such that   $d_2(\Sel_2(J^\chi/K))$ = $r$.
\end{cor}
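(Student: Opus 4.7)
The plan is to deduce this corollary directly from the theorem immediately preceding it, which asserts the same conclusion under the assumption that the disparity constant $\delta$ is neither $+1$ nor $-1$. Thus it suffices to verify that $\delta \neq \pm 1$ whenever $n \equiv 3 \pmod 4$ and $K$ has a real embedding.

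Unwinding the definitions, by Definition \ref{7.1} one has
$$
\delta \;=\; (-1)^{d_2(\Sel_2(J/K))} \prod_{v \in \Sigma} \delta_v,
$$
a finite product indexed by places in $\Sigma$. Since $\Sigma$ was chosen at the beginning of Section 6 to contain every archimedean place, the given real embedding $K \hookrightarrow K_{v_0}$ singles out a place $v_0 \in \Sigma$ with $K_{v_0} = \R$. Proposition \ref{delta0} then computes $\delta_{v_0} = 0$ precisely because $n \equiv 3 \pmod 4$. Consequently one factor of the product vanishes, forcing $\delta = 0$. In particular $\delta \notin \{-1, +1\}$, so the hypothesis of the preceding theorem is met and we conclude that for every $r \ge 0$ there are infinitely many $\chi$ with $d_2(\Sel_2(J^\chi/K)) = r$.

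There is essentially no obstacle: all the substantive work has already been done. The theorem just above combines Theorem \ref{selmerrankpm2} (which uses metabolic space / Heisenberg group techniques to move the $2$-Selmer rank by $\pm 2$, preserving parity) with Theorem \ref{paritydensity} (the parity density computation) to produce both parities whenever $\delta \neq \pm 1$; and Proposition \ref{delta0}, whose proof rests on the explicit real structure of $J(\R)$ from Lemma \ref{reallocal} together with a sign computation on $\Delta_f$, delivers the vanishing $\delta_{v_0} = 0$ needed in the $n \equiv 3 \pmod 4$ case. The only thing to verify, and it is automatic from the setup, is that the real place indeed lies in $\Sigma$.
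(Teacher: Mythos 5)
Your proposal is correct and is exactly the route the paper intends: the corollary follows from the theorem immediately preceding it once one notes that $v_0\in\Sigma$ and Proposition \ref{delta0} gives $\delta_{v_0}=0$ for $n\equiv 3\pmod 4$, hence $\delta=0\neq\pm1$. Nothing further is needed.
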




\section{Examples}
In this section, we give an explicit example of a hyperelliptic curve $C_h/\Q$ such that $d_2(\Sel_2(J^\chi/\Q))$ has constant parity for all quadratic twists $J^\chi$ ($J$:= the Jacobian of $C_h$), so it defies \eqref{50percent} in the introduction. More precisely, the main result of this section is the following.

\begin{prop}
\label{heng}
Suppose that $C_h$ is a hyperelliptic curve over $\Q$ whose affine model is
$$
y^2 = h(x) := -273(6x+1)(91x^2+54x+9)(100x^2+60x+1)
$$
Let $J$ denote the Jacobian of $C_h$. Then $d_2(\Sel_2(J^\chi/\Q))$ is even for all quadratic twists $J^\chi$ of $J$.
\end{prop}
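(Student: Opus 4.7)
The plan is to invoke Proposition \ref{congruence}: for every $\chi \in \Xset(\Q)$, $d_2(\Sel_2(J^\chi/\Q)) \equiv d_2(\Sel_2(J/\Q)) \pmod{2}$ if and only if $\prod_{v \in \Sigma}\omega_v(\chi_v) = 1$. So the proof reduces to two assertions: (a) $d_2(\Sel_2(J/\Q))$ is even, and (b) $\prod_{v \in \Sigma}\omega_v(\chi_v) = 1$ for every $\chi \in \Xset(\Q)$. If both hold, every twist inherits the even parity of the untwisted $2$-Selmer rank.

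For the setup, observe that the quadratic factors $91x^2+54x+9$ and $100x^2+60x+1$ have discriminants $-360$ and $3200$, with splitting fields $\Q(\sqrt{-10})$ and $\Q(\sqrt{2})$; hence the splitting field of $h$ is $\Q(\sqrt{2},\sqrt{-5})$ and $\Gal(h/\Q) \cong (\Z/2\Z)^2$. This group is neither $S_5$ nor $A_5$, so Corollary \ref{hec} does not apply, which is precisely what allows the parity to be constant. From $-273 = -3 \cdot 7 \cdot 13$ together with the discriminant factorizations, the bad primes of $J$ are contained in $\{2,3,5,7,13\}$; take $\Sigma = \{\infty,2,3,5,7,13\}$, for which $\Pic(\O_{\Q,\Sigma}) = 0$ is automatic. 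Since $n=5 \equiv 1 \pmod{4}$, Proposition \ref{delta0} gives $\omega_\infty(\chi_\infty) = 1$ for both characters, so the real place is harmless.

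For (b) at finite primes, write $\omega_v(\chi_v) = (-1)^{h_v(\chi_v)}\chi_v(\Delta_h)$. Hilbert reciprocity $\prod_v \chi_v(\Delta_h) = 1$ converts $\prod_{v \in \Sigma}\chi_v(\Delta_h)$ into $\prod_{v \notin \Sigma}\chi_v(\Delta_h)^{-1}$; by Lemma \ref{deltaf} the latter is determined by the ramification of $\chi$ at primes $v \in \cP_i$ with odd $i$, and by Lemma \ref{ramd} combined with Remark \ref{localdim} the contribution $h_v(\chi_v)$ at those $v$ equals $\dim \alpha_v(1_v)$ precisely when $\chi_v$ is ramified. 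Thus (b) reduces to a finite check: for each $v \in \Sigma$, tabulate $h_v(\chi_v)$ as $\chi_v$ ranges over the finite group $\Xset(\Q_v)$ by computing $\N J(L_v)/2J(\Q_v)$ via Proposition \ref{localintersection}, and verify the resulting identity $\prod_{v \in \Sigma}\omega_v(\chi_v) = 1$. For (a), the explicit Galois action on the basis $e_i = [(\alpha_i,0)-\infty]$ of $J[2]$ (the two transpositions $(\alpha_2\,\alpha_3)$ and $(\alpha_4\,\alpha_5)$) shows $J(\Q)[2]$ has dimension $2$, and a concrete $2$-descent over $\Q$ (possible by hand using the three factors of $h$, or by machine) pins down $d_2(\Sel_2(J/\Q))$ and confirms it is even.

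The main obstacle will be the $2$-adic place: at $v=2$ the group $\Q_2^\times/(\Q_2^\times)^2 \cong (\Z/2)^3$ offers seven nontrivial characters, and identifying $\alpha_2(\chi_2) \subset H^1(\Q_2,J[2])$ requires a careful $2$-adic descent for $J$ and each of its twists $J^{\chi_2}$. The primes $v \in \{3,5,7,13\}$ are easier because the residue characteristic is odd and Lemma \ref{ram} together with Proposition \ref{localintersection} can be applied after explicit analysis of the special fibre; the Galois structure of $J[2]$ above is already enough to pin down $\alpha_v(1_v)$ via Lemma \ref{hardtofindref}. Once these per-place tables are assembled, the identity $\prod_{v \in \Sigma}\omega_v(\chi_v)=1$ is a finite combinatorial verification, and together with (a) this closes the proof.
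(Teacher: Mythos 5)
Your strategy is legitimate in outline --- Proposition \ref{congruence} does apply here (its proof only needs $n$ odd and the standing hypotheses on $\Sigma$, not $\Gal(f)\cong S_n$ or $A_n$), and constancy of parity is indeed equivalent to $\prod_{v\in\Sigma}\omega_v(\chi_v)=1$ for all global $\chi$. But as written the proposal has a genuine gap: the two assertions that carry all the content, namely (a) that $d_2(\Sel_2(J/\Q))$ is even and (b) that the product formula holds identically, are deferred to computations you do not perform (``a concrete $2$-descent \dots pins down \dots and confirms it is even''; ``tabulate $h_v(\chi_v)$ \dots and verify the resulting identity''). Nothing in the proposal gives any a priori reason these computations come out favorably --- for a generic quintic with $\Gal(f)\cong(\Z/2)^2$ they would not --- so the argument proves nothing until the $2$-adic descent and the per-place tables are actually produced. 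A further point to watch if you do carry this out: the ``finite combinatorial verification'' must range over the image of $\Xset(\Q)$ in $\prod_{v\in\Sigma}\Xset(\Q_v)$, not over the full product, and since $\Gal(h)\not\cong S_n, A_n$ you cannot quote Lemma \ref{kernelofthemaptopzero} to identify that image; the kernel $\A$ must be recomputed for this splitting field.

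The paper's proof is structurally different and avoids all local computation. The polynomial $h$ is constructed (via the gluing of Proposition 4 of the reference \cite{example}) so that $J$ is $\Q$-isogenous to $E_f\times E_f$ for the elliptic curve $E_f: y^2=x^3-273x+1672$; the same holds for every quadratic twist. Hence $\mathrm{corank}_{\Z_2}(J^\chi/\Q)=2\,\mathrm{corank}_{\Z_2}(E_f^\chi/\Q)$ is even for all $\chi$, and the Cassels--Tate pairing attached to the principal polarization coming from the rational theta divisor forces $d_2\bigl((\Sh/\Sh_{\mathrm{div}})[2]\bigr)$ to be even, so that $d_2(\Sel_2(J^\chi/\Q))\equiv d_2(J^\chi(\Q)[2])=2\pmod 2$ (Lemma \ref{lastlemma}). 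This explains \emph{why} the parity is constant, whereas your route would only verify it numerically; if you want to salvage the local approach, the efficient move is to import the isogeny $J\sim E_f\times E_f$ anyway, at which point you are better off using it globally as the paper does.
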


Let $E_f$ be an elliptic curve labelled 1440D1 in \cite{table}:
$$
y^2 = f(x) := x^3 -273x +1672.
$$
Then
$$
E_f[2] = \{\infty, (-19,0),(8,0),(11,0) \}.
$$
Define an isomorphism $\psi: E_f[2] \to E_f[2]$ by sending $(\alpha_i,0)$ to $(\beta_i,0)$, where
$$
\alpha_1 = -19, \alpha_2 = 8, \alpha_3 =11, \beta_1 = 8 , \beta_2 = 11, \text{ and }  \beta_3 = -19.
$$
Clearly, $\psi$ does not come from an isomorphism $E_f \to E_f$ since $E_f$ does not have complex multiplication (the $j$-invariant of $E_f$ is not an integer).

Proposition 4 in \cite{example} shows that the Jacobian of the curve defined by $y^2 = h(x)$ where
$$
h(x) = -(-810Ax^2 + 81B)(81Ax^2 -90B)(-90Ax^2 -810B)
$$
is isomorphic to the quotient of $E_f \times E_f$ by the graph of $\psi$. The constant $A$ and $B$ are as in Proposition 4 in \cite{example}, and one can see $A = 1990170 = -B$ by simple algebra. Then by a rational transformation of $y^2 = h(x)$ by
$$
x = \frac{3x'+1}{x'},  y = \frac{cy'}{x'^3},
$$
where $c = 2^2 \times 3^{14} \times 5^2 \times 7 \times 13$,
we get
$$
y'^2 = -273(6x' + 1)(91x'^2 + 54x' +9)(100x'^2 + 60x' + 1).
$$
By abuse of notation, let
$$
h(x) := -273(6x + 1)(91x^2 + 54x +9)(100x^2 + 60x + 1).
$$
Then the above observation shows that $J$ is isogenous to $E_f \times E_f$ over $\Q$.

\begin{defn}
Let $A$ be an abelian variety over a number field $K$. Define
$$
\Sel_n(A/K):= \{x \in H^1(K, A[n]):\mathrm{res}_v(x) \in \mathrm{Im}(i_v) \text{ for all places $v$}\},
$$
where $\mathrm{res}_v$ is the restriction map
$$\mathrm{res}_v: H^1(K, A[n]) \to H^1(K_v, A[n])$$
and $i_v$ is the Kummer map $i_v : A(K_v)/n A(K_v) \to H^1(K_v, A[n])$. If $p$ is a prime, we define $\Sel_{p^\infty}(A/K)$ to be the direct limit of the Selmer groups $\Sel_{p^k}(A/K)$.
\end{defn}

\begin{lem}
\label{lastlemma}
Suppose that $C_h$ and $E_f$ are as above. Then for any $\chi \in \Xset(\Q)$,
$$
d_2(\Sel_2(J^\chi/\Q)) \equiv d_2(J(\Q)[2]) \text{ }(\text{mod  } 2)
$$
\end{lem}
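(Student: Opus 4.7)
The plan is to combine the isogeny decomposition $J\sim_\Q E_f\times E_f$ established above with the standard Cassels-Tate parity formula. Since $E_f[2]$ is $\Q$-rational, the isomorphism $\psi$ is automatically $G_\Q$-equivariant, so its graph is $G_\Q$-stable and the quotient isogeny $E_f\times E_f\to J$ is defined over $\Q$. Twisting by $\chi$ yields a $\Q$-isogeny $E_f^\chi\times E_f^\chi\to J^\chi$. The $\Z_2$-corank of the $2^\infty$-Selmer group is invariant under $\Q$-isogeny of abelian varieties and additive on products, so
\[
\mathrm{corank}_{\Z_2}\,\Sel_{2^\infty}(J^\chi/\Q)\;=\;2\,\mathrm{corank}_{\Z_2}\,\Sel_{2^\infty}(E_f^\chi/\Q),
\]
which is even.

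Next I would invoke a Cassels-Tate parity formula for $J^\chi$. The canonical principal polarization of the Jacobian $J$ descends to $J^\chi$ (the twist affects only the $G_\Q$-action, not the $\overline\Q$-isomorphism class of the theta divisor of Remark \ref{5.6}), so the Cassels-Tate pairing on $\Sha(J^\chi/\Q)$ is alternating. Writing $\Sha(J^\chi/\Q)[2^\infty]$ as an extension of a finite group carrying a nondegenerate alternating $\Ftwo$-form by its maximal $2$-divisible subgroup gives
\[
d_2(\Sha(J^\chi/\Q)[2])\;\equiv\;\mathrm{corank}_{\Z_2}\,\Sha(J^\chi/\Q)[2^\infty]\pmod 2.
\]
Combining with the Kummer descent sequence
\[
0\to J^\chi(\Q)/2J^\chi(\Q)\to \Sel_2(J^\chi/\Q)\to \Sha(J^\chi/\Q)[2]\to 0
\]
and the identity $\mathrm{corank}_{\Z_2}\Sel_{2^\infty}(J^\chi/\Q)=\mathrm{rk}(J^\chi(\Q))+\mathrm{corank}_{\Z_2}\Sha(J^\chi/\Q)[2^\infty]$ produces
\[
d_2(\Sel_2(J^\chi/\Q))\;\equiv\;\mathrm{corank}_{\Z_2}\,\Sel_{2^\infty}(J^\chi/\Q)+d_2(J^\chi(\Q)[2])\pmod 2.
\]

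Finally, by Proposition \ref{canonicaliso} the $G_\Q$-modules $J[2]$ and $J^\chi[2]$ are canonically isomorphic, so $d_2(J^\chi(\Q)[2])=d_2(J(\Q)[2])$. Substituting this and the evenness from the first step into the parity formula gives $d_2(\Sel_2(J^\chi/\Q))\equiv d_2(J(\Q)[2])\pmod 2$, as required. The main obstacle is the Cassels-Tate parity identity for $d_2(\Sha[2])$ without any finiteness assumption on $\Sha$; once that is granted, the result is essentially formal from the $\Q$-isogeny $J\sim E_f\times E_f$ and the invariance of $J[2]$ under twisting.
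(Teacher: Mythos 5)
Your proof is correct and follows essentially the same route as the paper: isogeny-invariance of the $\Z_2$-corank via $J\sim_\Q E_f\times E_f$ to get even corank, the Poonen--Stoll alternating Cassels--Tate pairing coming from a $\Q$-rational theta divisor to control $d_2(\Sha[2])$ modulo the divisible part, and Proposition \ref{canonicaliso} to identify $d_2(J^\chi(\Q)[2])$ with $d_2(J(\Q)[2])$. The only cosmetic difference is that for the rationality of the theta divisor on $J^\chi$ it is cleanest to note that $J^\chi$ is itself the Jacobian of a hyperelliptic curve with a rational point at infinity, rather than arguing via descent of the polarization.
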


\begin{proof}
Since $J$ and $E_f \times E_f$ are isogenous over $\Q$, the induced map
$$
\Sel_{2^\infty}(J/\Q) \to \Sel_{2^\infty}((E_f \times E_f)/\Q)
$$
has finite kernel and cokernel. Hence
$$
\mathrm{corank}_{\Z_2}(J/\Q) = \mathrm{corank}_{\Z_2}((E_f \times E_f)/\Q),
$$
so $\mathrm{corank}_{\Z_2}(J/\Q)$ is even. In a similar way, one can see that $\mathrm{corank}_{\Z_2}(J^\chi/\Q)$ is even for all quadratic twists $J^\chi$.
We have the following two exact sequences:
$$
\xymatrix@R=5pt@C=15pt{
0 \ar[r] & J(\Q) \otimes \Q_2/\Z_2 \ar[r] & \Sel_{2^\infty}(J/\Q) \ar[r] & \Sh[2^\infty] \ar[r] & 0, \text{ and }    \\
0 \ar[r] &  J(\Q)/2J(\Q) \ar[r] & \Sel_2(J/\Q) \ar[r] & \Sh[2] \ar[r] & 0,
}
$$
where the group $\Sh$ is the Shafarevich-Tate group of $J/\Q$. From the above exact sequences, we see that
\begin{align*}
d_2(\Sel_2(J/\Q)) & = \mathrm{rk}(J(\Q)) + d_2(J(\Q)[2]) + d_2(\Sh_{\mathrm{div}}[2]) + d_2(\Sh/\Sh_{\mathrm{div}}[2]) \\
& = \mathrm{corank}_{\Z_2}(J/\Q) + d_2(\Sh/\Sh_{\mathrm{div}}[2]) + d_2(J(\Q)[2]) \\
& \equiv d_2(J(\Q)[2]) \text{ (mod } 2),
\end{align*}
where the last congruence holds by the following. Note that $C_h$ has a rational point $\infty$, so the ($K$-rational) theta divisor given by $j: C_h \to J$ sending $P$ to $[P - \infty]$ produces a principal polarization. See Section $A.8.2$ of \cite{silverman3} for more details. Then the congruence follows from the following two general facts.
\begin{enumerate}
\item
If $A$ is an abelian variety over a number field $K$ that has a principal polarization coming from a $K$-rational (Weil) divisor, then there is a paring
$$
\Sh_{A/K} \times \Sh_{A/K} \to \Q/\Z,
$$
that is alternating and nondegenerate after division by maximal divisible subgroup.
\item
If there is a finite abelian group $B$ with an alternating non-degenerate pairing
$$
B \times B \to \Q/\Z,
$$
then $d_2(B[2])$ is even.
\end{enumerate}
Similarly, one can see
$$
\dim_{\Ftwo}(\Sel_2(J^\chi/\Q)) \equiv \dim_{\Ftwo}(J^\chi(\Q)[2]) \text{ (mod } 2)
$$
for all quadratic twists $J^\chi$. Then the lemma follows from Proposition \ref{canonicaliso}.
 \end{proof}

\begin{proof}[Proof of Proposition 8.1]
It is easy to see $d_2(J(\Q)[2]) =2$ by Lemma \ref{cycle}. Then Lemma \ref{lastlemma} completes the proof.
\end{proof}

We show one more example in the following proposition.
\begin{prop}
\label{heng2}
Let $J(C_g)$ be the Jacobian of hyperelliptic curve $C_g$ given by
$$
y^2 = g(x) = (2x+1)(3x^2+4x+2)(3x^2+2x+1).
$$
Then $d_2(\Sel_2(J(C_g)^\chi/\Q)$ is even for all $\chi \in \Xset(K)$.
\end{prop}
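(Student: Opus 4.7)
My plan is to mirror the proof of Proposition~\ref{heng} via Lemma~\ref{lastlemma}. I would break the argument into three steps: produce an extra symmetry of $C_g$ (which forces $J(C_g)$ to decompose up to $\Q$-isogeny), count the rational $2$-torsion, and invoke the Cassels--Tate pairing on $\Sh/\Sh_{\mathrm{div}}$.

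The key algebraic observation is the identity $g(-x-1) = -g(x)$, verified by direct expansion: the substitution sends $2x+1 \mapsto -(2x+1)$ and swaps the two quadratic factors $3x^2+4x+2$ and $3x^2+2x+1$ (both of which have discriminant $-8$). This identity says that $(x,y) \mapsto (-x-1, y)$ is a $\Q$-isomorphism from $C_g$ to its quadratic twist by $-1$; equivalently, $(x,y) \mapsto (-x-1, iy)$ is an automorphism of order $4$ of $C_g$ defined over $\Q(i)$ whose square is the hyperelliptic involution. Following the explicit construction of \cite{example}---the same reference used to analyse $C_h$---this extra symmetry should exhibit an explicit $\Q$-isogeny $J(C_g) \sim A \times A$ for some $A/\Q$ (likely an elliptic curve or a quadratic twist of one), and hence $J(C_g)^\chi \sim A^\chi \times A^\chi$ for every quadratic character $\chi \in \Xset(\Q)$. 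In particular,
$$
\mathrm{corank}_{\Z_2}\Sel_{2^\infty}(J(C_g)^\chi/\Q) \;=\; 2\,\mathrm{corank}_{\Z_2}\Sel_{2^\infty}(A^\chi/\Q)
$$
is even. Separately, both quadratic factors of $g$ split over $\Q(\sqrt{-2})$, so $\Gal(g/\Q) \cong \Z/2\Z$ acts on the five roots of $g$ with exactly three orbits ($\{-1/2\}$ and the two complex-conjugate pairs), and Lemma~\ref{cycle} gives $d_2(J(C_g)(\Q)[2]) = 3-1 = 2$; by Proposition~\ref{canonicaliso} this value is preserved under every quadratic twist.

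With these in hand, the argument of Lemma~\ref{lastlemma} applies verbatim: $\deg g$ is odd, so the $\Q$-rational point $\infty \in C_g(\Q)$ gives a $\Q$-rational theta divisor and hence a principal polarization of $J(C_g)^\chi$ coming from a $\Q$-rational divisor. Consequently the Cassels--Tate pairing on $\Sh/\Sh_{\mathrm{div}}$ is alternating and non-degenerate, so $d_2(\Sh/\Sh_{\mathrm{div}}[2])$ is even. Combining the standard exact sequences in Lemma~\ref{lastlemma}, we write
$$
d_2(\Sel_2(J(C_g)^\chi/\Q)) = \mathrm{corank}_{\Z_2}\Sel_{2^\infty}(J(C_g)^\chi/\Q) + d_2(J(C_g)^\chi(\Q)[2]) + d_2(\Sh/\Sh_{\mathrm{div}}[2]),
$$
a sum of three even quantities, so the left side is even. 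The principal obstacle is the isogeny decomposition $J(C_g) \sim A \times A$, since the order-$4$ automorphism is only defined over $\Q(i)$. I would handle this either by a Galois-descent argument starting from the $\Q(i)$-decomposition provided by the extra endomorphism (which forces $\Z[i] \hookrightarrow \End_{\Q(i)}(J(C_g))$), or by running the Howe/Cardona--Quer-type gluing construction of \cite{example} directly for $g$ to pin down $A$ explicitly. The remaining pieces---orbit count, principal polarization, and Cassels pairing---are routine.
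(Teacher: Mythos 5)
Your overall strategy is the paper's: reduce to Lemma \ref{lastlemma}, which requires (a) a $\Q$-isogeny splitting of $J(C_g)$ (or of a quadratic twist of it) into a square of an elliptic curve, so that $\cork_{\Z_2}\Sel_{2^\infty}(J(C_g)^\chi/\Q)$ is even, and (b) $d_2(J(C_g)(\Q)[2])=2$. Your computation of (b) is correct and matches the paper: the splitting field of $g$ is $\Q(\sqrt{-2})$, the generator acts with three orbits on the roots, and Lemma \ref{cycle} gives $3-1=2$, preserved under twisting by Proposition \ref{canonicaliso}. The identity $g(-x-1)=-g(x)$ is also correct and is a genuine structural observation about this curve.

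The gap is step (a), which you flag but do not close, and your first proposed route to it would not work as stated. From the order-$4$ automorphism $(x,y)\mapsto(-x-1,iy)$ over $\Q(i)$ you get $\Z[i]\hookrightarrow\End_{\Q(i)}(J(C_g))$, but an abelian surface with an imaginary quadratic field in its endomorphism algebra need not split up to isogeny: simple abelian surfaces of that type exist, so ``Galois descent from the $\Z[i]$-action'' does not by itself force $J(C_g)\sim A\times A$. Your second route --- running the gluing construction of \cite{example} --- is exactly what the paper does, but in the opposite direction: the paper starts from the elliptic curve $E':y^2=x^3-x$, whose $2$-torsion abscissae are $0,\pm1$, takes the cyclic permutation $\psi$ of these roots (which does not come from an automorphism of $E'$), and applies Proposition 4 of \cite{example} to produce a curve $C_{g'}$ with $J(C_{g'})$ isogenous over $\Q$ to $E'\times E'$; one then checks that $C_{g'}$ is a quadratic twist of $C_g$. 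Hence every twist $J(C_g)^\chi$ is $\Q$-isogenous to $E\times E$ for a suitable quadratic twist $E$ of $E'$, and the corank is even. Without identifying $E'$ and carrying out (or citing the output of) this construction, the even-corank claim --- and with it the whole proof --- is unsupported. The remaining pieces of your argument (theta divisor from $\infty$, alternating Cassels--Tate pairing on $\Sh/\Sh_{\mathrm{div}}$, and the parity bookkeeping) do then run verbatim as in Lemma \ref{lastlemma}.
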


\begin{proof}
Let $E'$ be the elliptic curve $y^2 = x^3 - x$. Let
$$
\alpha_1 = 1, \alpha_2 = -1, \alpha_3 = 0, \beta_1 = -1, \beta_2 = 0, \text{ and }, \beta_3 = 1.
$$
Then one can proceed exactly in the same way as above to get a hyperelliptic curve $C_{g'}$ given by $y^2 = g'(x)$ for some $g' \in \Q[X]$, whose Jacobian is isogenous to $E' \times E'$ and is a quadratic twist of $J(C_g)$. It is easy to see $d_2(J(C_g)(\Q)[2]) = 2$ by Lemma \ref{cycle}. Then the rest follows from Lemma \ref{lastlemma}.
\end{proof}

\section*{Acknowledgements}
The author is very grateful to his advisor Karl Rubin for sharing his deep ideas and insight into the subject. Also the author wants to thank to Bjorn Poonen, and Heng Su for providing a simpler proof of Lemma \ref{poonen}, and for programming used in the discovery of the example in Proposition \ref{heng}. He extends his thanks to the referees for previous and current versions of this article for helpful suggestions, as well as to Chan-Ho Kim and Wan Lee for comments. He is also grateful to Dennis Eichhorn for proofreading.

\bibliographystyle{abbrv}
\bibliography{mjrefe}

\end{document}